  \crefname{theorem}{Theorem}{Theorems}
  \crefname{thm}{Theorem}{Theorems}
  \crefname{lemma}{Lemma}{Lemmas}
  \crefname{lem}{Lemma}{Lemmas}
  \crefname{remark}{Remark}{Remarks}
  \crefname{prop}{Proposition}{Propositions}
  \crefname{proposition}{Proposition}{Propositions}
\crefname{notation}{Notation}{Notations}
\crefname{claim}{Claim}{Claims}
  \crefname{defn}{Definition}{Definitions}
  \crefname{corollary}{Corollary}{Corollaries}
  \crefname{section}{Section}{Sections}
  \crefname{figure}{Figure}{Figures}
  \crefname{question}{Question}{Questions}
  \crefname{exercise}{Exercise}{Exercises}
    \crefname{assumption}{Assumption}{Assumptions}
\newtheorem{thm}{Theorem}[section]
\newtheorem{lemma}[thm]{Lemma}
\newtheorem{corollary}[thm]{Corollary}
\newtheorem{prop}[thm]{Proposition}
\newtheorem{proposition}[thm]{Proposition}
\newtheorem{question}[thm]{Question}
\numberwithin{equation}{section}
\theoremstyle{definition}
\newtheorem{remark}[thm]{Remark}
\def\cW{\mathcal{W}}
\def\cV{\mathcal{V}}
\def\cU{\mathcal{U}}
\def\cR{\mathcal{R}}
\def\cH{\mathcal{H}}
\def\cG{\mathcal{G}}
\def\cE{\mathcal{E}}
\def\cD{\mathcal{D}}
\def\cC{\mathcal{C}}
\def\cB{\mathcal{B}}
\def\cA{\mathcal{A}}
\def \ve {\varepsilon}
\def\P{\mathbb{P}}
\def\E{\mathbb{E}}
\def\R{\mathbb{R}}
\def\Z{\mathbb{Z}}
\def\N{\mathbb{N}}
\def\R{\mathbb{R}}
\def\1{\mathbf{1}}
\def  \p- {p\textunderscore}
\DeclareMathOperator{\Bern}{Bern}
\DeclareMathOperator{\w} {\mathsf{w}}
\DeclareMathOperator{\f}{\mathsf{f}}
\DeclareMathOperator {\bP} {\boldsymbol{ \mathsf{P}}}
\def \fiid {\textsf{fiid }}
\def \cGm {\cG^{\bullet}_{\mathsf m}}
\def\proj{\mathsf{proj}}
\date{}
\title{Uniform even subgraphs and graphical representations of Ising as factors of i.i.d.}
\author{Omer Angel \thanks{University of British Columbia. Research supported in part by NSERC. Email: angel@math.ubc.ca} \and Gourab Ray \thanks{University of Victoria. Research supported in part by NSERC 50311-57400. Email:gourabray@uvic.ca } \and Yinon Spinka\thanks{University of British Columbia. Research supported in part by NSERC. Email: yinon@math.ubc.ca}}
\begin{document}
\maketitle
\begin{abstract}
We prove that the Loop $O(1)$ model, a well-known graphical expansion of the Ising model, is a factor of i.i.d.\ on unimodular random rooted graphs under various conditions, including in the presence of a non-negative external field. As an application we show that the gradient of the free Ising model is a factor of i.i.d.\ on unimodular planar maps having a locally finite dual. The key idea is to develop an appropriate theory of local limits of uniform even subgraphs with various boundary conditions and prove that they can be sampled as a factor of i.i.d.\ Another key tool we prove and exploit is that the wired uniform spanning tree on a unimodular transient graph is a factor of i.i.d.\ This partially answers some questions posed by Hutchcroft \cite{hutchcroft2020continuity}.
\end{abstract}

\section{Introduction}\label{sec:introduction}
The Loop $O(1)$ model has received much attention in recent years, mostly due to its intimate relation with the Ising model and its FK-Ising representation.
The model has also been instrumental to much of the recent progress in understanding the Ising model (see e.g.\ \cite{duminil_graphical} for a brief overview of this topic).
Given a statistical physics model on an infinite graph, a natural question is whether it is a factor of i.i.d., or in other words, whether it can be represented as an automorphism equivariant function of some collection of i.i.d.\ random variables associated to the vertices of the graph.
The goal of this article is to prove that the Loop $O(1)$ model is indeed a factor of i.i.d.\ in several cases.
This partially answers some questions of Hutchcroft~\cite{hutchcroft2020continuity}, who exploited such properties to prove the continuity of the phase transition in the Ising model on nonamenable groups.
Along the way, we also investigate relevant questions about expressing Uniform even subgraphs and Uniform spanning forests as factors of i.i.d.

 Let $G = (V,E)$ be a locally finite graph. 
 We consider (combined edge and site) percolation configurations $\eta \in \{0,1\}^{E \cup V}$. An \textbf{even percolation configuration} is an $\eta$ such that $\partial \eta = \emptyset$, where
 \begin{equation}
 \partial \eta = \left\{v \in V: \eta(v) + \sum_{e \in E : v \in e} \eta(e) \text{ is odd}\right\}.\label{eq:boundary}
 \end{equation}
The \textbf{Loop $O(1)$ model} on a finite graph $G$ is a probability measure on even percolation configurations, or more generally, on percolation configurations which are even except on a given boundary set. Given parameters $x,y \in \R_+$ and a boundary set $B \subset V$, it is defined as
\begin{equation}
\bP^B_{G,x,y}(\eta) = \frac{1}{Z^B_{G,x,y}}x^{\#\{e \in E: \eta(e)=1\}}y^{\#\{v \in V:\eta(v)=1\}}\1_{\{\partial \eta \subset B \}} \1_{\{\eta_B\equiv1\}}; \qquad \eta \in \{0,1\}^{E \cup V},\label{eq:free_loop}
\end{equation}
where $Z^B_{G,x,y}$ is the partition function.
While the model is defined for all $x,y\in\R_+$, the parameter range $x,y\in[0,1]$ is most interesting due to the connection with the Ising model.
The parameter~$x$ is related to the temperature of the Ising model, and the parameter $y$ is related to the intensity of the external field in the Ising model.
When $y = 0$ and $B=\emptyset$, the Loop $O(1)$ model is supported on \emph{edge} percolation configurations in which the degree of every vertex is even. Such configurations are identified with \textbf{even (spanning) subgraphs} of $G$,
which play a central role in this article. When $B=\emptyset$, we sometimes drop it from the notation, and similarly when $y=0$.

%

One reason why the Loop $O(1)$ measures are hard to analyze is that they lack certain monotonicity properties~\cite{klausen2020monotonicity}, which are enjoyed by the Ising model as well as its FK-Ising representation.
These monotonicity properties can be exploited to obtain results about them as factors of i.i.d.\ \cite{adams1992folner,van1999existence,haggstrom2002coupling,harel2018finitary,OW73}.
The key fact used to study the Loop $O(1)$ model in this article is that it can be realized as a uniform even subgraph of the FK-Ising model. Indeed, for $x=1,y=0,B=\emptyset$, it is the uniform even subgraph of $G$ itself.

Suppose now that $G$ is an infinite, locally finite, connected graph, and let $G_n$ be an exhaustion of $G$, i.e.\ an increasing sequence of finite subgraphs with $\cup G_n = G$.
We consider two natural loop $O(1)$ measures on $G_n$. The first is simply the loop $O(1)$ measure on $G_n$ with ``free boundary'', namely, $\bP_{G_n,x,y}^\emptyset$. The second is the loop $O(1)$ measure with ``wired boundary'', defined as follows:
Let $G_n^{\w}$ be the graph obtained by gluing all the vertices of $G$ which are not in $G_n$ into a single vertex $\Delta$, and removing the resulting self loops at $\Delta$. Then $\bP_{G_n^{\w},x,y}^{\w}=\bP_{G_n^{\w},x,y}^{\{\Delta\}}$ the loop $O(1)$ measure on $G_n$ with ``wired boundary''.\footnote{It is convenient sometimes to introduce a ghost vertex $v^*$ and attach an edge $\{v,v^*\}$ if and only if $\eta(v) =1$ and $v \neq \Delta$, glue together $\Delta$ and $v^*$, and erase the self loop. Then in the above definition, the Loop $O(1)$ measure is supported on subgraphs of even degree in this enhanced graph.}

It is known \cite[Theorem 2.3]{aizenman2015random} that both $\bP_{G_n,x,y}$ and $\bP_{G^{\w}_n,x,y}^{\w}$ converge weakly to limiting percolation measures on $G$.
This is essentially a consequence of the convergence of Ising correlation functions.
The limit measures are called the \textbf{free Loop $O(1)$} and the \textbf{wired Loop $O(1)$} measures (with parameters $x$ and $y$) and are denoted by $\bP_{G,x,y}^{\f}$ and $\bP_{G,x,y}^{\w}$ respectively.
In \cref{sec:loop}, we provide an alternate proof of the existence of the limits in an explicit almost sure sense which does not rely on the convergence of Ising correlation functions. 

\medskip

A process on a (connected and infinite) transitive graph is said to be a \textbf{factor of i.i.d.}\ (fiid) if it can be written as a automorphism-equivariant measurable function of a collection of i.i.d.\ random variables attached to the vertices of $G$ (see \cref{sec:coding_def} for a detailed definition).
In this article we work with a generalization of this notion for processes on random rooted graphs.
A rooted graph is a pair $(G,\rho)$ consisting of a locally finite, connected graph $G$ and a vertex $\rho$ called the root.
The notion of a factor of i.i.d.\ in this setup generalizes to what we call a \textbf{graph factor of i.i.d.} The ideas involved with this definition are borrowed from \cite{holroyd2017finitary,timar2021nonamenable} (see \cref{sec:coding_def}).
This is a natural extension of the notion of a factor of i.i.d., where the equivariance to automorphisms of $G$ is replaced by equivariance to isomorphisms of marked rooted graphs:
Given a random rooted graph $(G, \rho)$ and a stochastic process $X=(X_v)_{v \in V(G)}$ on it, the triplet $(G, \rho, X)$ is a graph factor of i.i.d.\ if there is an i.i.d.\ collection $\Xi=(\Xi_v)_{v \in V(G)}$ and a measurable function $F$ defined on isomorphism classes of marked rooted graphs such that $X_v = F(G,v,\Xi)$ for each $v\in G$.
We remark that if $(G, \rho)$ is a deterministic transitive graph, then the notion of a graph factor of i.i.d.\ coincides with the usual notion of factor of i.i.d.\ (see \cref{lem:relation_factor}).
In this paper we need to work with the case where $G$ is a supercritical FK-Ising cluster in some larger graph $H$, which is not transitive even when $H$ is.

A common regularity assumption in this setting is that the random rooted graph is \textbf{unimodular}.
This roughly means that it obeys certain distributional symmetry properties (see \cref{sec:unimodular} for a definition).
Our main results require unimodularity. In particular, there are nonunimodular vertex-transitive graphs for which our results do not apply; see \cref{rmk:nonunimodular} where we point out the steps in the proof which break down. 
A typical non-random example of a unimodular graph is the Cayley graph of a finitely generated group (e.g., the free group with 2 generators).
Typical random examples are percolation clusters on Cayley graphs, and the Poisson-Voronoi tesselation or Delaunay triangulations (appropriately rooted).
Unimodular random graphs have also received much attention recently; we refer to the work of Aldous and Lyons \cite{AL_unimodular} which laid down the foundations of this topic. See also \cite{curiennotes} for an excellent exposition of this topic.

\medskip

We proceed to describe our main results.
We begin with the wired Loop $O(1)$ measure, for which the result is most complete.

\begin{thm}\label{thm:main_wired}
Let $(G, \rho)$ be a unimodular, random rooted graph with finite expected degree of $\rho$. Let $x,y \in [0,1] $ and given $(G, \rho)$, let $\eta$ be sampled from $\bP^{\w}_{G, x,y}$.
\begin{itemize}
\item If $(x,y) \neq (1,0)$ then $(G, \rho, \eta)$ is a graph factor of i.i.d.
\item If $(x,y)  = (1,0)$ then   $(G, \rho, \eta)$ is a graph factor of i.i.d. if and only if $(G, \rho)$ is almost surely not two ended.
\end{itemize}
\end{thm}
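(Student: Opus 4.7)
The strategy is to represent the wired Loop $O(1)$ configuration as the uniform even subgraph of an FK-Ising cluster on the (wired) enhanced graph (where a ghost vertex absorbing the external field handles the site component when $y>0$), and to build both layers as graph factors of i.i.d.\ separately. There are thus two tasks: (i) sample the wired FK-Ising configuration as a graph fiid, and (ii) given such a cluster, sample the uniform even subgraph on it as a further graph fiid. When $(x,y)=(1,0)$ the FK layer is deterministic, so only task (ii) applied to $G$ itself remains, which is where the two-ended dichotomy manifests.

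For task (i), with $(x,y)\neq(1,0)$ the wired FK-Ising measure is FKG, insertion-tolerant, and has effective parameter bounded away from the degenerate value; its infinite-volume limit on a unimodular graph admits a graph-fiid encoding via a Propp--Wilson / monotone coupling-from-the-past construction using i.i.d.\ vertex marks, adapted to the unimodular setting in the spirit of H\"aggstr\"om--Steif, van den Berg--Steif, and Harel--Spinka. The finite expected root degree yields a finite expected coding radius, and the condition $(x,y)\neq(1,0)$ keeps the effective FK parameter strictly below the threshold at which such monotone coding succeeds.

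For task (ii), given a unimodular transient component $C$, I invoke the paper's earlier result that the wired uniform spanning tree of $C$ is a graph fiid. Sample a WUST $T$ from i.i.d.\ marks, flip an independent fair coin $\xi_e$ for each non-tree edge $e$, and declare
\[
\eta \;=\; \sum_{e :\ \xi_e=1} C_e \pmod{2},
\]
where $C_e$ is the (finite) fundamental cycle of $e$ in $T\cup\{e\}$; on finite components use a UST sampled from i.i.d.\ marks by Wilson's algorithm. The non-two-ended hypothesis is used to ensure the WUST on $C$ is almost surely one-ended, so that each fixed edge $f$ lies on only finitely many $C_e$ and the sum above is a well-defined locally finite even subgraph. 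A direct check using the fundamental cycle basis confirms $\eta$ has the law of the wired uniform even subgraph.

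For the converse at $(x,y)=(1,0)$ with $G$ two-ended, the wired uniform even subgraph is tail-nontrivial: on $\Z$ it puts mass $\tfrac12$ on each of $\emptyset$ and $E$, and in general the parity across a narrow cut separating the two ends is a tail event of $\eta$ of probability $\tfrac12$. Any graph-fiid representation $\eta=F(G,\cdot,\Xi)$ would translate this into a tail event of the i.i.d.\ marks $\Xi$, which by Kolmogorov's zero-one law has probability $0$ or $1$ --- a contradiction. The step I expect to be hardest is task (ii) in the $\ge 3$-ended regime, where one must verify the fundamental-cycle sampler produces the wired (rather than free) uniform even subgraph.
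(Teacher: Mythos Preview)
Your overall architecture---sample wired FK-Ising as a graph fiid, then sample the wired uniform even subgraph of that configuration as a further graph fiid---is exactly the paper's. But there are genuine gaps in the second stage.

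\textbf{The fundamental cycles $C_e$ are not in general finite, and even when they are, finite cycles alone generate the free cycle space, not the wired one.} The WUSF on a transient cluster is a forest whose components are one-ended, but it need not be connected; an edge $e$ joining two distinct tree components of the forest gives a bi-infinite path $C_e$, not a finite cycle. When $y>0$ one also needs rays from the ghost vertex. The paper's Lemma on generating sets (the ``$\{C^F_e\}_{e\notin F}$'' construction) explicitly allows $C^F_e$ to be a finite cycle, a bi-infinite path, or a ray from $v^*$, and proves these generate $\cE^{\w}$; restricting to finite cycles would produce the free uniform even subgraph. So your sampler as written does not give the wired law---the very concern you flag at the end is real and has to be addressed up front by including the infinite elements.

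\textbf{Two cases are missing.} First, infinite FK clusters may be \emph{recurrent} (necessarily one-ended by indistinguishability and deletion tolerance when $x<1$); your invocation of the WUSF-as-fiid theorem needs transience, so the recurrent case has to be handled separately---the paper uses Tim\'ar's one-ended spanning tree construction for invariantly amenable components. Second, when $x=1$ and $y>0$ the FK edge-configuration is all of $G$, so if $(G,\rho)$ is two-ended with positive probability the relevant cluster \emph{is} two-ended and admits no one-ended spanning forest at all. The paper covers this by taking an end-faithful two-ended spanning tree (again via Tim\'ar, since two-ended implies invariantly amenable) and using that $y>0$ makes the tree end-dense through the ghost vertex, which yields a locally finite generating set of finite cycles for $\cE^{\w}$. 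Your outline does not reach this case; relatedly, the reason the FK clusters are never two-ended when $x<1$ is deletion tolerance plus indistinguishability, which you should state rather than treat ``non-two-ended'' as a standing hypothesis.

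For the converse at $(x,y)=(1,0)$, your parity-across-a-cut argument is the right idea; the paper formalises it via ergodicity of the i.i.d.-marked unimodular graph rather than Kolmogorov's law directly, which is what is needed since $(G,\rho)$ is itself random.
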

We refer to \cref{sec:deterministic_even} for the definition of an end of a graph which plays a crucial role in the analysis.
Let us illustrate with an example why $(G, \rho, \eta)$ is not a graph factor if $(x,y)=(1,0)$ and $(G, \rho)$ is two ended. First note that $\eta$ is the ``wired uniform even subgraph'' of the graph $(G, \rho)$ itself. A crucial idea used in this article is that in a wired uniform even subgraph, every cycle or bi-infinite path can be independently included (added modulo 2) according to a fair coin flip (we refer to \cref{sec:proj} for more details). A trivial example is given by $\Z$, where the wired uniform even subgraph is either empty or everything with equal probability. A more interesting example is the ladder graph $\Z \times \{0,1\}$, where it is impossible to decide on the presence of the bi-infinite paths corresponding to the two copies of $\Z$ as a factor of i.i.d. (in fact, the wired uniform even subgraph is not even ergodic).

We now turn to the free Loop $O(1)$ measure, for which our results are less comprehensive. We need several definitions before stating the result. A graph $G$ is (vertex) \textbf{amenable} if there exists a sequence of finite subsets $V_n \subset V$ such that
\[ \lim_{n \to \infty} \frac{|\partial V_n|}{|V_n|} =0, \]
where $\partial V_n$ is the set of vertices in $V_n$ with at least one neighbour not in $V_n$ and $|\cdot|$ denotes cardinality.
There is a related notion of amenability more suitable to random graphs, called \textbf{invariant amenability}, where the finite sets $V_n$ must be chosen as root clusters of invariant percolation processes.
We refer to \cref{sec:coding_def} for relevant definitions;
for now we quickly mention that by \cite[Theorems 5.1 and 5.3]{benjamini1999group}, a vertex-transitive unimodular graph is amenable if and only if it is invariantly amenable.

A \textbf{planar map} is a proper embedding of a graph into a simply connected open subset $S$ of $\R^2$ viewed up to orientation preserving homeomorphisms. A rooted planar map is a planar map with a fixed root vertex. Similar to unimodular random rooted graphs, there are also analogous notions of unimodular random rooted maps and  map factors of i.i.d.\ We refer to \cref{sec:unimodular} for more details on this topic. 

A \textbf{cycle} in $G$ is a sequence of vertices $(v_0,v_1, \ldots, v_n = v_0)$ with $v_i \sim v_{i+1}$ for all $0 \le i \le n-1$.  A \textbf{geodesic cycle} in $G$ is a finite cycle such that for any two vertices in the cycle, the shorter path between them along the cycle is a geodesic path in $G$.

We also refer to \cref{sec:FK_ising} for the definition of the FK-Ising model.

\begin{thm}\label{thm:main_free}
Let $x,y \in [0,1]$, let $(G, \rho)$ be a unimodular random rooted graph with finite expected degree of $\rho$ and given $(G, \rho)$, let $\eta$ be sampled from $\bP^{\f}_{G, x,y}$. Then $(G, \rho, \eta)$ is a graph factor of i.i.d.\ in each of the following cases:
\begin{enumerate}[a.]
\item $y>0$.
\item $(G,\rho)$ is almost surely invariantly amenable.
\item $\omega$ has only finitely many geodesic cycles through any vertex almost surely, where $\omega$ is a sample from the free FK-Ising meaure on $(G, \rho)$ with parameter $p = \frac{2x}{1+x}$ (and with no external field).
\end{enumerate}
Furthermore, if $(M,\rho)$ is a unimodular random rooted planar map with finite expected degree of $\rho$ and $\eta$ is sampled from $\bP^{\f}_{M, x,y}$, then $(M,\rho,\eta)$ is a map factor of i.i.d.
\end{thm}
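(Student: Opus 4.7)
The plan is a two-stage sampling. First, sample the free FK-Ising configuration $\omega$ on $(G,\rho)$ (on its ghost-enhancement when $y>0$) as a graph factor of i.i.d.; the FK-Ising satisfies strong FKG positive association and is realized by a monotone sandwiching construction driven by an i.i.d.\ source, in direct parallel with the FK-Ising step in the proof of \cref{thm:main_wired}. Second, given $\omega$, the free Loop $O(1)$ is a uniform even subgraph of $\omega$, and this decomposes over the connected components of $\omega$. Finite clusters are handled trivially by flipping i.i.d.\ fair coins on a local cycle-space basis, so the real work is to produce a fiid sample of a uniform even subgraph on each infinite cluster of $\omega$, and this is where the four cases diverge.

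For case (a), with $y>0$ and a ghost vertex $v^*$ attached to every vertex with weight $y$, the enhanced FK-Ising almost surely connects every infinite cluster to $v^*$; hence the free uniform even subgraph on such a cluster coincides with a wired uniform even subgraph anchored at $v^*$, and \cref{thm:main_wired} applied to this cluster yields the fiid representation. For case (b), invariant amenability of $(G,\rho)$ passes to each infinite cluster of $\omega$ (unimodular after re-rooting), which admits an invariant Folner exhaustion; on Folner sets one samples finite-volume uniform even subgraphs via independent fair coins on cycle-space bases, and amenability makes the boundary contribution vanish in the limit.

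For case (c), the hypothesis guarantees that geodesic cycles generate the cycle space and that only finitely many pass through each vertex; one enumerates a basis of geodesic cycles invariantly (breaking ties with the i.i.d.\ labels) and includes each basis element independently with probability $1/2$. For the planar map case, exploit Kramers--Wannier duality: the free Loop $O(1)$ on $M$ is identified with the gradient of an Ising model on the dual map $M^\dagger$, and this Ising measure is sampled via its wired FK-Ising representation on $M^\dagger$ together with a uniform $\pm 1$ spin per cluster. The wired FK-Ising on $M^\dagger$ is a map factor of i.i.d.\ by \cref{thm:main_wired}, so its gradient is as well, and this pulls back to a fiid representation of the free Loop $O(1)$ on $M$; local finiteness of $M^\dagger$ ensures the duality is well-defined.

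The main obstacle is the handling of infinite clusters. In case (c) one must check that the invariant enumeration of geodesic-cycle bases is well-defined and that the $1/2$-inclusion prescription produces the correct free Loop $O(1)$ limit rather than some other even-subgraph measure on the infinite cluster. In the planar case, care is required to set up the duality rigorously on an infinite unimodular map, in particular to match the free boundary condition on $M$ with the wired one on $M^\dagger$ and to verify that the resulting Ising gradient is measurable with respect to the i.i.d.\ source on $M$ (not merely on $M^\dagger$).
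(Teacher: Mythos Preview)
Your two-stage architecture (FK-Ising as a factor, then a uniform even subgraph of the FK clusters) matches the paper, and your treatments of cases (a) and (c) are close to what the paper does. In (c) you should note that ``geodesic cycles generate the free cycle space'' is not part of the hypothesis but must be proved; the paper does this with a one-line minimality argument (a shortest non-generated cycle is non-geodesic, hence splits into two shorter cycles). Also, there is no need to extract a basis: a locally finite \emph{generating} set already gives the correct law when each element is included independently with probability $1/2$ (\cref{prop:cycle_finite} and \cref{prop:limit_projection}), so the ``invariant enumeration of a basis'' step is unnecessary.

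Your handling of case (b) has a real gap. Sampling finite-volume uniform even subgraphs on F{\o}lner sets and saying ``the boundary contribution vanishes in the limit'' yields at best convergence in distribution; it does not by itself produce a measurable function of the i.i.d.\ source. What the paper does is different and concrete: invariant amenability passes to the root cluster of $\omega^{\f}$ (\cref{unimod_perc}\cref{inv_amenable}), and on an invariantly amenable unimodular graph one can build an \emph{end-faithful spanning tree} as a graph factor of i.i.d.\ (\cref{thm:timar}). From such a tree, \cref{lem:gen_set_free} gives a locally finite generating set for $\cE^{\f}$ consisting of the fundamental cycles $C^T_e$, and flipping an independent fair coin per generator is then manifestly a factor. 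The spanning-tree step is the missing idea in your sketch.

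Your planar argument is a genuinely different route, and it costs you generality. The duality sending the free Loop $O(1)$ on $M$ to the gradient of the plus Ising on $M^\dagger$ (and realizing the latter via wired FK-Ising on $M^\dagger$) requires $M^\dagger$ to be locally finite, which is \emph{not} assumed in \cref{thm:main_free}; it also requires transporting the i.i.d.\ source from $M^\dagger$ back to $M$ and matching expected-degree hypotheses on the dual. The paper bypasses all of this: it works directly on $\omega^{\f}\subset M$ and uses the collection of \emph{finite faces of $\omega^{\f}$} as a locally finite generating set for $\cE^{\f}(\omega^{\f})$, since every finite cycle in a planar map is the sum of the faces it encloses. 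Flipping a fair coin per finite face is a map factor of i.i.d.\ with no assumption on the dual. (Incidentally, wired FK-Ising being a factor is \cref{thm:HS}, not \cref{thm:main_wired}.)
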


Regarding the third condition, we do not know whether it is satisfied in general for $x<1$. However, for $x=1$, it seems that it is possible to construct Cayley graphs with infinitely many geodesic cycles through a vertex (suitable versions of the so-called \emph{Gromov monsters}). 
See \cref{sec:open} for further discussion.

\medskip

Given a stochastic process derived from a statistical physics model on general a graph, the question of whether it is a factor of i.i.d.\ has received much attention in recent times.
Although there has been a lot of progress in the case of amenable graphs \cite{Adams92,adams1992folner,haggstrom2000propp,spinka2018finitaryising,spinka2018finitarymrf,RS_19,sly2019stationary,ray2020proper},
the nonamenable setting remains a rather unexplored territory;
see Bowen \cite{bowen2010measure} for a general result in this direction and Lyons \cite{lyons2017factors} for a survey of results on a tree.
See also \cite{lyons2011perfect,backhausz2017spectral,gerencser2019mutual,backhausz2018correlation,csoka2015invariant,harangi2015independence,nam2020ising}  for other relevant results.
We elaborate a bit on this now and explain how our results have some consequences for the gradient of Ising model on planar graphs. 

Recall that the Ising model on a finite graph $G = (V,E)$ is a probability measure on spin configurations $\sigma\in\{\pm1\}^V$ defined by
\begin{equation}
  \mathbf I_{G, \beta} (\sigma) \propto \exp\left[\beta \sum_{x \sim y} \sigma_{x}\sigma_y\right] \propto \exp\left[-2\beta \sum_{x \sim y} \1_{\sigma_{x} \neq \sigma_y}\right].
\end{equation}
Here $\beta$ is the inverse temperature. We always assume $\beta \ge 0$ in this article which corresponds to the ferromagnetic regime.
Given an infinite, locally finite graph $G$, one can take an exhaustion $(G_n)_{n \ge 1}$ and take the free and wired limits of the Ising measure, similar to the loop $O(1)$ definition.
Denote the resulting measure on $G$ by $\mathbf I^{\f}_{G, \beta}$ for the free measure and $\mathbf I^{+}_{G, \beta}$ or $\mathbf I^{-}_{G, \beta}$ for the wired measure obtained as a limit with $+1$ or $-1$ on the boundary vertex respectively.
One can also consider \textbf{gradient} measures on $\{0,1\}^{E}$ by taking the pushforward of these measures under the map $\{\sigma_v\}_{v \in V} \mapsto \{\1_{\sigma_u \neq \sigma_v}\}_{\{u,v\} \in E}$.
Denote these measures by $\mathbf G^{\f}_{G, \beta}$ and $\mathbf G^{\w}_{G, \beta} = \mathbf G^{+}_{G, \beta} = \mathbf G^{-}_{G, \beta}$ (the two are equal, since the Ising measure is invariant under spin flip).

It is known that the wired Ising measure $\mathbf I^{\pm}_{G, \beta}$ is always a factor of i.i.d.\ (this was shown for $\Z^d$ in \cite{OW73}, for amenable groups in \cite{adams1992folner}, and finally for arbitrary graphs in \cite{haggstrom2002coupling}).
On an amenable graph, the free Ising measure $\mathbf I^{\f}_{G, \beta}$ is always equal to the mixture $\frac12(\mathbf I^{+}_{G, \beta} + \mathbf I^{-}_{G, \beta})$ of the wired measures~\cite{raoufi2020translation}, and in particular, it is a factor of iid if and only if there is a unique Gibbs measure (i.e., $\mathbf I^{+}_{G, \beta}=\mathbf I^{-}_{G, \beta}$). In contrast, the question of whether the free Ising measure $\mathbf I^{\f}_{G, \beta}$ is a factor of i.i.d.\ on a nonamenable graph is delicate even in the simplest case of regular trees.
On a $d$-regular tree, the uniqueness regime (where $\mathbf I^{+}_{G, \beta}$, $\mathbf I^{-}_{G, \beta}$, $\mathbf I^{\f}_{G, \beta}$ all coincide) is $\tanh(\beta) \le 1/(d-1)$, and in particular, the free Ising measure is a factor of i.i.d.\ in this regime.
On the other hand, if $\tanh(\beta) > 1/\sqrt{d-1}$ (the so-called reconstruction regime), then the free Ising measure is \emph{not} a factor of i.i.d.\ (see~\cite{lyons2017factors}).
Recently, Nam, Sly and Zhang \cite{nam2020ising} showed that for large enough $d$, the regime where it is a factor of i.i.d.\ extends beyond the uniqueness threshold, to $\tanh(\beta)<c(d)$ for some $c(d)>1/(d-1)$.
The exact location of the transition for being a factor of i.i.d.\ remains unknown.

On the other hand, it is easy to see that $\mathbf G^{\f}_{G, \beta}$, the gradient of the free Ising measure, is always a factor of i.i.d.\ on a tree (as it is itself actually i.i.d.).
This raises the natural question of whether the free gradient measure is always a factor of i.i.d.\ on a general graph.
In this context, the Loop $O(1)$ model is relevant: The finite dimensional distribution of Loop $O(1)$ can be written as the expectation of an explicit functional of the gradient of Ising (see \cite{aizenman2015random}). 

\medbreak

Suppose $M$ is a planar map with a locally finite dual (i.e., all faces have finite degree).
Let $M^{\dagger}$ denote the dual map of $M$, and for every edge $e \in E(M)$, let $e^{\dagger}$ denote the dual edge crossing it.
Given a spin configuration $\sigma$ on the vertices of $M$, we define a bond percolation on $M^{\dagger}$ by declaring an edge $e^{\dagger}$ open if and only if $\sigma_u \neq \sigma_v$ where $e  = \{u,v\}$ (or equivalently, if the gradient of $\sigma$ is 1 at $e$).
It is easy to see that the resulting percolation configuration on $M^{\dagger}$ is necessarily even, and in fact has the same law as the Loop $O(1)$ model with parameter $x = e^{-2\beta}$.
To be more precise, $\mathbf I^{\f}_{M, \beta}$ is pushed forward to $\mathbf P^{\w}_{M^{\dagger}, x}$ and $\mathbf I^{\pm}_{M, \beta}$ is pushed forward to $\mathbf P^{\f}_{M^{\dagger}, x}$.
Given a unimodular planar map $(M,\rho)$ with finite expected degree of the root, there is a natural way to define a unimodular dual map $(M^{\dagger}, \rho^{\dagger})$.
We refer to \cite[Section 2.5]{AHNR2} for more details of this construction.
Since the wired Loop $O(1)$ model on the dual map can be sampled as a factor of i.i.d.\ (by \cref{thm:main_wired}) and the mapping from the Loop $O(1)$ in the dual to the Ising gradient in the primal is a local deterministic map, we get the following corollary:

\begin{corollary}\label{thm:gradient}
  Let $(M,\rho)$ be a unimodular random rooted planar map with finite expected degree of $\rho$, and which has a locally finite dual almost surely.
  Let $\gamma$ be a sample from $\mathbf G^{\f}_{M, \beta}$ for some $\beta\ge 0$. Then $(M,\rho,\gamma)$ is a map factor of i.i.d.
\end{corollary}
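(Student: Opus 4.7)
The plan is to reduce the corollary to \cref{thm:main_wired} applied to the dual map, exploiting the high/low temperature duality for Ising recalled in the paragraphs preceding the corollary.

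First, I would dispose of the trivial case $\beta=0$: the Ising measure is then i.i.d.\ uniform $\pm 1$ on vertices, hence $\gamma$ is i.i.d.\ $\Bern(1/2)$ on edges, which is trivially a map factor of i.i.d. Assume henceforth $\beta>0$ and set $x=e^{-2\beta}\in(0,1)$, $y=0$. The pushforward of $\mathbf I^{\f}_{M,\beta}$ under the Ising-gradient map is, as recalled, exactly $\bP^{\w}_{M^{\dagger},x,0}$, so it suffices to realise a sample of the wired Loop $O(1)$ on the dual, together with the combinatorics of $M$, as a map factor of i.i.d.\ on $(M,\rho)$.

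Next, I would form the unimodular rooted dual $(M^{\dagger},\rho^{\dagger})$ via the construction of \cite[Section~2.5]{AHNR2}, which delivers a unimodular random rooted map with $\E[\deg(\rho^{\dagger})]<\infty$: one uses the edge-rooting trick to note that a uniformly chosen dart of $M$ is equivalently a uniformly chosen dart of $M^{\dagger}$, and finite expected primal root degree together with almost sure local finiteness of the dual pass through this correspondence. Since $(x,0)\ne(1,0)$, the first bullet of \cref{thm:main_wired} applies directly to $(M^{\dagger},\rho^{\dagger})$ and produces an equivariant measurable factor $F$ together with i.i.d.\ marks on the vertices of $M^{\dagger}$ (equivalently, the faces of $M$) whose output is the wired Loop $O(1)$ sample $\eta$ on $M^{\dagger}$.

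It then remains to perform a soft transfer of i.i.d.\ labels: to turn i.i.d.\ vertex marks on $M$ into i.i.d.\ face marks, equivariantly. Concretely, split each vertex mark $\Xi_v$ into a countable sequence $(\Xi^{(v,i)})_{i\ge 1}$ of independent $U[0,1]$ coordinates; for each face $f$ incident to $v$ allocate one coordinate $\Xi^{(v,f)}$ using the cyclic position of $f$ in the face-rotation around $v$; and for each face $f$ hash the tuple $(\Xi^{(v,f)})_{v\in \partial f}$ into a single $U[0,1]$ face mark by a canonical measurable injection. Each pair $(v,f)$ is consumed by exactly one face, so the resulting face marks are i.i.d.\ uniform, and feeding them into $F$ realises $\gamma$ as a map factor of i.i.d.\ on $(M,\rho)$. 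I expect the main obstacle to be purely notational---setting up the unimodular dual and the primal/dual label transfer compatibly with the map-factor formalism of \cref{sec:coding_def}---rather than any genuinely new probabilistic input, since all the content already resides in \cref{thm:main_wired}.
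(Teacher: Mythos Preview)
Your proposal is correct and follows exactly the route the paper sketches in the paragraph preceding the corollary: pass to the unimodular dual $(M^{\dagger},\rho^{\dagger})$, invoke \cref{thm:main_wired} there with $(x,y)=(e^{-2\beta},0)\neq(1,0)$, and pull the resulting wired Loop $O(1)$ configuration back to $M$ via the local edge-duality map. The label-transfer from primal vertices to dual vertices is the only thing the paper leaves implicit and you spell out; your scheme is essentially right, though ``cyclic position'' and ``canonical measurable injection'' hide a small wrinkle --- you must break the cyclic symmetry at each $v$ equivariantly (e.g.\ reserve one coordinate of each $\Xi_v$ for tie-breaking) and then use a symmetric combiner such as bitwise XOR of binary expansions so that the resulting face marks are genuinely i.i.d.\ Uniform$[0,1]$.
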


See also \cite{RS_19} for related results in $\Z^d$.

\medbreak

As mentioned earlier, a key step in the proofs of \cref{thm:main_wired,thm:main_free} is to obtain the uniform even subgraph of the FK-Ising clusters as a factor of i.i.d.
This will follow from a general result in \cref{sec:uni_even}.
One important step is to obtain a one-ended spanning tree or forest as a factor of i.i.d.
In the transient case, the candidate we choose is the wired uniform spanning forest (WUSF), which is the limit of uniform measures on wired spanning trees along an exhaustion of the graph (see, e.g., \cite{lyons2017probability}).
This is a well-known and studied object, which among other things, is known to satisfy the aforementioned one-endedness property~\cite{BLPS_USF,AL_unimodular,H15a}.
However, as far as we can tell, the fact that it is a factor of i.i.d.\ is not known, and so we prove the following:

\begin{thm}\label{thm:UST}
  Let $(G,\rho)$ be a random rooted graph that is almost surely transient.
  Then the wired uniform spanning forest on $G$ is a graph factor of i.i.d.
\end{thm}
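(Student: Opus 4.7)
The approach is Wilson's algorithm rooted at infinity, which on any transient graph samples the WUSF \cite{BLPS_USF}. To obtain a factor-of-i.i.d.\ representation, I would use the order-independent ``cycle-popping'' formulation of Propp and Wilson, which fits naturally with the equivariance requirement: the map from labels to output cannot depend on any choice of enumeration of the vertices.

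First, from each label $\Xi_v$ extract an i.i.d.\ infinite sequence $(N_v^k)_{k \ge 1}$ of neighbors of $v$, each chosen uniformly (or with the appropriate edge weights); this is manifestly a graph factor of i.i.d.\ Viewing the top of each stack as an outgoing arrow $v \to N_v^1$ produces a directed configuration $D$ on $G$. Whenever $D$ contains a finite directed cycle, one pops it by advancing the stack at each of its vertices. The Propp--Wilson theorem on finite graphs shows that this procedure is confluent: the set of popped cycles, and hence the final configuration, is independent of the order in which cycles are popped.

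To promote this to the transient infinite graph $G$, I would exhaust $G$ by finite subgraphs $G_n^{\w}$ with wired boundary and run cycle-popping on each, which terminates a.s.\ and produces a wired UST on $G_n^{\w}$. Combining the monotonicity of pops under enlarging the exhaustion with the convergence of wired USTs on $G_n^{\w}$ to the WUSF on $G$ (see \cite{BLPS_USF}) and with the transience assumption, one would argue that a.s.\ only finitely many pops occur at each vertex of $G$ and that the surviving stack tops form a sample of the WUSF. Because every step of this construction is equivariant under rooted-graph isomorphisms and depends only on $(G, \Xi)$, the WUSF edge set at any vertex is a measurable equivariant function of $(G, v, \Xi)$, as required.

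The principal obstacle is the infinite-volume stabilization step: verifying that cycle-popping on the transient graph terminates locally and produces the correct WUSF distribution. Concretely, one needs a coupling under which the finite-volume pops at vertices far from the wiring agree with their infinite-volume counterparts, so that Propp--Wilson on $G_n^{\w}$ and the BLPS convergence can be combined cleanly. An alternative route would be sequential Wilson's algorithm ordered by i.i.d.\ time labels, but that requires additional care to handle the absence of a ``first'' vertex in a countable set with continuous labels, so the cycle-popping route appears cleaner.
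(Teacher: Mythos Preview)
Your strategy---equip each vertex with an i.i.d.\ stack of arrows and use Propp--Wilson cycle-popping, relying on confluence to make the output a function of the stacks alone---is exactly the paper's. The difference lies in how the ``principal obstacle'' you identify is resolved.

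The paper does \emph{not} go through a finite-volume exhaustion to define the factor map. Instead it works directly on the infinite graph: it introduces a notion of \emph{legal order}, namely a countable sequence $\cW=(W_1,W_2,\dots)$ of (possibly infinite) vertex sequences along which cycles are popped, subject to (i) each vertex sees only finitely many pops and (ii) each vertex appears in infinitely many $W_i$. Existence of a legal order (Claim~1) is established by observing that Wilson's algorithm rooted at infinity, read through the stacks, furnishes one; this is where transience enters. Uniqueness of the output (Claim~2) is proved by a transfinite induction on \emph{colored} cycles, showing that any two legal orders pop the identical multiset of colored cycles. The factor map is then simply ``given the stacks, run any legal order''---well defined by Claim~2, hence root- and isomorphism-equivariant.

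Your exhaustion route can be made to work, but the sentence ``every step of this construction is equivariant and depends only on $(G,\Xi)$'' hides the very point that needs proof: an exhaustion is a non-equivariant choice, so you must show the limiting popped set is independent of it. That is precisely the infinite-graph confluence statement the paper isolates as Claim~2. The monotonicity you invoke (cycles popped in $G_n^{\w}$ are also popped in $G_{n+1}^{\w}$) is correct, since popped cycles never pass through the sink; but to conclude that the increasing union is locally finite and exhausts all poppable cycles, you end up re-proving Claims~1 and~2 in disguise. The paper's formulation via legal orders and colored-cycle induction is a cleaner packaging of the same content and avoids any reference to an exhaustion in the definition of the map itself.
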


Note that \cref{thm:UST} does not require the underlying graph to be unimodular.
The proof of \cref{thm:UST} relies on the cycle popping algorithm due to Wilson, modified to the setting of infinite graphs. In this setting, the induction step in the proof of Wilson is replaced by a transfinite induction step.
The case of a recurrent random rooted graph remains open (\cref{qn:recurrentUST}).

\medbreak
\paragraph{Organization.}
We start with some background material in \cref{sec:background} (definitions of codings in \cref{sec:coding_def}, of unimodular random graphs in \cref{sec:unimodular}, of FK-Ising in \cref{sec:FK_ising} and a description of the coupling between FK-Ising and Loop $O(1)$ in \cref{sec:loopO1}). In \cref{sec:uni} we build the theory for uniform even subgraphs, and in particular, in \cref{sec:proj} we describe a way to define and generate uniform even subgraphs of infinite graphs. In \cref{sec:loop}, we apply this theory to generate the Loop $O(1)$ model from FK-Ising.
In \cref{sec:USFfactor} we show how to generate the wired uniform spanning forests as a factor (\cref{thm:UST}; this is independent of everything else). Finally in \cref{sec:loop_factor} we combine everything together to prove \cref{thm:main_wired,thm:main_free}. We finish with some open questions and discussions in \cref{sec:open}.
\paragraph{Acknowledgement:} We thank Tom Hutchcroft several inspiring comments on an earlier draft of the paper.

\section{Background}\label{sec:background}

\subsection{Coding definitions}\label{sec:coding_def}
We present here precise definitions of factors and graph factors. Let us begin with the more standard notion of factors of i.i.d.\ on a fixed transitive graph.
Let $G = (V,E)$ be a vertex-transitive graph and let $\Gamma$ be the automorphism group of $G$ (though sometimes $\Gamma$ is taken to be a transitive subgroup, e.g., the group of translations when $G=\Z^d$).
Let $X = (X_t)_{t \in E\cup V }$ be a stochastic process taking values in $\R^{E \cup V}$.
We say that $X$ is a \emph{factor of i.i.d.}\ (\fiid for short) if there exists a collection of i.i.d.\ random variables $\Xi = (\xi_{v})_{v \in V}$ and an automorphism equivariant-function $\varphi: \R^{V} \mapsto \R^{E \cup V}$ such that $\varphi(\Xi) = X$ almost surely.
Here, automorphism-equivariant means that for any $\gamma \in \Gamma$, $$\gamma \varphi(\Xi) = \varphi(\gamma \Xi) \quad \text{ a.s.}$$
where for $\omega \in \R^{E \cup V}$,  $(\gamma \omega)_{t \in E \cup V} := (\omega_{\gamma^{-1}(t)})_{t \in E \cup V}$ is the diagonal action of $\Gamma$.
The same notation is used for processes $X$ defined only on the edges, or only on vertices of $G$.

As mentioned in the introduction, we will also need a notion of factors of i.i.d.\ on random rooted graphs, which we borrow from \cite{timar2021nonamenable,holroyd2017finitary}.
A \textbf{rooted graph} is a locally finite, connected graph $G$ where one vertex $\rho$ is specified as the root.
We will also consider \textbf{marked rooted graphs} which are triplets $(G, \rho, m)$ where $m:E\cup V \mapsto \Omega$ is a mapping to a Polish space $\Omega$.
Two (marked) rooted graphs are equivalent if there is a graph isomorphism between them that maps the root to the root and preserves the marks.

Let $\cG^\bullet$ (resp.\ $\cGm$) denote the space of equivalence classes of rooted graphs (resp.\ marked rooted graphs) endowed with the \textbf{local topology}:
informally, two marked rooted graphs are close if for some large $R$ the balls of radius $R$ are isomorphic as graphs, and the restriction of the marks to the balls are uniformly close
(see \cite{benjamini2011recurrence}).
We similarly use $\cG_{\mathsf m}^{\bullet \bullet}$ to denote the space of equivalence classes of doubly rooted marked graphs $(G, \rho_1,\rho_2, m)$.

A \textbf{graph factor} is a measurable map which maps a marked rooted graph to the \emph{same} rooted graph but with a new marking which does not depend on the location of the root.
Formally, a graph factor is a function $\varphi: \cGm \mapsto \cGm$ such that $\varphi((g,x,m)) = (g,x,m')$ for all $(g,x,m)$ and such that $\varphi$ is invariant under rerooting in the sense that if $\varphi((g,x,m)) = (g,x,m')$ then $\varphi((g,y,m)) = (g,y,m')$ for any other vertex $y$. 
Equivalently, a function $\varphi: \cGm \mapsto \cGm$ is a graph factor if and only if there exist measurable functions $F: \cGm \mapsto \Omega$ and $F': \cG_{\mathsf m}^{\bullet \bullet} \mapsto \Omega$ such that
\[ \varphi((g,\rho,m)) = (g,\rho,m') \qquad \text{and}\qquad \begin{array}{ll} m'(u) = F((g,u,m)) &\text{for all vertices }v, \\ m'(e) = F'((g,u,v,m)) &\text{for all edges }e=\{u,v\}, \end{array} \]
where $F'$ is invariant to transposing the two roots in the sense that $F'((g,x,y,m))=F'((g,y,x,m))$.

A sample $(G, \rho) $ from a probability measure on $\cG^\bullet$ is called a \textbf{random rooted graph}.
Similarly, a random rooted, marked graph $(G, \rho, M)$ is a sample from a probability measure on $\cGm$.
An \textbf{i.i.d.-marked random rooted graph} $(G, \rho , \Xi)$ is a random rooted graph $(G, \rho)$ with i.i.d.\ markings $\Xi = \{\xi_v\}_{v\in V(G)}$ on the vertices.\footnote{There is a canonical choice of a representative for each element in $\cG^\bullet$, allowing to make proper sense of this; see e.g.\ \cite[Section 2]{AL_unimodular}.}
We may assume the markings are distributed as Uniform$[0,1]$.

We call a random, rooted, marked graph $(G, \rho, M)$ a \textbf{graph factor of i.i.d.}\  if there exists a graph factor map $\varphi$ and an i.i.d.-marked random rooted graph  $(G, \rho , \Xi)$ such that 
\[ \varphi((G, \rho, \Xi))= (G, \rho, M) \text{ a.s.} \]
We sometimes say that $M$ is a graph factor of i.i.d.\ when the underlying random rooted graph $(G,\rho)$ is clear from context, admitting a slight abuse of terminology.
See also \cite[Definition 6]{timar2021nonamenable}.
 
It is a common alternative to attach the marks to the edges instead of vertices of the graph, and sometimes marks are attached to both vertices and edges.
Since we can create many i.i.d.\ Uniform$[0,1]$ random variables as a measurable function of a single Uniform$[0,1]$, it is easy to see that the different resulting notions of graph factors of i.i.d.\ are equivalent in all these cases.
Similarly, we may replace the Uniform$[0,1]$ variables by a countable sequence of independent variables at each vertex, which may not Uniform$[0,1]$ but rather any other distribution of our choosing.
 
The following lemma (which is perhaps well known but for which we have not found a reference) shows that the notion of graph factor of i.i.d.\ indeed generalizes the classical notion of factor of i.i.d.\ on a fixed transitive graph.

\begin{lemma}\label{lem:relation_factor}
  Let $G$ be a vertex transitive graph, let $\rho$ be a fixed vertex in it, and let $X$ be a stochastic process on it.
  Then $X$ is a factor of i.i.d.\ if and only if $(G, \rho, X)$ is a graph factor of i.i.d.
\end{lemma}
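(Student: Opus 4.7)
The plan is to prove the two directions separately and, in both cases, translate between an automorphism-equivariant function $\varphi:\Omega^V\to\R^{E\cup V}$ and a graph-factor map built from measurable $F:\cGm\to\R$ and $F':\cG_{\mathsf m}^{\bullet \bullet}\to\R$, exploiting the vertex transitivity of $G$ to identify the two pictures.

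For the easier direction ($\Leftarrow$), I start from a graph factor witness $(F,F')$ and i.i.d.\ marks $\Xi$ satisfying $X_v = F((G,v,\Xi))$ and $X_{\{u,v\}} = F'((G,u,v,\Xi))$, and define
\[ \tilde\varphi(\Xi)_v := F((G,v,\Xi)),\qquad \tilde\varphi(\Xi)_{\{u,v\}} := F'((G,u,v,\Xi)).\]
For any $\gamma\in\Gamma$, the map $\gamma:G\to G$ is an isomorphism of marked rooted graphs from $(G,\gamma^{-1}v,\Xi)$ onto $(G,v,\gamma\Xi)$ because $(\gamma\Xi)_{\gamma(u)} = \Xi_u$. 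Invariance of $F$ under isomorphism then gives $\tilde\varphi(\gamma\Xi)_v = F((G,\gamma^{-1}v,\Xi)) = \tilde\varphi(\Xi)_{\gamma^{-1}v} = (\gamma\tilde\varphi(\Xi))_v$, and the analogous computation handles edges via $F'$. Thus $\tilde\varphi$ exhibits $X$ as a classical factor of i.i.d.

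For the forward direction ($\Rightarrow$), I start from a classical equivariant factor $\varphi$ with $\varphi(\Xi) = X$ a.s. Given a marked rooted graph $(g,x,m)$ isomorphic to $(G,\rho,\cdot)$, I pick any graph isomorphism $\sigma:(G,\rho)\to(g,x)$ and set $F((g,x,m)) := \varphi(m\circ\sigma)_\rho$; on the complement of the isomorphism class of $(G,\rho)$ (a null event for the i.i.d.-marked random rooted graph) I set $F$ arbitrarily. Two choices of $\sigma$ differ by an element of the vertex stabilizer $\Gamma_\rho$, and equivariance of $\varphi$ combined with $(\gamma\xi)_\rho = \xi_\rho$ for $\gamma\in\Gamma_\rho$ shows that $F$ is well defined. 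Edges are handled analogously: fix an edge $\{\rho,\rho'\}\in E(G)$, and for each doubly-rooted marked graph $(g,x,y,m)$ with $(g,x,y)\cong (G,\rho,\rho')$, pick an isomorphism sending $(\rho,\rho')$ to $(x,y)$ and use $\varphi(m\circ\sigma)_{\{\rho,\rho'\}}$; the required invariance under swapping the two roots follows from the symmetry of the edge indexing.

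The main obstacle will be the well-definedness and measurability of $F$ in the forward direction. Classical factors of i.i.d.\ are only required to be equivariant almost surely, so before the stabilizer argument one must upgrade $\varphi$ to a genuinely equivariant version, either by modifying it on a null set or by replacing it with its $\Gamma_\rho$-symmetrization (averaging against the Haar measure of the compact profinite stabilizer), which does not change the distribution of $X$. For measurability, the canonical workaround is to use the almost-sure distinctness of continuous marks to produce a measurable selection of the isomorphism $\sigma$ from any $(g,x,m)$ with distinct marks; this suffices because the random rooted graph with i.i.d.\ continuous marks lives in this full-measure subset. Once this technicality is in place, the identity $\varphi'((G,\rho,\Xi)) = (G,\rho,X)$ is immediate from the construction.
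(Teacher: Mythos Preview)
Your proof follows essentially the same route as the paper's, with a minor difference in packaging: for the direction (\fiid $\Rightarrow$ graph \fiid) the paper defines the graph factor $\psi$ directly by $\psi((G,\rho,m))=(G,\rho,\varphi(m))$ and checks well-definedness on equivalence classes, whereas you go through the $(F,F')$ description and a chosen isomorphism $\sigma$. These are equivalent, and the stabilizer argument you give is exactly the well-definedness check the paper makes. You also flag the a.s.\ equivariance issue and propose a fix; the paper simply glosses over this.

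Two points need attention. First, in your ($\Leftarrow$) direction you silently assume there is a concrete i.i.d.\ process $\Xi$ on the fixed graph $G$ with $X_v=F((G,v,\Xi))$. But the hypothesis only gives an i.i.d.-marked random rooted graph, which is an \emph{equivalence class}; pulling this back to an honest i.i.d.\ process on $G$ is the ``subtle technical issue'' the paper isolates and resolves by taking a fresh i.i.d.\ $\Xi'$ on $G$ and observing that $[(G,\rho,\Xi')]$ has the right law. Second, in your edge construction for ($\Rightarrow$) you fix a single edge $\{\rho,\rho'\}$ and define $F'$ only on doubly-rooted graphs isomorphic to $(G,\rho,\rho')$; this is only sufficient if $G$ is edge-transitive, which vertex-transitivity does not guarantee. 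You need one representative per directed-edge orbit (or, more simply, adopt the paper's direct definition of $\psi$, which sidesteps the issue entirely). Both gaps are routine to fix.
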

\begin{proof}
  Let $X$ be a factor of some i.i.d.\ process $\Xi$ and let $\varphi$ be the factor map.
  Now define a graph factor map $\psi: \cG^{\bullet}_{\mathsf m} \mapsto \cG^{\bullet}_{\mathsf m}$ such that $\psi((G, \rho, m)) =  (G, \rho, \varphi(m))$ (and define $\psi((g,x, m))$ arbitrarily if $(g, x)$ is not equivalent to $(G, x)$).
  Note that $\psi$ is well defined.
  Indeed, suppose that $(G, \rho, m) $ and $(G, \rho', m')$ are equivalent as marked rooted graphs and let $f:V(G) \mapsto V(G)$ be a root and mark preserving isomorphism, so that $f(\rho) = \rho'$ and $f(m) = m'$.
  Then the equivariance of $\varphi$ implies that $\varphi(m') = \varphi(f(m)) = f(\varphi(m)).$ So $(G, \rho, \varphi(m))$ and $(G, \rho',\varphi(m'))$ are equivalent as marked rooted graphs with $f$ being an isomorphism between them.
  It is clear from the definition that $\psi$ does not depend on the location of the root.
  Thus, $(G, \rho, X) = \psi ((G, \rho, \Xi))$ and hence $(G, \rho, X)$ is a graph factor of i.i.d.\

  Conversely suppose that $(G, \rho, X)$ is a graph factor of i.i.d.\ so that $(G,\rho,X)=\varphi((G,\rho,\Xi))$ for some graph factor $\varphi$ with $(G, \rho, \Xi)$ being an i.i.d.\ marked random rooted graph.
  Let $F$ and $F'$ be as in the second definition of graph factor.
  Now define $\psi :\R^{V} \to \R^{E \cup V}$ by $\psi(m)_v = F((G,v,m))$ and $\psi(m)_{\{u,v\}} = F'((G,u,v,m))$.
  Then $\psi$ is equivariant, since for an automorphism $\gamma$ of $G$,
\[
\psi(\gamma m)_v = F((G,v,\gamma m)) = F((G,\gamma^{-1}(v),m)) = \psi(m)_{\gamma^{-1}(v)} = (\gamma \psi(m))_v ,\]
and similarly $\psi(\gamma m)_{\{u,v\}} = (\gamma \psi(m))_{\{u,v\}}$ for any edge $\{u,v\}$.
This implies in particular that $\psi(\Xi)$ is well defined and that it equals $X$.
This completes the proof, except for the subtle technical issue that the i.i.d.\ process $\Xi$ comes from the random marked rooted graph $(G,\rho,\Xi)$, and is thus, strictly speaking, not an i.i.d.\ process on the fixed graph $G$ (to conclude that $X$ is a factor of i.i.d., we need to express it as a function of an i.i.d.\ process on $G$). The problem is that since $(G,\rho,\Xi)$ is an equivalence class of marked rooted graphs, $\Xi$ defines a realization of an i.i.d.\ process on $G$, but only up to automorphisms of $G$ (e.g., if $G=\Z$ and $\rho=0$, then for almost every realization of $(G,\rho,\Xi)$ there are two different ways to choose marks on $\Z$, which are related to one another by a reflection around 0).
To bypass this, we let $\Xi'$ be i.i.d.\ Uniform $[0,1]$ on the vertices of $G$ and note that $[(G,\rho,\Xi')] \in \cG^{\bullet}_{\mathsf m}$ has the same distribution as $(G,\rho,\Xi)$.
Consequently, $\psi(\Xi')$ has the same distribution as $X$, so that we can put $\Xi'$ and $X$ in the same probability space so that $\psi(\Xi') = X$ almost surely.
This shows that $X$ is a factor of i.i.d.
\end{proof}


\subsection{Unimodular random graphs and maps} \label{sec:unimodular}
We continue this section with the definition of unimodular random rooted marked graphs and some of its consequences.
These are fairly standard, and the expert reader may wish to skip this section.
Recall the definition of marked (doubly) rooted graphs from \cref{sec:coding_def}. 
A random rooted marked graph $(G, \rho, M) \in \cG_{\mathsf m}^{\bullet}$ is \textbf{unimodular} if it satisfies the mass transport principle, i.e., for all measurable $f:\cG_{\mathsf m}^{\bullet\bullet} \mapsto [0, \infty)$,
\begin{equation*}
\E\left(\sum_{x \in V(G)} f((G,u,x,M))\right) = \E\left(\sum_{x \in V(G)} f((G,x,u,M)) \right).
\end{equation*}
Informally, we regard $f((G,x,y,M))$ as the mass sent from $x$ to $y$, and then unimodularity means that the expected total mass sent by $u$ equals the expected total mass received by $u$.
An unmarked rooted graph $(G,\rho)$ is unimodular if $(G,\rho,0)$ is unimodular (i.e. $G$ with a trivial marking).

We now introduce a notion of amenability which is relevant in the unimodular setting.
Let $(G, \rho)$ be a unimodular random rooted graph. We say that $\omega : E(G) \mapsto \{0,1\}$ is an \textbf{invariant percolation} on $(G, \rho)$ if $(G, \rho, \omega)$ is a unimodular random rooted marked graph. 
Given a finite set of vertices $W$ in $G$, we define
\begin{equation} i_{\sf E}(W) := \frac{|\partial_{\sf E} W|}{|W|}, \label{eq:edge_boundary}
\end{equation}
where $\partial_{\sf E} W$ denotes the set of edges in $G$ with exactly one endpoint in $W$ (and the other endpoint outside $W$).
Let $\omega_{\rho}$ denote the connected component of $\rho$ in the graph $(V,\omega)$. Define
\begin{equation}
\iota((G,\rho)) :=\inf\left\{ \E(i_{\sf E}(V(\omega_\rho))): \substack{\text{\normalsize $\omega$ is an invariant percolation on $(G, \rho)$ such that}\\\text{\normalsize every connected component of $\omega$ is a.s. finite}} \right\}.\label{eq:inv_edge_boundary}
\end{equation}
If $\iota((G,\rho))=0$ then we say that $(G, \rho)$ is \textbf{invariantly amenable.}. 


We now record some properties of unimodular random rooted graphs which we need in this article.
For this we need some additional definitions.
For a percolation configuration $\omega \in \{0,1\}^{E \cup V}$ and an edge or vertex $t \in E \cup V$, define $\omega_t$ to be the same as $\omega$ but with value 0 at $t$.
Similarly define $\omega^t$ to be the same $\omega$ but with value 1 at $t$.
For $\cA \subset \{0,1\}^{E \cup V}$, we also define $\cA_t := \{(g,\rho, \omega_{t}): (g,\rho, \omega) \in \cA) \}$, and $\cA^t$ similarly.
We also allow $t$ to be a measurable function of $(g,\rho,m)$, with the same definitions.
We say that a probability measure $\mu$ on $\cG^{\bullet}_{\mathsf m}$ is \textbf{deletion tolerant} (resp.\ \textbf{insertion tolerant}) if for any event $\cA \subset \cG^{\bullet}_{\mathsf m}$ with $\mu(\cA)>0$ and any Borel measurable map $t:(G, \rho, m) \mapsto E(G) \cup V(G)$, we have that $\mu(\cA_t)>0$ (resp.\ $\mu(\cA^t) >0$).
We say that an event $\cA \subset \cG^{\bullet}_{\mathsf m} $ is \textbf{invariant to re-rooting} if for any $x,y$ we have $(g,x,m) \in \cA \iff (g,y,m) \in \cA$.
We say that $(G, \rho, m)$ is \textbf{ergodic} if any event $\cA$ which is invariant to re-rooting has probability either 0 or 1.
It is known \cite[Theorem 4.7]{AL_unimodular} that a unimodular random rooted graph is ergodic if and only if its law is \textbf{extremal}, that is, it cannot be expressed as a nontrivial convex combination of unimodular random rooted graphs.
Using Choquet theory, every unimodular random rooted graph can be written as a convex combination of its extremal components.

\begin{lemma}\label{unimod_properties}
Let $(G, \rho)$ be a unimodular random rooted graph with law $\mu$.
\begin{enumerate}[a.]
\item \label{mark_unimod} The random rooted graph $(G,\rho,\Xi)$ with i.i.d.\ marks is unimodular.
\item \label{factor_unimod}If $(G,\rho,m)$ is unimodular, then any graph factor of it is also unimodular.
\item \label{reroot} If $(G, \rho, m)$ is unimodular and $\cA$ is an event invariant to re-rooting with $\mu(\cA)>0$, then $(G,\rho,m)$ conditioned on $\cA$ is unimodular 
\item \label{root_see_all}If $(G,\rho,m)$ is unimodular and $\mu(\cA)=1$ for some measurable event in $\cG^{\bullet}_{\mathsf m}$, then almost surely $(G,v,m) \in \cA$ for all $v$. 
\end{enumerate}
\end{lemma}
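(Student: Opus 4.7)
The plan is to verify each of the four assertions by applying the mass transport principle (MTP) to a carefully chosen test function on $\cG_{\mathsf m}^{\bullet\bullet}$. The proofs are short once the right test function is identified, and the main care needed is in checking measurability and in handling the case where marks and events interact.

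For part (a), the key point is that, conditionally on $(G,\rho)$, the marks $\Xi$ are exchangeable with respect to relabelings of vertices (being i.i.d.), and in particular $\Xi$ is independent of the graph structure. Given any measurable $f:\cG_{\mathsf m}^{\bullet\bullet}\to[0,\infty)$, I would define $\tilde f((g,u,v)) := \E[f((g,u,v,\Xi))]$, where the expectation is over i.i.d.\ marks $\Xi$ on $V(g)$. By Fubini this is measurable on $\cG^{\bullet\bullet}$, and plugging $\tilde f$ into the MTP for $(G,\rho)$ yields the MTP for $(G,\rho,\Xi)$ applied to $f$. For part (b), given a graph factor $\varphi$ determined by functions $F,F'$ as in the second definition, I would define $f'((g,u,v,m)) := f((g,u,v,m'))$ where $m'(w) = F((g,w,m))$ and $m'(\{u,v\}) = F'((g,u,v,m))$. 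Because $\varphi$ does not depend on the location of the root, $m'$ depends only on the isomorphism class of $(g,m)$, so $f'$ is a well-defined measurable function on $\cG_{\mathsf m}^{\bullet\bullet}$; unimodularity of $(G,\rho,m)$ applied to $f'$ gives unimodularity of $(G,\rho,m')$.

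For part (c), let $\mu' = \mu(\cdot\mid\cA)$. Given $f$, I would apply the MTP under $\mu$ to the function $\tilde f((g,u,v,m)) := \1_\cA((g,u,m))\, f((g,u,v,m))$. The rerooting-invariance of $\cA$ allows the indicator on the right-hand side to be rewritten as $\1_\cA((g,v,m))$, so both sides become the corresponding MTP quantities under $\mu'$ after dividing by $\mu(\cA)>0$. For part (d), assume $\mu(\cA)=1$ and apply the MTP to $f((g,u,v,m)) := \1_\cA((g,u,m))\,\1_{\cA^c}((g,v,m))$. The left-hand side collapses to $\E\bigl[|\{v\in V(G) : (G,v,m)\notin\cA\}|\bigr]$ (since $\1_\cA((G,\rho,m))=1$ almost surely), while the right-hand side is zero because $\1_{\cA^c}((G,\rho,m))=0$ almost surely; thus the random count is $0$ almost surely.

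None of the steps looks like a real obstacle; the proofs are routine once the right test functions are chosen. The only genuine subtlety is in part (b), where one must justify that the image marking $m'$ descends to a measurable function on isomorphism classes of marked (doubly) rooted graphs, rather than depending on a specific representative; this is exactly the content of the equivalent reformulation of a graph factor via $F$ and $F'$, and once that reformulation is used the proof reduces to an application of the MTP.
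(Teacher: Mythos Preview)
Your proposal is correct and gives precisely the standard MTP arguments one would expect. The paper itself does not actually prove the lemma: it dismisses (a) and (b) as ``easy exercises in definitions which we leave to the reader'' and for (c) and (d) simply cites Curien's lecture notes; your choice of test functions (averaging over $\Xi$ for (a), pulling back through $F,F'$ for (b), multiplying by $\1_\cA$ for (c), and the indicator product $\1_\cA\cdot\1_{\cA^c}$ for (d)) is exactly how these exercises are meant to be done.
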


\begin{proof}
  The first two  items are easy exercises in definitions which we leave to the reader.
 For the third see also \cite[Exercise 15]{curiennotes}.
  The last is \cite[Proposition 11]{curiennotes}.
\end{proof}

\begin{lemma}\label{lem:ergodic}
  If a random rooted graph $(G,\rho)$ is unimodular, ergodic and almost surely infinite, then the i.i.d.\ marked random rooted graph $(G,\rho,\Xi)$ is ergodic.
  If a marked random rooted graph $(G,\rho,m)$ is ergodic, then so is any graph factor of it. 
\end{lemma}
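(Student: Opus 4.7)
My plan is to prove the two statements separately, starting with the easier second one.

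For the second statement, the argument is essentially a one-liner using the alternative characterization of graph factors from \cref{sec:coding_def}: if $\varphi$ maps $(G,\rho,m)$ to $(G,\rho,m')$, then $m'$ is built from $(G,m)$ via the root-independent measurable functions $F$ and $F'$, so $\varphi$ commutes with re-rooting. The preimage under $\varphi$ of any re-rooting invariant event $\cA'$ in $(G,\rho,m')$ is therefore a re-rooting invariant event in $(G,\rho,m)$, and ergodicity of $(G,\rho,m)$ transfers to $(G,\rho,m')$.

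For the first statement, I plan to proceed in two steps. First, given a re-rooting invariant event $\cA \subset \cGm$, I would let $g(G,\rho) := \mathbb{P}((G,\rho,\Xi) \in \cA \mid G,\rho)$ and observe that since $\Xi$ is i.i.d.\ and independent of $(G,\rho)$, and $\cA$ does not depend on $\rho$, $g$ is a re-rooting invariant function of $(G,\rho)$. Ergodicity of $(G,\rho)$, applied to the invariant events $\{g \le t\}$ for rational $t$, forces $g$ to equal a deterministic constant $c$ almost surely.

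The main obstacle is the second step: concluding $c \in \{0,1\}$, which is where the almost sure infiniteness of $G$ is used. Conditional on $G$, the event $\cA$ corresponds to an event in $\Omega^{V(G)}$ that is invariant under the diagonal Bernoulli action of $\mathrm{Aut}(G)$, since the isomorphism class $[(G,\rho,\Xi)]$ is unchanged under applying an automorphism to both the root and the marks, and $\cA$ is re-rooting invariant. On an infinite graph, such shift-invariant events for Bernoulli measure are trivial; this can be shown by approximating $\cA$ in $L^1$ by cylinder events supported on finite balls and translating by suitably chosen automorphisms (or, for realizations of $G$ with trivial automorphism group, by directly invoking the Kolmogorov $0$-$1$ law after showing that re-rooting invariance plus infiniteness forces tail-measurability of $\cA$ given $G$). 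This forces $c \in \{0,1\}$. The overall statement is part of the Aldous--Lyons framework \cite{AL_unimodular} and could alternatively be cited from there directly.
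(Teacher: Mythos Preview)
Your argument for the second statement is correct: graph factors commute with re-rooting, so preimages of re-rooting invariant events are re-rooting invariant, and ergodicity transfers.

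For the first statement, your step~1 (reducing to showing the constant $c$ lies in $\{0,1\}$) is fine. The gap is in step~2: neither of your two proposed routes actually works.

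Route~(a) relies on the claim that $\mathrm{Aut}(G)$-invariant events for the Bernoulli measure on $\Omega^{V(G)}$ are trivial. This fails whenever $\mathrm{Aut}(G)$ is trivial, and unimodular ergodic infinite random rooted graphs very often have trivial automorphism group almost surely (the uniform infinite planar triangulation is a standard example). In that case $\mathrm{Aut}(G)$-invariance imposes no constraint whatsoever on the event.

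Route~(b) asserts that for such $G$, re-rooting invariance forces tail-measurability of $\cA$ in $\Xi$ given $G$. This is not true: once you condition on a realization of $G$ with trivial automorphism group, a re-rooting invariant event is simply an arbitrary measurable function of the marking $\Xi$ (since the isomorphism class of the unrooted marked graph determines $\Xi$ uniquely in this case). There is no reason for it to be a tail event. What prevents a ``counterexample'' such as $\{\Xi_{v_0(G)}>1/2\}$ for a distinguished vertex $v_0(G)$ is not tail-measurability but rather the non-smoothness of the re-rooting equivalence relation: there is no Borel, root-independent selection of a vertex $v_0$. This is exactly where unimodularity and the ergodicity of $(G,\rho)$ must enter the argument in a more essential way than you use them.

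The paper's own proof simply defers to the reader, so there is nothing to compare against at the level of detail. The cleanest correct argument views the re-rooting relation on $(\cG^\bullet,\mu)$ as an ergodic countable probability-measure-preserving equivalence relation with infinite classes, and observes that its Bernoulli extension (which is precisely $(G,\rho,\Xi)$ with the re-rooting relation on $\cGm$) is again ergodic; this is the content implicit in the Aldous--Lyons framework you cite, and citing it directly is the right move here.
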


\begin{proof}
  This is a straightforward application of the definitions which we leave to the reader.
\end{proof}

\begin{lemma}\label{unimod_perc}
  Let $(G, \rho)$ be a unimodular random rooted graph and let $\omega$ be an invariant percolation on it. 
  \begin{enumerate}[a.]
  \item \label{tolerant_condition} If $\omega$ is insertion (or deletion) tolerant and $\cA$ is an event invariant to re-rooting and has positive probability, then $(G, \rho, \omega)$ conditioned on $\cA$ is insertion (or deletion) tolerant and unimodular.
\item\label{root_cluster} $(\omega_\rho, \rho)$ is unimodular. 

\item \label{inv_amenable}$\iota ((\omega_\rho, \rho)) \le \iota ((G, \rho))$. 

\item \label{rec_one_ended}If $\rho$ has finite expected degree and $\omega$ is both insertion and deletion tolerant, then almost surely either every infinite cluster of $\omega$ is recurrent and one ended, or every infinite cluster of $\omega$ is transient.
%
\end{enumerate}
\end{lemma}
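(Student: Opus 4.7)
The plan is to verify each part in turn, relying on the mass transport principle (MTP) and standard properties of unimodular random rooted graphs, with parts (b) and (c) feeding into the more delicate parts (a) and (d).

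For \textbf{part (a)}, unimodularity of the conditioned measure is immediate from Lemma~\ref{unimod_properties}(\ref{reroot}), since $\cA$ is rerooting-invariant. For preservation of insertion tolerance (deletion is analogous), given an event $\cB$ with positive conditional probability, $\cB \cap \cA$ has positive $\mu$-measure, so insertion tolerance of $\mu$ yields $\mu((\cB \cap \cA)^t) > 0$; using the rerooting-invariance of $\cA$ together with the local nature of the insertion at a measurable $t$, one checks that the modified event remains in $\cA$ up to null sets, giving $\mu(\cB^t \cap \cA) > 0$. For \textbf{part (b)}, one derives the MTP on $(\omega_\rho, \rho)$ from the MTP on $(G, \rho, \omega)$, which is unimodular by Lemma~\ref{unimod_properties}(\ref{mark_unimod}). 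Given a test function $f$ on doubly rooted marked graphs, define
\[ \tilde f((g, x, y, \omega)) := f((\omega_x, x, y, \omega|_{V(\omega_x)})) \, \mathbf{1}_{\{y \in V(\omega_x)\}}, \]
where $\omega_x$ is the $\omega$-cluster of $x$, and apply the MTP for $(G, \rho, \omega)$ to $\tilde f$.

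For \textbf{part (c)}, choose an invariant percolation $\eta$ on $(G, \rho)$ with a.s.\ finite components and $\E[i_{\sf E}^G(V(\eta_\rho))] \le \iota((G, \rho)) + \epsilon$. On $\omega_\rho$, define $\nu$ by including $e \in E(\omega_\rho)$ iff both endpoints of $e$ lie in the same $\eta$-cluster; the $\nu$-clusters are then finite as subsets of $\eta$-clusters. A short case analysis shows that any edge of $\omega_\rho$ from $\rho$ to $V(\omega_\rho) \setminus V(\nu_\rho)$ must have its other endpoint in $V(G) \setminus V(\eta_\rho)$, for otherwise both endpoints would share an $\eta$-cluster and the edge would force membership in $V(\nu_\rho)$. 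Next, use the MTP identity
\[ \E[i_{\sf E}^G(V(\eta_\rho))] = \E\big[\#\{e \in E(G) : \rho \in e,\ e \not\subseteq V(\eta_\rho)\}\big], \]
obtained by transporting ``external degree divided by cluster size'' from each vertex to members of its $\eta$-cluster, together with its analogue on $(\omega_\rho, \rho)$ (unimodular by part (b)). This translates the pointwise edge inclusion into $\E[i_{\sf E}^{\omega_\rho}(V(\nu_\rho))] \le \E[i_{\sf E}^G(V(\eta_\rho))]$, and letting $\epsilon \downarrow 0$ gives the claim.

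For \textbf{part (d)}, I would combine cluster indistinguishability (Lyons--Schramm, as extended to the unimodular setting in Aldous--Lyons) with a deletion-tolerance argument. Indistinguishability, which holds under insertion tolerance, unimodularity, and finite expected root degree, implies that every rerooting-invariant property of the infinite cluster of $\rho$ (on the event it is infinite) is almost surely constant across all infinite clusters. Since both transience and $k$-endedness are such properties, either all infinite clusters are transient or all are recurrent. In the recurrent case, infinite-endedness is ruled out (an infinitely-ended unimodular random rooted graph with finite expected degree is transient, via a standard MTP argument), while 2-endedness is excluded by deletion tolerance: a 2-ended cluster contains a ``bridge'' edge whose deletion splits it into two 1-ended infinite pieces, and deletion tolerance combined with indistinguishability applied to the modified configuration yields a contradiction. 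I expect the bridge-deletion step to be the main obstacle, as it requires care to execute in the unimodular random graph setting with only finite (rather than bounded) expected degree, but this adapts standard arguments from the Lyons--Peres book and \cite{AL_unimodular}.
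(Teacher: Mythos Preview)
Your approach to (b), (c), and (d) closely mirrors the paper's. For (c), the paper uses the same MTP identity $\E[i_{\sf E}(V(\tilde\omega_\rho))] = \E[\deg_{G\setminus\tilde\omega}(\rho)]$ and compares it with $\E[\deg_{\omega\setminus\tilde\omega}(\rho)]$; the one point you gloss over is that $\eta$ and $\omega$ must be coupled so that the pair is a unimodular marking of $(G,\rho)$ (the paper samples them conditionally independently given $(G,\rho)$), which is what guarantees your $\nu$ is an invariant percolation on $(\omega_\rho,\rho)$. For (d), the paper explicitly reduces to the ergodic case via (a) before invoking indistinguishability, and then uses that a recurrent unimodular random rooted graph of finite expected degree is invariantly amenable and hence has at most two ends; this rules out $3,4,\dots$ ends simultaneously with infinitely many, a case your sketch does not cover.

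There is, however, a genuine gap in your argument for (a). The claim that ``the modified event remains in $\cA$ up to null sets'' does not follow from rerooting invariance and the locality of the insertion: rerooting invariance says $(g,x,m)\in\cA \iff (g,y,m)\in\cA$ for all $x,y$, which asserts nothing about changing the marking $m$. In particular one cannot conclude $(\cB\cap\cA)^t \subseteq \cA$ this way (e.g.\ $\cA=\{\text{all edges closed}\}$ is rerooting invariant but not closed under insertion). The correct route, which the paper only gestures at by citing \cite[Lemma~6.8]{AL_unimodular}, passes through the ergodic decomposition: a rerooting-invariant event of positive probability is, up to null sets, a union of extremal components; one shows each extremal component of an insertion-tolerant unimodular law is itself insertion tolerant, and then observes that a mixture of insertion-tolerant measures is again insertion tolerant.
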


\begin{proof}
  The proof of \cref{tolerant_condition} is a straightforward application of the definitions, see for example \cite[Lemma 6.8]{AL_unimodular} for a stronger statement.
  The proof of \cref{root_cluster} is also an easy consequence of the definition.
  For \cref{inv_amenable}, consider any invariant percolation $\tilde \omega$ with a.s. finite clusters on $G$.
  Consider the mass transport $f(x,y)$ where $x$ sends a total mass of $\deg_{G\setminus\tilde\omega}(x)$, uniformly distributed to all vertices $y$ in the cluster of $x$ (and $0$ if $x,y$ are in distinct or infinite clusters).
  The mass transport principle for this $f$ yields
  \[
    \E(i_{\sf E}(V(\tilde \omega_\rho))) = \E(\deg_{G\setminus \tilde \omega} \rho).
  \]
  Furthermore, if given $(G, \rho)$, we sample $\omega$ and $\tilde \omega$ conditionally independently then $(G, \rho, (\omega, \tilde \omega))$ is unimodular and
  \[
    \E(\deg_{\omega \setminus \tilde \omega} (\rho)) \le  \E(\deg_{G \setminus \tilde \omega} (\rho)).
  \]
  Taking the infimum over percolations $\tilde\omega$ with finite components,
  it follows that $\iota ((\omega_\rho, \rho)) \le \iota ((G, \rho))$, completing the proof.

  For \cref{rec_one_ended}, using \cref{tolerant_condition} we can assume that $(G, \rho)$ is ergodic. Recall that insertion tolerant, ergodic, invariant percolation clusters are indistinguishable  \cite[Theorem 6.15]{AL_unimodular} in the sense that every invariant property is satisfied either by all infinite clusters a.s.\ or by none a.s. We apply this to two invariant properties: transience and the number of ends of the percolation clusters. We combine this with \cite[Theorem 8.9]{AL_unimodular} which states that every invariantly amenable unimodular graph with finite expected degree has at most two ends almost surely.  Using indistinguishability, \cref{root_cluster} and \cite[Proposition 8.7]{AL_unimodular}, we conclude that either every infinite cluster is recurrent with 2 ends a.s., or every infinite cluster is recurrent with one end a.s., or every infinite cluster is transient a.s.
Using deletion tolerance, it is easy to conclude that the first case cannot occur since if it did, the root cluster being infinite and two ended must have positive probability. But by deletion tolerance, we can decompose the root cluster into two one-ended components with a finite cost (we skip the details here) which leads to a contradiction. This concludes the proof.
\end{proof}

We now discuss briefly the notion of unimodular planar maps which is involved in final part of \cref{thm:main_free} and in \cref{thm:gradient}. 
Recall that an embedding of a graph onto a surface $S$ is \textbf{proper} if no two edges cross each other, every compact set $S$ intersects only finitely many edges and vertices, and every face (connected component of the complement of edges and vertices) is homeomorphic to a disc.
For our purposes, we assume $S$ is a simply connected subset in the plane $\R^2$. A \textbf{planar map} is a graph equipped with a proper embedding onto $S$.
We say two such embeddings onto surfaces $S$ and $S'$ are equivalent if one can be mapped to the other by an orientation preserving homeomorphism between $S$ and $S'$.
A rooted planar map comes with an assigned root vertex (sometimes the root is taken to be a directed edge but it will not be relevant for us).
Given a rooted planar map $(M, \rho)$ and its corresponding rooted graph $(G, \rho)$, there is a natural marking which assigns to each vertex $v$ a cyclic permutation $\sigma_v$ on the edges incident to $v$ giving the order they eminate from $v$.
It is well known the marked graph $(G,\rho,\sigma)$ determines the map uniquely~\cite{lando2004graphs}.
We say that $(M,\rho)$ is a unimodular planar map if $(G,\rho,\sigma)$ is a unimodular marked rooted graph.
We refer to \cite[Section 2]{AHNR2} for a comprehensive treatment of this topic.
One can also analogously define the notion of a \textbf{map factor of i.i.d.} where the statements concerning measurability with respect to rooted or bi-rooted graphs are replaced by measurability with respect to the space of rooted or bi-rooted maps.

\subsection{The FK-Ising model}\label{sec:FK_ising}

The Fortuin--Kasteleyn Ising model (a special case of the random cluster model) is a well-known percolation model in statistical physics with diverse uses and applications.
The FK-Ising model with parameters $p,p_h \in [0,1]$ (edge weight $p$ and external field $p_h$) on a finite graph $G = (V,E)$ is the probability measure $\phi^{}_{G,p,p_h}$ given by
\begin{equation}
  \phi^{}_{G,p,p_h}(\omega) \propto
  \left(\frac{p}{1-p}\right)^{\# \{e \in E:\omega(e)=1\}} \left(\frac{p_h}{1-p_h}\right)^{\#\{v \in V: \omega(v)=1\}}2^{k(\omega)}; \qquad \omega \in \{0,1\}^{E \cup V},
  \label{eq:RC}
\end{equation}
where $k(\omega)$ is the number of clusters (connected components) in the graph $(V,\{e \in E:\omega(e)=1\})$ that do not intersect $\{v \in V : \omega(v)=1\}$, or equivalently, if we connect all vertices with $\omega(v)=1$ to an external vertex $\Delta$ by an edge, then $k(\omega)+1$ is the number of clusters formed by these edges together with the edges $\{e \in E:\omega(e)=1\}$.

These measures enjoy a monotonicity property known as FKG (positive correlations of increasing events).
One consequence of this is the existence of weak limits of the finite-volume measures:
Let $G_n$ be any increasing sequence of finite subgraphs of $G$ which exhaust $G$.
Then the weak limit of $\phi^{}_{G_n,p,p_h}$ exists, and does not depend on the exhaustion.
The limit is called the \textbf{free FK-Ising measure}, and is denoted by $\phi^{\f}_{G,p,p_h}$.
Similarly, given $G_n$, we can identify all the vertices not in $G_n$ into a single vertex $\Delta$, and remove the self loops at $\Delta$.
Call the resulting graph $G_n^{\w}$ (which could have multiple edges) and let $\phi^{\w}_{G_n^{\w},p,p_h}$ be the measure $\phi_{G_n^{\w},p,p_h}(\cdot \mid \omega(\Delta)=1)$. 
The weak limit of  $\phi^{\w}_{G_n^{\w},p,p_h}$ also exists (again, this is a consequence of FKG), is called the \textbf{wired FK-Ising measure}, and is denoted by $\phi^{\w}_{G,p,p_h}$.
The FKG property extends to these two limiting measures, and was exploited crucially by H{\"a}ggstr{\"o}m--Jonasson--Lyons~\cite{haggstrom2002coupling} and Harel--Spinka~\cite{harel2018finitary} to prove versions of the following theorem.

\begin{thm}[\cite{haggstrom2002coupling,harel2018finitary}]\label{thm:HS}
Let $(G, \rho)$ be a random rooted graph, let $p,p_h \in [0,1]$ and given $(G, \rho)$, let $\omega^{\f} \sim \phi^{\f}_{G,p,p_h}$ and $\omega^{\w} \sim \phi^{\w}_{G,p,p_h}$. Then $(G, \rho, \omega^{\f})$ and $(G, \rho, \omega^{\w})$ are graph factors of i.i.d..
\end{thm}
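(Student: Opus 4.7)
The plan is to reduce the FK-Ising case to the Ising case via the Edwards--Sokal coupling, and to treat the Ising case through monotone Glauber dynamics, adapting the approach of H\"aggstr\"om--Jonasson--Lyons to the random rooted graph setting. The first step is to show that the extremal Ising measures $\mathbf I^{+}_{G,\beta}$ and $\mathbf I^{-}_{G,\beta}$, together with the free Ising $\mathbf I^{\f}_{G,\beta}$, are graph factors of i.i.d.\ on $(G,\rho)$. To that end, attach to each vertex an independent unit-rate Poisson clock and an i.i.d.\ sequence of $\mathrm{Uniform}[0,1]$ labels, both absorbed into the i.i.d.\ mark $\Xi_v$. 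Run continuous-time heat-bath dynamics: at each clock tick at $v$, resample $\sigma(v)$ from its local conditional distribution given the current spins at the neighbours of $v$ (which depends only on finitely many values), using the associated uniform. Starting from the constant $+1$ configuration, the FKG inequality implies that under the natural monotone coupling of heat-bath moves the spin at each vertex is non-increasing in time, and since it is $\{\pm 1\}$-valued it stabilises almost surely. The pointwise limit $\sigma^{+}_{\infty}$ is a graph factor of $\Xi$, and a comparison with the finite-volume $+$-boundary dynamics on an exhaustion identifies its law as $\mathbf I^{+}_{G,\beta}$. The $-$ case is symmetric, and the free case can be handled either as a local limit of graph factors on finite exhaustions with free boundary, or through known averaging identities on amenable pieces.

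With the Ising measures realised as graph factors of i.i.d., I would invoke the Edwards--Sokal coupling to obtain FK-Ising. Sample $\sigma$ as above, attach to each edge an independent $\mathrm{Bernoulli}(p)$ mark $\beta_e$ with $p = 1-e^{-2\beta}$, and to each vertex an independent $\mathrm{Bernoulli}(p_h)$ mark $\beta_v$; all of these are fresh components of $\Xi$. Define
\[
  \omega(e) = \beta_e \cdot \1_{\sigma(u) = \sigma(v)} \quad \text{for } e=\{u,v\}, \qquad \omega(v) = \beta_v \cdot \1_{\sigma(v) = +1}.
\]
The Edwards--Sokal theorem then yields $\omega \sim \phi^{\w}_{G,p,p_h}$ when $\sigma \sim \mathbf I^{+}_{G,\beta}$ (with the external field encoded by a ghost vertex held at spin $+1$), and $\omega \sim \phi^{\f}_{G,p,p_h}$ when $\sigma \sim \mathbf I^{\f}_{G,\beta}$. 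Since $\omega$ is a deterministic local function of $\sigma$ and of additional i.i.d.\ marks, and $\sigma$ is already a graph factor of i.i.d., $\omega$ is a graph factor of i.i.d.\ as well. Equivariance under rooted-graph isomorphism and measurability in the local topology are immediate from the construction.

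The main obstacle will be identifying the infinite-volume heat-bath limit with the correct Gibbs measure on an arbitrary locally finite graph. This is standard on amenable and on transitive graphs, but on a generic random rooted graph one must couple the infinite-volume dynamics with the dynamics on an exhausting sequence of finite subgraphs, use FKG monotonicity in the domain to sandwich the limit between finite-volume $+$ measures, and appeal to their known weak convergence to $\mathbf I^{+}_{G,\beta}$. A secondary difficulty is the free Ising, where no canonical extremal starting configuration exists; I would treat it by taking the local limit of the graph factors produced on finite free-boundary exhaustions and verifying that this limit is itself a graph factor of i.i.d.\ by a measurability argument in the local topology.
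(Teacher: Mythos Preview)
Your argument for the wired measure is essentially correct and parallels the H\"aggstr\"om--Jonasson--Lyons approach: monotone heat-bath dynamics from the all-$+$ configuration yields $\mathbf I^{+}_{G,\beta}$ as a graph factor of i.i.d., and the Edwards--Sokal map is local, so $\phi^{\w}_{G,p,p_h}$ follows.

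The free case, however, has a genuine gap. Your reduction requires that $\mathbf I^{\f}_{G,\beta}$ be a graph factor of i.i.d., and this is \emph{false} in general. As the paper itself recalls in the introduction, on a $d$-regular tree with $\tanh(\beta)>1/\sqrt{d-1}$ the free Ising measure is provably \emph{not} a factor of i.i.d.\ (this is the reconstruction regime; see \cite{lyons2017factors}). Neither of your proposed fixes works: a pointwise limit of graph factors along an exhaustion need not be a graph factor (there is no diagonal argument available, since the factor maps depend on the exhausting set and not just on the i.i.d.\ marks), and ``averaging identities on amenable pieces'' is unavailable precisely in the nonamenable regime where the obstruction occurs. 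So the detour through $\mathbf I^{\f}$ cannot succeed.

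The paper's route avoids Ising entirely: it runs monotone Glauber dynamics (coupling from the past) \emph{directly on the FK-Ising configuration}, exploiting that $\phi_{G,p,p_h}$ itself satisfies FKG and is stochastically monotone in the boundary condition. Starting from the all-$0$ configuration the dynamics increase monotonically to $\phi^{\f}_{G,p,p_h}$, and starting from all-$1$ they decrease to $\phi^{\w}_{G,p,p_h}$; in either case the limit at each edge/vertex stabilises and is a measurable, root-independent function of the i.i.d.\ clocks and uniforms. The point is that the free FK-Ising \emph{does} have a canonical extremal starting state (all closed), whereas the free Ising does not --- this is exactly what your ``secondary difficulty'' flags, and it is fatal for the Ising route but a non-issue for the direct FK route.
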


Specifically, when $(G, \rho)$ is a fixed graph and $p_h=0$, this was essentially shown (though not explicitly stated) in \cite{haggstrom2002coupling}.
When $(G,\rho)$ is a fixed quasi-transitive graph and for any $p_h$, this follows from the results in~\cite{harel2018finitary}.
The same argument works for random graphs, and we include a sketch for completeness.
Indeed, an inspection of the latter proof reveals that it relies only on monotonicity (it is based on Glauber dynamics and coupling from the past), and not on any particular properties of the underlying graph, and hence extends to the setting of random rooted graphs leading to a graph factor of i.i.d.

As mentioned earlier, the Loop $O(1)$ model lacks the FKG properties enjoyed by FK-Ising~\cite{klausen2020monotonicity}.
This is one of the main reasons that the techniques in \cite{haggstrom2002coupling,harel2018finitary} are not directly applicable for the Loop $O(1)$ model.
However, the intimate connection between the two models that we discuss next allows us to make use of the results for the FK-Ising model.


\subsection{Coupling the Loop $O(1)$ and FK-Ising models}
\label{sec:loopO1}

In this section we explain a useful coupling between the Loop $O(1)$ model and the FK-Ising model.
This first appeared in recent work of Aizenman, Duminil-Copin, Tassion and Warzel \cite[Theorem 3.2]{aizenman2019emergent}.
The coupling illustrates that by taking a union of a Loop $O(1)$ model with an i.i.d.\ Bernoulli edge percolation, with carefuly set parameters, we can obtain the FK-Ising model.
In particular, the Loop $O(1)$ model can be realized as  a subset of the FK-Ising model. 

Let $\phi^B_{G,p,p_h}$ denote the law of $ \omega \sim \phi_{G,p,p_h}$ conditioned on $\omega_B \equiv 1$.

\begin{proposition}\label{prop:coupling}
  Let $G=(V,E)$ be a finite graph and let $B \subset V$. Let $x,y \in [0,1]$ and let $\eta$ be sampled from $\bP^B_{G,x,y}$.
  Let $X$ be an independent Bernoulli percolation process on $E \cup V$ with success parameter $x$ on $E$ and $y$ on $V$.
  Define
  \[\eta'(t) = \eta(t) \vee  X(t); \qquad t\in E \cup V. \]
  Then $\eta'$ has law $\phi^B_{G,p,p_h}$ where
  \[ p = \frac{2x}{1+x} \qquad\text{and}\qquad p_h = \frac{2y}{1+y}.
  \]
  Furthermore, given $\eta'$, the conditional law of $\eta$ is uniform on the set of all percolation configurations $\omega\le \eta'$  such that $\partial \omega \subset B$. \end{proposition}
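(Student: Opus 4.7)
The plan is to compute the joint law of $(\eta, \eta')$ explicitly, use the fact that it depends on $\eta$ only through $\eta'$ to immediately deduce the uniformity statement, and then separately count to match the marginal of $\eta'$ with $\phi^B_{G,p,p_h}$. Writing $|\xi|_E := \#\{e : \xi(e) = 1\}$ and $|\xi|_V := \#\{v : \xi(v) = 1\}$, for each $\eta_0$ in
\[
\cN(\omega) := \{\eta_0 \le \omega : \partial \eta_0 \subset B,\ (\eta_0)_B \equiv 1\},
\]
independence of $\eta$ and $X$ together with \eqref{eq:free_loop} gives
\[
\P(\eta = \eta_0,\ \eta \vee X = \omega) = \tfrac{1}{Z^B_{G,x,y}}\, (1-x)^{|E|-|\omega|_E} x^{|\omega|_E} (1-y)^{|V|-|\omega|_V} y^{|\omega|_V},
\]
after the $\eta_0$-dependent factors $x^{|\eta_0|_E} y^{|\eta_0|_V}$ from the loop $O(1)$ weight cancel with $x^{|\omega|_E - |\eta_0|_E} y^{|\omega|_V - |\eta_0|_V}$ from the $X$ probability (using $X(t) = 0$ off $\omega$, $X(t) = 1$ on $\omega \setminus \eta_0$, and $X(t)$ free on $\eta_0$). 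This cancellation is exactly the second claim: given $\eta' = \omega$, the law of $\eta$ is uniform on $\cN(\omega)$.

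Summing over $\eta_0 \in \cN(\omega)$ and letting $N(\omega) := |\cN(\omega)|$ gives
\[
\P(\eta' = \omega) \propto N(\omega)\cdot (x/(1-x))^{|\omega|_E} (y/(1-y))^{|\omega|_V}.
\]
For the specified $p, p_h$, one checks $x/(1-x) = \tfrac12 \cdot p/(1-p)$ and $y/(1-y) = \tfrac12 \cdot p_h/(1-p_h)$, so comparing with \eqref{eq:RC}, matching the marginal of $\eta'$ with $\phi^B_{G,p,p_h}$ reduces to proving
\[
N(\omega) = C(G,B)\cdot 2^{|\omega|_E + |\omega|_V + k(\omega)}\,\1_{\omega_B \equiv 1}
\]
for some constant $C(G,B)$ independent of $\omega$.

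The main step is this count, which I would perform via the $\mathbb{F}_2$-cycle space. Augment $G$ by a ghost vertex $\Delta$ adjacent to every $v \in V$ to form $\hat G$, and identify $\eta(v) = 1$ with the edge $\{v, \Delta\}$ being present. The condition $\partial \eta_0 \subset B$ becomes degree-evenness at every $v \in V \setminus B$, while $(\eta_0)_B \equiv 1$ forces the edges $\{v, \Delta\}$ for $v \in B$; these force $\omega_B \equiv 1$, else $N(\omega) = 0$. Contract $B \cup \{\Delta\}$ in $\hat G|_\omega$ to a single vertex $\star$ and remove the resulting self-loops to form a graph $\tilde G$. The self-loops removed consist of the forced edges $\{v,\Delta\}$ for $v \in B$ plus the $a := \#\{e \in E : \omega(e) = 1,\ e \subseteq B\}$ edges of $\omega$ with both endpoints in $B$; the latter are free in $\eta_0$, contributing a factor $2^a$. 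The remaining data of $\eta_0$ is a subgraph of $\tilde G$ even at every vertex other than $\star$; by parity evenness at $\star$ is automatic, so this count equals the cycle-space dimension $2^{|E(\tilde G)| - |V(\tilde G)| + k(\tilde G)}$. Direct counts give $|E(\tilde G)| = |\omega|_E + |\omega|_V - a - |B|$ and $|V(\tilde G)| = |V| - |B| + 1$, and the main bookkeeping step is the identity $k(\tilde G) = k(\omega) + 1$; this follows because $\omega_B \equiv 1$ places every $v \in B$ into the single component of $\hat G|_\omega$ containing $\Delta$, so contracting $B \cup \{\Delta\}$ collapses exactly one component to one vertex while removing self-loops does not affect connectivity. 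Combining, $N(\omega) = 2^{|\omega|_E + |\omega|_V - |V| + k(\omega)} \1_{\omega_B \equiv 1}$, and absorbing $2^{-|V|}$ into the normalizing constant completes the identification.
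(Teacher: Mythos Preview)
Your proof is correct and follows essentially the same approach as the paper: compute the joint law of $(\eta,\eta')$, observe that the $\eta$-dependent factors cancel to give uniformity, and then count $N(\omega)$ via a ghost-vertex construction and the $\mathbb{F}_2$-cycle-space formula $2^{|E|-|V|+k}$. The only cosmetic difference is that the paper connects its ghost $v^*$ to $V\setminus B$ and then identifies $B$ with $v^*$ (keeping self-loops and letting the even-subgraph formula absorb them), whereas you connect $\Delta$ to all of $V$, contract $B\cup\{\Delta\}$, and remove the resulting self-loops, accounting for the $a$ intra-$B$ edges by an explicit factor $2^a$; the resulting count $N(\omega)=2^{|\omega|_E+|\omega|_V-|V|+k(\omega)}\1_{\omega_B\equiv 1}$ is identical.
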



Some version of this is present in \cite[Theorem 3.2]{aizenman2019emergent} and an earlier version can be found in  \cite[Theorem 3.1]{grimmett2007random}, both in the special case $p_h=0$.
Since the proof is short and crucial for this article and we could not find a reference which involves the external field as well, we provide a proof for completeness.

\begin{proof}
 
  To simplify formulas, for $t \in E \cup V$, we write
  \[
    x_t = \begin{cases}
      x = \tanh(\beta)& \text{if $t \in E$} \\
      y =\tanh(h) & \text{if $t \in V$}
    \end{cases}.
  \]
  Recall from~\eqref{eq:free_loop} that
  \begin{equation}
    \P(\eta=\omega) = \frac{1}{Z^B_{G,x,y}} \prod_{t \in E \cup V} \big( x_t \1_{\{\omega(t)=1\}} + \1_{\{\omega(t)=0\}}\big) \1_{\{\partial \omega \subset B, \omega_B \equiv 1\}}.
  \end{equation}
  From this we see that the joint law of $(\eta,\eta')$ can be written as 
  \begin{equation}
    \P(\eta=\omega, \eta' = \omega') = \frac{1}{Z^B_{G,x,y}} \prod_{t \in E \cup V} \big( x_t \1_{\{\omega'(t)=1\}}+(1-x_t)\1_{\{\omega'(t) =0\}}\big) \1_{\{\omega \le \omega', \partial \omega \subset B, \omega_B \equiv 1\}}.
    \label{eq:coupling1}
  \end{equation}
  Since the term in the parenthesis does not depend on $\omega$, denoting $\cE(\omega'):=\{\omega \le \omega': \partial \omega \subset B, \omega_B \equiv 1\}$, the marginal law of $\eta'$ can thus be written as
  \begin{align*}
    \P(\eta' = \omega')
    &= \frac{\left|\cE(\omega') \right|}{Z^B_{G,x,y}}
      \prod_{t \in E \cup V}\big( x_t \1_{\{\omega'(t)=1\}}+(1-x_t)\1_{\{\omega'(t) =0\}}\big).
  \end{align*}
  The calculation boils down to counting the number of elements in $\cE(\omega')$ when $\omega'_B \equiv 1$ (otherwise $\cE(\omega')$ is empty).
 Let $G^*$ be the graph obtained from $G$ by introducing a ghost vertex $v^*$, connecting it to every vertex in $V \setminus B$, and then identifying all vertices in $B$ with it (keeping any multiple edges and self loops which may have been created). Any $\tau \in \{0,1\}^{E \cup V}$ with $\tau_B\equiv 1$ defines a spanning subgraph $\tau^{*}$ of $G^*$ whose edges consist of $\{ e \in E : \tau(e)=1\}$ and $\{ \{v,v^*\} : v \in V \setminus B, \tau_v=1\}$. Note that the map $\tau \mapsto \tau^*$ is then a bijection between $\cE(\omega')$ and the set of all even subgraphs of $(\omega')^{*}$.\footnote{This relies on the fact that the number of odd degree vertices in any finite graph is even, so that $v^*$ cannot be (the only vertex) of odd degree in $\omega^*$.}
  Now we use the fact that the number of even subgraphs of any finite graph $H$ with $m$ connected components is given by $2^{|E(H)|-|V(H)|+m}$.
  Applying this to $H=(\omega')^*$, we get that $|\cE(\omega')|=2^{|\omega'|-|V|+k(\omega')}$, where $|\omega'| = \sum_{t \in E \cup V} \omega'_t$ and where $k(\omega')$ was defined in~\cref{sec:FK_ising} (note that $|E(H)|=|\omega'|-|B|$, $|V(H)|=|V|-|B|+1$ and $m=k(\omega')+1$). Thus,
  \begin{equation}
    \P(\eta' = \omega') = \frac{1}{Z^B_{G,x,y}} 2^{k(\omega') + |\omega'|-|V|} \prod_{t \in E \cup V} \big( x_t \1_{\{\omega'(t)=1\}}+(1-x_t)\1_{\{\omega'(t) =0\}}\big) \1_{\{\omega'_B \equiv 1\}}, \label{eq:coupling2}
  \end{equation}
  which gives that
 \[ \P(\eta' = \omega') \propto 2^{k(\omega')} \prod_{t \in E \cup V} \big(q_t \1_{\{\omega'_t =1\}} +(1-q_t)\1_{\{\omega'_t =0\}}\big) \1_{\{\omega'_B \equiv 1\}}, \]
  where $q_t= p$ if $t \in E$ and $q_t = p_h$ if $t \in V$. Thus this law is the same as $\phi^B_{G,p,p_h}$ as desired.
  The fact that given $\eta'$, the conditional law of $\eta$ is that of a uniform percolation configuration whose boundary is contained in $B$ is clear from the expression in~\eqref{eq:coupling1} since the weight on the right-hand side is independent of $\omega \subset \omega'$.
\end{proof} 

\begin{remark}\label{rmk:ghost}
We will use \cref{prop:coupling} in two situations.
The first is in order to generate a sample $\eta \sim \bP_{G,x,y}$ from the free Loop $O(1)$ measure. Indeed, \cref{prop:coupling} (used with $B=\emptyset$) tells us that such a sample $\eta$ can be obtained by first sampling $\omega' \sim \phi_{G,p,p_h}$ and then sampling a uniform even subgraph from the enhanced graph $(\omega')^*$ described in the proof of \cref{prop:coupling}.

In the second situation, we wish to generate a sample $\eta \sim \bP^{\w}_{G^{\w}_n,x,y}$ from the wired Loop $O(1)$ measure (recall that $G^{\w}_n$ is obtained from $G$ by identifying all vertices outside of $G_n$ into a single vertex $\Delta$, and that $\bP^{\w}_{G^{\w}_n,x,y}$ is simply $\bP^B_{G^{\w}_n,x,y}$ with $B=\{\Delta\}$). In this case, \cref{prop:coupling} (used with $B=\{\Delta\}$) tells us that $\eta$ can be obtained by first sampling $\omega' \sim \phi^{\w}_{G^{\w},p,p_h}$ and then sampling a uniform even subgraph of the enhanced graph $(\omega')^*$ as described in the proof of \cref{prop:coupling}.
\end{remark}

\section{Generating uniform even subgraphs}\label{sec:uni}

In light of \cref{prop:coupling}, sampling uniform even subgraphs of various random graphs is relevant for this work, and will be a key tool that we exploit.
In this section we study the ``uniform'' even subgraph of an infinite graph which can be obtained as a weak limit of uniform even subgraphs on finite graphs with free or wired boundary conditions, and discuss several ways of sampling it.

We begin by discussing uniform even subgraphs of finite graphs in \cref{sec:uni_even}. We then discuss deterministic properties of even subgraphs of infinite graphs in \cref{sec:deterministic_even}. Finally, we discuss uniform even subgraphs of infinite graphs in \cref{sec:proj} and consequences for the Loop $O(1)$ model in \cref{sec:loop}.

\subsection{Uniform even subgraph of finite graphs}\label{sec:uni_even}

Let $G = (V,E)$ be a finite graph with vertex set $V$ and edge set $E$ (we allow multiple edges).
A spanning subgraph $H$ of $G$ consists of the vertex set $V$ and an edge set $E(H) \subset E$ (all subgraphs are implicitly assumed to be spanning).
Given two subgraphs $H_1$ and $H_2$, define the addition operation $\oplus$ as $H_1 \oplus H_2 = (V, E(H_1) \Delta E(H_2))$.
Given a collection of subgraphs $\cH$ which is closed under the $\oplus$ operation, we can think of $(\cH, \oplus)$ as a vector space over $\Z_2$ (the cyclic group of order $2$).
More precisely, consider the vector space $\cV:= (\{0,1\}^E, \oplus)$ where $\omega \oplus \omega' = ((\omega_e + \omega_e') \mod 2)_{e \in E}$.
Then $(\cH, \oplus)$ can be seen as a subspace of $\cV$.
We say that a collection of subgraphs $\cU$ \textbf{generates} $\cH$ if $\textsf{span}(\cU) = \cH$, where \textsf{span} denotes the linear span in this vector space.
 
As noted, we are primarily concerned with the space $\cE = \cE(G)$ consisting of all even subgraphs of $G$.
The space $\cE$ is the kernel of the degree mod 2 operator from $\{0,1\}^E$ to $\{0,1\}^V$, and as such $\cE$ is closed under $\oplus$, and is a linear subspace of $\cV$.
It is easy to show that the set of all simple cycles in $G$ generates $\cE$.
It is also not hard to see that this need not be a basis for $\cE$.
The following lemma provides a useful way to generate a uniformly random element from $\cE$.
We use the usual summation notation $\sum$ as a shortand for iterating the operation $\oplus$.
  
\begin{lemma}\label{prop:cycle_finite}
  If $\cB = \{b_1,\dots,b_m\}$ spans a finite vector space $\mathbb{V}$ over $\Z_2$, and $\{\ve_i\}_{i=1}^m$ are independent $\Bern(1/2)$ random variables, then $L=L(\cB) := \sum \ve_i b_i$ is a uniform element of $\mathbb{V}$.
\end{lemma}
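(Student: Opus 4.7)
The plan is to view $L$ as the image of the uniformly random vector $\ve = (\ve_1, \ldots, \ve_m) \in \Z_2^m$ under a $\Z_2$-linear map, and then invoke the elementary fact that a surjective homomorphism of finite abelian groups pushes the uniform measure to the uniform measure.

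First I would define the $\Z_2$-linear map $K: \Z_2^m \to \mathbb{V}$ by $K(c_1, \ldots, c_m) = \sum_{i=1}^m c_i b_i$. This is well-defined and surjective precisely because $\cB$ spans $\mathbb{V}$. Since the $\ve_i$ are independent $\Bern(1/2)$, the random vector $\ve$ is uniform on $\Z_2^m$, and by definition $L = K(\ve)$.

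Next, for any fixed $v \in \mathbb{V}$, the fiber $K^{-1}(v)$ is either empty or a coset of the subgroup $\ker K \subset \Z_2^m$; surjectivity rules out the former, so $|K^{-1}(v)| = |\ker K|$, a quantity independent of $v$. Combined with the partition $\Z_2^m = \bigsqcup_{v \in \mathbb{V}} K^{-1}(v)$, this gives $|\ker K| = 2^m / |\mathbb{V}|$, and hence
\[
  \P(L = v) \;=\; \frac{|K^{-1}(v)|}{2^m} \;=\; \frac{1}{|\mathbb{V}|},
\]
which is the desired uniformity.

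There is no genuine obstacle here: the statement is really just the coset-counting argument above. The only point worth flagging is that we do not assume $\cB$ to be a basis — the spanning hypothesis alone is what guarantees surjectivity of $K$, and any linear dependencies among the $b_i$ simply enlarge $\ker K$ without affecting the uniformity of the pushforward.
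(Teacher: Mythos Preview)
Your argument is correct: viewing $L$ as the pushforward of the uniform measure on $\Z_2^m$ under the surjective linear map $K$, and then using that all fibers of a surjective group homomorphism have the same cardinality, immediately gives uniformity of $L$ on $\mathbb{V}$.

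The paper's proof is slightly different in execution. Rather than counting fibers, it extracts a basis $\cB_1 \subset \cB$, writes $L = L_1 + L_2$ with $L_1 = \sum_{b_i \in \cB_1} \ve_i b_i$ and $L_2$ the remaining sum, observes that $L_1$ is already uniform on $\mathbb{V}$ (since $\cB_1$ is a basis), and then uses that a uniform random variable plus an independent random variable is still uniform. Your coset-counting argument is arguably the cleaner and more conceptual version; the paper's decomposition into ``basis part plus extra'' has the minor advantage that it foreshadows the analogous construction used later in the infinite-graph setting (\cref{prop:limit_projection}), where one extends a partial generating set and couples via shared Bernoulli variables on the common part. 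But for the present lemma the two arguments are equally valid and equally short.
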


\begin{proof}[Sketch of proof]
  If $\cB$ is a basis the claim holds since $L$ is a bijection between $\mathbb{V}$ and $(\ve_i)$.
  Otherwise write $\cB$ as a disjoint union of a basis $\cB_1$ and extra vectors $\cB_2$, and write accordingly $L=L_1+L_2$.
  Then $L_1$ is uniform in $V$, and thus so is $L_1+L_2$ regardless of the law of $L_2$.
\end{proof}


\begin{corollary}\label{C:cycle_finite}
  If $\cB = \{C_i\}_i$ is a generating set of cycles in $G$, and $\ve_i$ are independent $\Bern(1/2)$ random variables, then $L := \sum \ve_i C_i$ is a uniform even subgraph of $G$.
\end{corollary}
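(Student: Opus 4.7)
The proof will be essentially a one-line application of \cref{prop:cycle_finite} to the vector space $\cE=\cE(G)$ of even subgraphs of $G$, so the plan is to verify the hypotheses of that lemma.

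First I would observe that $\cE$ is a finite vector space over $\Z_2$ under the operation $\oplus$: it is a subspace of $\cV=(\{0,1\}^E,\oplus)$, being the kernel of the $\Z_2$-linear map $\omega\mapsto(\deg_\omega(v)\bmod 2)_{v\in V}$, and $E$ is finite since $G$ is. Next I would note that every simple cycle $C$ in $G$ induces an even subgraph (every vertex on $C$ has degree exactly $2$, all other vertices have degree $0$), so each $C_i\in\cE$. Combined with the hypothesis that $\cB=\{C_i\}_i$ generates $\cE$, this puts us exactly in the setting of \cref{prop:cycle_finite} with $\mathbb{V}=\cE$.

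Applying \cref{prop:cycle_finite} then gives that $L=\sum\ve_iC_i$ is a uniformly random element of $\cE$, which is by definition a uniform even subgraph of $G$. I expect no real obstacle here — the only thing to mention is that the generating set $\{C_i\}_i$ is implicitly finite (since $G$ is finite there are only finitely many simple cycles, so one may as well assume $\cB$ is finite, and in any case only finitely many $C_i$ can be linearly independent, so the remaining $\ve_iC_i$ summands are absorbed harmlessly by the argument in the proof of \cref{prop:cycle_finite}).
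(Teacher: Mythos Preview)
Your proposal is correct and matches the paper's approach: the paper states \cref{C:cycle_finite} as an immediate corollary of \cref{prop:cycle_finite} with no separate proof, and your argument simply spells out the (straightforward) verification that $\cE(G)$ and $\cB$ satisfy the hypotheses of that lemma.
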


\subsection{Even subgraphs of infinite graphs}\label{sec:deterministic_even}

The goal of this subsection is to extend \cref{C:cycle_finite} to generate ``uniform'' even subgraphs of infinite graphs.
There are two questions at hand here.
Firstly, does the limit of uniform even subgraphs of some exhaustion by finite subgraphs exist, and if so, in which sense?
Secondly, is it possible to directly generate a sample from the limiting probability measure in a nice way?

Let $G$ be a infinite, locally finite graph (we do not require $G$ to be connected).
Since we deal with an external field in the context of Ising and Loop $O(1)$, we introduce a new special vertex $v^*$ called the \emph{ghost vertex}, and allow any subset of the vertices of $G$ to be neighbors of $v^*$, perhaps with finitely many multiple edges between $v^*$ and any given vertex of $G$.
Let $G^*$ be this new graph.
Thus, $G^*$ is locally finite except possibly at the ghost vertex $v^*$, where the degree could be infinite.
We define the \textbf{wired cycle space} to be
$$
\cE^{\w} := \Big\{ \gamma \subset E(G^*): \text{every vertex other than $v^*$ has even degree in $\gamma$} \Big\},
$$
and we define the \textbf{free cycle space} to be
$$
\cE^{\f} := \overline{\mathsf{span}}\Big\{ \gamma \subset E(G^*): \gamma \text{ is a finite cycle}\Big\},
$$
where $\overline{\mathsf{span}}$ is the closure of all finite sums in the topology of pointwise convergence on $\{0,1\}^{E(G^*)}$. 
In the absence of a ghost vertex (when $v^*$ is isolated in $G^*$), the wired cycle space is simply the space of all even subgraphs. A \textbf{ray} is an infinite edge-disjoint path in $G^*$, i.e., a sequence of vertices $(v_0, v_1, \ldots)$ such that $v_{i}$ is adjacent to $v_{i+1}$ in $G^*$ for all $i \ge 0$ and all the (directed) edges $(v_i, v_{i+1})$ are distinct. It is also not hard to see that, in general, the wired cycle space is the $\overline{\mathsf{span}}$ of all finite cycles, bi-infinite paths, and rays starting at $v^*$.
Clearly, $\cE^{\f} \subset \cE^{\w}$, but in general, $\cE^{\f} \neq \cE^{\w}$, even when the ghost vertex is isolated. This is clearly demonstrated by the graph $\Z$ or by the slightly less trivial example of the ladder graph, where a bi-infinite path corresponding to a single copy of $\Z$ belongs to $\cE^{\w}$ but not to $\cE^{\f}$. See \cite{diestel2005cycle} or \cite[Chapter 8]{diestel2005graph} for some topological interpretations of such cycle spaces in infinite graphs.


There is an important connection between the wired and free cycle spaces and the space of ends of a graph. Let us recall the definition of an end of a graph.  Two rays $r$ and $s$ are equivalent if there exists a third ray $t$ which intersects $r$ and $s$ infinitely often. It can be checked that this defines an equivalence relation on rays.  An \textbf{end} of a graph is an equivalence class of rays. We say a graph is \textbf{one-ended} if it has exactly one end. For example, $\Z^2$ is one-ended, $\Z$ and the ladder graph each have two ends, while the infinite binary tree has $2^{\aleph_0}$ many ends (the ends of a tree can be identified with rays starting at a fixed vertex). If $H$ is a subgraph of $G^*$, then there is a natural map which maps the ends of $H$ to the ends of $G^*$, by sending the equivalence class of a ray $r$ in $H$ to its equivalence class in $G$ (if two rays are equivalent in $H$, then they are equivalent in $G$, so that this is well defined).
We say that $H$ is \textbf{end faithful} if this map is a bijection.
For example, $\Z \times \{0\}$ is not end faithful to $\Z^2$, but $\Z \times \N$ is.
We now state a useful result about the existence of an end faithful spanning tree of a graph.

\begin{thm}[{Halin \cite{halin1964}, see \cite[Theorem 2.2]{diestel1992end} for an English version}]\label{halin}
Every connected graph has an end faithful spanning tree.
\end{thm}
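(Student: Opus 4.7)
The plan is to construct the tree by depth-first search, producing what is called a \emph{normal} spanning tree, and to deduce end-faithfulness from normality. Recall that a spanning tree $T$ of $G$ rooted at $r$ is \emph{normal} if every edge $\{u,v\}\in E(G)$ connects two $T$-comparable vertices, meaning that one of $u,v$ lies on the $T$-path from $r$ to the other. Two facts would drive the argument: for countable graphs, normal spanning trees exist; and every normal spanning tree is end-faithful. Since every locally finite connected graph (in particular every graph appearing in this paper) is countable, this suffices.

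First I would construct $T$. Fix a root $r \in V(G)$ and a well-ordering of $V(G)$. Run depth-first search from $r$: maintain a current path in $T$, and at each step either extend by moving to the smallest-index unvisited neighbor of the current endpoint, or backtrack along the path if no such neighbor exists. Connectivity of $G$ ensures that every vertex is eventually visited, so $T$ is spanning. By construction, every non-tree edge $\{u,v\}$ is a back edge: when the later-visited endpoint was added, the other was on the current DFS path, hence a $T$-ancestor. This is exactly the normality condition.

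Next I would verify that the natural map $\Phi$ from ends of $T$ to ends of $G$ is a bijection. For injectivity, let $r_1, r_2$ be tree rays from $r$ representing distinct ends of $T$: after a common initial segment ending at some vertex $a$, they descend into disjoint subtrees $T_1, T_2$ hanging off two children of $a$. Any edge of $G$ with one endpoint in $T_1$ and the other in $T_2$ would connect two $T$-incomparable vertices, contradicting normality; hence any $G$-path between the tails of $r_1$ and $r_2$ must pass through the finite ancestor set of $a$, ruling out a common third ray meeting both tails infinitely often. For surjectivity, given a representative ray $s=(s_0,s_1,\ldots)$ of an end $\omega$ of $G$, let $P_n$ be the $T$-path from $r$ to $s_n$; local finiteness of $T$ and K\"onig's lemma yield a tree ray $r'$ from $r$ whose finite initial segments eventually coincide with those of the $P_n$, and the tree paths from the vertices of $r'$ to the appropriate $s_n$ then witness $r'\sim s$ in $G$.

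The technical heart of the argument lies in the surjectivity step, where one must verify that the $P_n$ really do share stably growing common prefixes rather than $s$ ``escaping'' at bounded tree depth; this uses local finiteness of $T$ together with uniqueness of tree paths from the root. For uncountable graphs, normal spanning trees need not exist, and Halin's original proof uses a more delicate transfinite construction; but this generality is unnecessary here since all graphs in this paper are countable.
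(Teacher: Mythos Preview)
The paper does not supply a proof of this theorem; it is quoted (with references to Halin and Diestel) as a known result and used as a black box. So there is no ``paper's own proof'' to compare against, and I evaluate your argument on its own merits.

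Your overall strategy---build a normal spanning tree and deduce end-faithfulness from normality---is standard and sound, and your injectivity argument is correct. However, the construction step has a genuine gap. The naive depth-first search you describe need not visit every vertex of an infinite connected graph, even a locally finite one. Take the ray $v_0v_1v_2\cdots$ with one extra vertex $w$ adjacent only to $v_0$, and order the vertices so that $v_1$ precedes $w$. Your DFS from $v_0$ moves to $v_1$, then to $v_2$, and so on forever; it never backtracks, so $w$ is never reached. Thus the sentence ``connectivity of $G$ ensures that every vertex is eventually visited'' is false. The existence of normal spanning trees in countable connected graphs is indeed a theorem (Jung), but its proof requires more than a single $\omega$-indexed DFS: one either runs a transfinite DFS with a rule for restarting after an infinite branch has been traversed, or one builds the tree stage by stage, at step $n$ adjoining a carefully chosen finite path so as to absorb $v_n$ while preserving normality. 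Your write-up skips precisely this point.

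A minor remark on surjectivity: the K\"onig's lemma argument you sketch can be replaced by something cleaner that uses normality directly. For each $n$, let $\lceil s_n\rceil$ denote the $T$-path from $r$ to $s_n$. Since $\{s_n,s_{n+1}\}\in E(G)$, normality forces one of $s_n,s_{n+1}$ to be a $T$-ancestor of the other, so one of $\lceil s_n\rceil,\lceil s_{n+1}\rceil$ contains the other. Hence $\bigcup_n\lceil s_n\rceil$ is a chain in $T$ containing every $s_n$; it is infinite (the $s_n$ are distinct), so it is a $T$-ray that meets $s$ infinitely often and is therefore equivalent to $s$. This avoids any appeal to local finiteness of $T$.
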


Just like in the finite case, it is useful to have a nice generating set for the wired and free cycle spaces.
We say that a countable collection $\cC$ of subgraphs is \textbf{locally finite} if every edge in $G^*$ is contained in only finitely many elements of $\cC$.
We say that $\cC$ \textbf{generates} $\cH$ if $ \overline{\mathsf{span}} (\cC) = \cH$.   
The existence of a locally finite generating set for $\cE^{\w}$ was shown in~\cite[Proposition 2.4]{grimmett2007random} in the case when $v^*$ is isolated.
The following lemma is a simple extension of this to our more general setting.

\begin{lemma}\label{lem:gen_set_GJ}
  There exists a locally finite generating for $\cE^{\f}$ consisting of finite cycles.
  There exists a locally finite generating for $\cE^{\w}$ consisting of finite cycles, bi-infinite paths and rays starting from the ghost vertex.
\end{lemma}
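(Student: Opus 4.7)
To prove the lemma, I would adapt the exhaustion argument of Grimmett--Janson \cite[Proposition 2.4]{grimmett2007random}, which establishes the analogous statement for $\cE^{\w}$ when $v^*$ is isolated, so as to handle the general case where $v^*$ may have infinite degree and to distinguish between $\cE^{\f}$ and $\cE^{\w}$. For $\cE^{\f}$, my starting point is a nested exhaustion $G^*_1 \subset G^*_2 \subset \cdots$ of $G^*$ by finite connected subgraphs, each containing $v^*$ together with finitely many of its incident edges. Since $\cE(G^*_n)$ is finite-dimensional with a basis of finite cycles, I would inductively construct nested bases $\cB_1 \subset \cB_2 \subset \cdots$ of these spaces and set $\cC := \bigcup_n \cB_n$. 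It is immediate from the definitions that $\cC$ generates $\cE^{\f}$: every finite cycle lies in some $\cE(G^*_n)$ and is thus a finite $\Z_2$-sum of elements of $\cB_n$, and $\cE^{\f}$ is by definition the closure of the span of finite cycles.

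The real content is ensuring that $\cC$ is locally finite. My plan is that at each inductive step, after picking any representative $C \in \cE(G^*_n)$ of a basis class in the quotient $\cE(G^*_n)/\cE(G^*_{n-1})$, I would reduce $C$ modulo $\mathsf{span}(\cB_{n-1})$ so as to minimize its overlap with $E(G^*_{n-1})$. Since all choices of $C$ within a given class differ by elements of $\mathsf{span}(\cB_{n-1})$, this reduction is legitimate and preserves the quotient class while forcing each new basis element to genuinely ``use'' the new region $E(G^*_n)\setminus E(G^*_{n-1})$. A counting argument using $\dim \cE(G^*_n) = |E(G^*_n)| - |V(G^*_n)| + 1$ should then bound, for each fixed edge $e$, the number of inductive steps at which a basis cycle through $e$ is introduced, yielding local finiteness.

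For $\cE^{\w}$, I would first invoke \cref{halin} to fix an end-faithful spanning tree $T$ of $G^*$. The quotient $\cE^{\w}/\cE^{\f}$ is spanned by classes of bi-infinite paths in $T$ between pairs of ends and by rays in $T$ from $v^*$ tending to an end. I would augment the $\cC$ constructed above by including, for each end of $T$ (equivalently of $G^*$), one representative supported in $T$: a ray from $v^*$ if $v^*$ can be joined to the end, and otherwise a bi-infinite path in $T$. Local finiteness of the augmented family follows since $T$ is locally finite away from $v^*$ and the ghost-ray representatives absorb the edges incident to $v^*$. The main obstacle throughout is the local finiteness step for $\cE^{\f}$: naive choices such as fundamental cycles of a spanning tree can easily fail, as illustrated on the ladder graph $\Z \times \{0,1\}$ where the ``top plus bottom plus single rung'' spanning tree yields tree edges lying in infinitely many fundamental cycles, and the reduction modulo $\cB_{n-1}$ described above is precisely what rules out such pathologies.
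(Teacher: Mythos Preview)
Your approach diverges from the paper's in both parts, and the $\cE^{\w}$ argument has a genuine gap.

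For $\cE^{\w}$, augmenting by one ray or bi-infinite path per end of an end-faithful tree $T$ cannot work in general. First, $G^*$ may have uncountably many ends (e.g.\ a regular tree), so the augmented family need not even be countable. Second, even with countably many ends, local finiteness is not automatic: take $G$ to be a vertex $\rho$ with infinitely many disjoint rays $r_1,r_2,\dots$ attached and $v^*$ isolated. Then $T=G$, $\cE^{\f}=\{\emptyset\}$, and every bi-infinite path in $T$ passes through $\rho$. Your rule ``one bi-infinite path per end'' leaves the second end of each path unspecified; the choice $r_i\cup r_{i+1}$ happens to be locally finite, but the equally admissible choice $r_i\cup r_1$ is not. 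The sentence ``$T$ is locally finite away from $v^*$'' conflates local finiteness of the graph with local finiteness of a family of subgraphs. For $\cE^{\f}$, the inductive-basis scheme is plausible, but the key step is only asserted: ``minimize overlap with $E(G^*_{n-1})$'' does not determine a unique representative when ties occur, and the dimension formula $|E|-|V|+1$ controls only how many new basis cycles appear at each stage, not how they distribute over a fixed edge. I could not extract a proof of local finiteness from the sketch as written.

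The paper's argument is simpler and handles both cases at once. One fixes an enumeration $e_1,e_2,\dots$ of $E(G^*)$ and, for each $k$, asks whether some $H$ in $\cE^{\w}$ (resp.\ $\cE^{\f}$) contains $e_k$ while avoiding $e_1,\dots,e_{k-1}$. If so, one extracts from such an $H$ a single cycle, bi-infinite path, or ray from $v^*$ through $e_k$ that also avoids $e_1,\dots,e_{k-1}$; in the free case $H$ may be taken to be a finite even subgraph, hence an edge-disjoint union of finite cycles, one of which serves as $C_k$. Local finiteness is then immediate, since $e_j\in C_k$ forces $k\le j$, and the greedy structure makes the generating property routine. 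This sidesteps both the end-enumeration problem and any dimension-counting.
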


\begin{proof}
  Let us first consider $\cE^{\w}$.
  Let $\{e_1,e_2,\dots\}$ be an arbitrary ordering of the edges of $G^*$.
  For each $k$, let $\cH_k$ be the set of all $H \in \cE^{\w}$ which are disjoint from $\{e_1,\dots,e_{k-1}\}$.
  If there is no $H\in \cH_k$ with $e_k\in H$, then let $C_k = \emptyset$.
  If such $H\in\cH_k$ exists, let $C_k$ be a cycle, bi-infinite path, or ray through $e_k$ which does not pass through any $e_i$ with $i<k$.
  Such $C_k$ exists since $H$ contains $e_k$ and has all even degrees, and so must contain such a $C_k$.
  Clearly the resulting $\{C_k\}$ is locally finite and generates $\cE^{\w}$.
  (The proof that $\{C_i\}_i$ is locally finite and generating is exactly the same as in \cite[Proposition 2.4]{grimmett2007random}.)

  For $\cE^{\f}$, the construction is similar, except that we need $C_{k}$ to be a finite cycle.
  To see that such a finite cycle exists, let $H\in\cH_k$ be a non-empty graph containing $e_k$ and no $e_i$ with $i<k$.
  We can approximate $H$ by a finite sum of finite cycles $H'$ that also includes $e_k$ and no $e_i$ with $i<k$.
  However, a finite sum of cycles is necessarily a sum of disjoint cycles, and so $H'$ includes the desired $C_k$.  
\end{proof}
%

The problem (for our purposes) with the construction above is that it relies on an arbitrary ordering of the edges.
We now address the deterministic question of how to generate ``good'' generating sets, both for the free and wired cycle spaces, which can potentially be constructed in an invariant manner.
In the next two lemmas, we describe a method to construct a locally finite generating set for $\cE^{\w}$, and also for $\cE^{\f}$ when the ghost vertex is isolated.

We begin with the free case.
For a spanning tree $T$ of $G$ and an edge $e \in E(G) \setminus T$, let $C^T_e$ be the unique cycle in $T \cup \{e\}$ (which necessarily contains~$e$).

\begin{lemma}\label{lem:gen_set_free}
  Suppose that $v^*$ is isolated in $G^*$ and let $T$ be an end faithful spanning tree of $G$ (or of each connected component if $G$ is not connected).
  Then the collection $\{C^T_e\}_{e \in E(G) \setminus T}$ is a locally finite generating set for $\cE^{\f}$.
\end{lemma}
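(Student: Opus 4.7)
The plan is to prove the lemma in two parts: (i) that $\{C^T_e\}_{e \notin T}$ generates $\cE^{\f}$, and (ii) that each edge of $G$ lies in only finitely many of the $C^T_e$.

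For the generating property, it is enough to show that every finite cycle $C$ in $G$ lies in $\mathsf{span}\{C^T_e\}$. Consider the symmetric difference
\[ D := C \oplus \sum_{e \in C,\, e \notin T} C^T_e. \]
Each non-tree edge of $C$ cancels with the unique fundamental cycle $C^T_e$ containing it, so $D$ is an even subgraph contained in $T$. Since $T$ is a forest, the only such subgraph is empty, giving $C = \sum_{e \in C,\, e \notin T} C^T_e$. Taking closed spans and recalling that finite cycles generate $\cE^{\f}$ by definition completes this part.

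For local finiteness, fix an edge $f$ of $G$. If $f \notin T$, then $f$ can appear in $C^T_e$ only when $e=f$, which is immediate. If $f \in T$, deleting $f$ splits $T$ into two subtrees $T_1, T_2$ with vertex sets $V_1, V_2$, and $f \in C^T_e$ precisely when $e$ is a cross edge between $V_1$ and $V_2$. So the task reduces to showing that only finitely many cross edges exist in $G$.

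The main obstacle, and the only place where end-faithfulness is used, is this last claim, which I would prove by contradiction. Assume the cut contains infinitely many edges $e_k$ with endpoints $u_k \in V_1$, $v_k \in V_2$. Local finiteness of $G$ allows us to pass to a subsequence where the $u_k$ (and the $v_k$) are distinct, so $T_1$ and $T_2$ are infinite locally finite trees. Applying König's lemma to $T_1$ rooted at the endpoint of $f$ in $V_1$, I build a ray $r \subset T_1$ along directions whose subtree contains infinitely many of the $u_k$; symmetrically, construct $s \subset T_2$. The rays $r$ and $s$ represent distinct ends of $T$, since removing a finite vertex set containing the endpoints of $f$ separates $T_1$ from $T_2$ inside $T$. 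However, for any finite $F \subset V(G)$, local finiteness of $G$ ensures that only finitely many cross edges touch $F$, and the König-type construction then provides an $e_k$ whose $T_1$- and $T_2$-paths back to the respective tails of $r$ and $s$ both avoid $F$; concatenating via $e_k$ connects the tails of $r$ and $s$ inside $G \setminus F$. Thus $r$ and $s$ lie in the same end of $G$, contradicting the injectivity of the map $\mathrm{ends}(T) \to \mathrm{ends}(G)$ that end-faithfulness guarantees. Hence the cut is finite, which finishes the proof.
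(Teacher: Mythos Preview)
Your argument follows the same two-part structure as the paper's proof, and the generation step is identical. For local finiteness you also argue by contradiction, producing two ends of $T$ that collapse in $G$; the paper does this by taking a subsequential limit of the cycles $C^T_{e_j}$ (which yields a bi-infinite path in $T$ through $f$), whereas you build rays $r\subset T_1$ and $s\subset T_2$ via K\"onig's lemma. These are essentially the same idea, but your version has a small gap.

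The issue is that you run K\"onig separately on $T_1$ (following the $u_k$) and on $T_2$ (following the $v_k$). This guarantees that below each $r_n$ there are infinitely many $u_k$, and below each $s_m$ there are infinitely many $v_k$, but these two infinite index sets need not intersect. So when you assert that ``the K\"onig-type construction then provides an $e_k$ whose $T_1$- and $T_2$-paths back to the respective tails of $r$ and $s$ both avoid $F$'', you have not actually secured a common index $k$. (Concretely: if $T_1$ has two branches $A,B$ and $T_2$ has two branches $A',B'$, with the even-indexed $e_k$ linking $A$ to $B'$ and the odd-indexed linking $B$ to $A'$, your separate K\"onig runs could output $r=A$ and $s=A'$, which are \emph{not} in the same end of $G$.) The fix is to do the construction sequentially: first extract $r$ together with an infinite subsequence $K_1$ of indices whose $u_k$ go to infinity along $r$, and then run K\"onig on $T_2$ using only $\{v_k : k\in K_1\}$. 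This is exactly what the paper's subsequential-limit argument accomplishes in one stroke, and with that adjustment your proof goes through.
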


\begin{proof}
  We first show that $\cC := \{C^T_e\}_{e \in E(G) \setminus T}$ is a locally finite.
  An edge $e\not\in T$ is contained in a unique cycle $C^T_e$.
  Suppose that some edge $e\in T$ is contained in infinitely many cycles $C^T_{e_j}$.
  Taking a subsequential limit of $C_{e_j}$ yields a bi-infinite path in $T$, which has infinitely many edges (namely the $e_j$) between its two ends.
  This means that there are two ends of the tree, which are in the same end of $G$, which is a contradiction to end faithfulness of $T$.

  Let us next show that $\cC$ generates $\cE^{\f}$. To this end, it is enough to show every finite cycle $H$ in $G$ is generated by $\cC$. Toward showing this, fix $H$ and note that
  \[ H' := H \oplus \sum_{e \in H \setminus T} C_e^T , \]
 is a finite even subgraph which is contained in $T$. Since a tree contains no non-trivial finite even subgraphs, $H'$ must be empty, which means that $H$ is generated by $\cC$.
\end{proof}


We next address the wired case, for which we require some definitions.
Let $F$ be a one-ended spanning forest of $G$ (i.e., a subgraph of $G$ containing no cycle and whose connected components are all infinite and one ended).
For an edge $e \in E(G) \setminus F$, let $C^F_e$ be the unique cycle or bi-infinite path in $F \cup \{e\}$ (which necessarily contains~$e$).
For an edge $e \in E(G^*) \setminus E(G)$, let $C^F_e$ be the unique ray starting from $v^*$ in $F \cup \{e\}$ (which first crosses $e$ and then goes along the unique ray in $F$ starting at the other endpoint of~$e$).

We say an end $\mathsf{e}$ of a subgraph $H$ of $G$ is \textbf{dense} in $G^*$ if there exists a ray $p \in \mathsf e$ such that infinitely many vertices in $p$ are adjacent to $v^*$.
We say that $G^*$ is \textbf{end dense} if every end of $G$ is dense in $G^*$. 

Let $T$ be a $k$-ended spanning tree of $G$ whose every end is dense in $G^*$.
For every $e \in E(G^*) \setminus E(G)$, consider the $k$ rays starting from $v^*$ in $T \cup \{e\}$ (which first cross $e$ and then go along a ray of $T$ (one for each end) from the other endpoint of $e$). Suppose that ray $i$ is $(v^*, u_1,u_2,\ldots)$ and let $u_n$ be the first vertex which is adjacent to $v^*$ (which exists since the ends of $T$ are dense in $G^*$). Let $C^T_{e,i}$ be the cycle $(v^*,u_1, \ldots, u_k, v^*)$.
%

\begin{lemma}\label{lem:gen_set_forest}~

\begin{itemize}
\item Let $F$ be a one-ended spanning forest of $G$. Then the collection $\{C^F_e\}_{e \in E(G^*) \setminus F}$ is a locally finite generating set for $\cE^{\w}$.

\item Suppose that $G$ is connected and $k$-ended with $1 \le k<\infty$ and let $T$ be an end faithful spanning tree of $G$ whose every end is dense in $G^*$. Then the collection $\{C^T_e\}_{e \in E(G) \setminus T} \cup \{C^{T}_{e,j}\}_{e \in E(G^*)\setminus E(G), 1\le j \le k}$ is a locally finite generating set for $\cE^{\w}$.
\end{itemize}
\end{lemma}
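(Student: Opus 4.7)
The proof of \cref{lem:gen_set_forest} proceeds in two parts. For each I would verify local finiteness directly and then show that every $H \in \cE^{\w}$ lies in $\overline{\mathsf{span}}$ of the proposed generators via an appropriate $\oplus$-cancellation argument; part 2 reduces in spirit to part 1 via a one-ended spanning forest constructed from $T$.

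For part 1, local finiteness is immediate: $e \notin F$ lies in only $C^F_e$; for $e \in F$, removing $e$ from its one-ended component of $F$ splits it into a finite side $F_{\mathrm{fin}}$ and an infinite side, and any other generator $C^F_{e'}$ containing $e$ forces an endpoint of $e'$ (or its non-ghost endpoint if $e'$ is ghost) to lie in $F_{\mathrm{fin}}$, yielding finitely many generators by local finiteness of $G$. For generating, given $H \in \cE^{\w}$ I would set
\[ H' := H \oplus \sum_{e \in H,\, e \notin F} C^F_e, \]
which converges pointwise by local finiteness. Each summand cancels exactly one non-$F$ edge of $H$, so $H'$ is supported on $F$; since $H' \in \cE^{\w}$ and a one-ended spanning forest admits no nonempty element of $\cE^{\w}$ as a subgraph (no finite cycle, no bi-infinite path, no ray from $v^*$, since $F$ has no ghost edges), we conclude $H' = \emptyset$, so $H$ is generated.

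For part 2, I would first construct a one-ended spanning forest $F$ by removing $k-1$ tree edges $e_1, \ldots, e_{k-1}$ of $T$ so that $F = T \setminus \{e_1, \ldots, e_{k-1}\}$ has $k$ one-ended components, one per end of $T$; this is possible since the $k$ ends of $T$ can be separated by a finite subtree. By part 1, $\{C^F_{e'}\}_{e' \in E(G^*) \setminus F}$ generates $\cE^{\w}$, so it suffices to express each $C^F_{e'}$ as an element of $\overline{\mathsf{span}}(\cC)$, where $\cC$ denotes the proposed generating set. The key tool is a telescoping identity: along any ray $(w_0, w_1, w_2, \ldots)$ in $T$ towards an end $j$, with consecutive $v^*$-neighbors $w_{n_1}, w_{n_2}, \ldots$ (infinitely many, by end-density of $j$), the sum $\sum_{\ell \ge 1} C^T_{\{v^*, w_{n_\ell}\}, j}$ converges pointwise, with all interior closing ghost edges cancelling telescopically, to the set consisting of $\{v^*, w_{n_1}\}$ together with the tree edges of the ray from $w_{n_1}$ onwards. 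Applied one-sidedly, this realises $C^F_{e'}$ for ghost $e'$ (a ray from $v^*$); applied two-sidedly, by indexing $v^*$-neighbors along a bi-infinite path into a $\mathbb{Z}$-sequence so that every ghost edge cancels, it realises $C^F_{e_i}$ for each removed tree edge (a bi-infinite path in $T$); and for $e' \in E(G) \setminus T$ a direct computation shows $C^F_{e'} = C^T_{e'} \oplus \bigoplus_{i:\, e_i \in C^T_{e'}} C^F_{e_i}$, reducing to the bi-infinite-path case.

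Local finiteness for part 2 is the main subtlety, since removing a tree edge $e \in T$ may leave two infinite subtrees. The number of non-tree edges crossing $e$ is finite by an end-faithfulness argument: infinitely many such crossings would, via subsequences concentrated in the two subtrees, yield a single ray in $G$ joining rays in distinct ends of $T$, merging those ends in $G$ and contradicting end-faithfulness. The count of ghost cycles $C^T_{e'', j}$ through $e$ is bounded similarly: end-faithfulness limits $T$ to $k$ end-directions, end-density ensures each direction reaches a $v^*$-neighbor within finitely many tree edges, and only finitely many finite side branches before that first neighbor can contribute further first-$v^*$-neighbors. The main obstacle throughout is the combinatorial bookkeeping in the telescoping sums and the $\oplus$ identity, which depend crucially on the density of ends of $T$ in $G^*$ to ensure every ray we encounter has infinitely many $v^*$-neighbors.
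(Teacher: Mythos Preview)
Your argument for Part 1 is essentially identical to the paper's: the same local-finiteness count via the finite side of $F \setminus \{e\}$, and the same cancellation $H' = H \oplus \sum_{e \in H \setminus F} C^F_e$ followed by the observation that a one-ended forest supports no nonempty element of $\cE^{\w}$.

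For Part 2 your route genuinely differs from the paper's. The paper argues directly: it invokes \cref{lem:gen_set_free} to generate all finite cycles of $G$, then shows that any ray from $v^*$ along $T$ is a telescoping sum of ghost cycles (by end-density), and that any bi-infinite path in $G$ is obtained by summing two such rays and correcting with finite cycles; cycles through $v^*$ are reduced to the bi-infinite-path case. You instead delete $k-1$ edges of $T$ to manufacture a one-ended spanning forest $F \subset T$, invoke Part 1 to know that $\{C^F_{e'}\}$ generates $\cE^{\w}$, and then express each $C^F_{e'}$ in $\overline{\mathsf{span}}(\cC)$ via your telescoping identity, splitting into the three cases $e' \in E(G)\setminus T$, $e' \in \{e_1,\dots,e_{k-1}\}$, and $e'$ ghost. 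Both approaches rest on the same telescoping along end-dense rays and both are correct; your reduction is more systematic and reuses Part 1 cleanly, at the cost of the extra $\oplus$-identity $C^F_{e'} = C^T_{e'} \oplus \bigoplus_{i:\, e_i \in C^T_{e'}} C^F_{e_i}$, whereas the paper's argument is more hands-on but avoids introducing $F$. One point worth tightening in your write-up: for the bi-infinite path $C^F_{e_i}$ the two-sided sum must split at a single $v^*$-neighbour $w_{n_0}$, using direction $j_+$ for indices $\ell \ge 0$ and $j_-$ for $\ell \le 0$, so that the ghost edge at $w_{n_0}$ cancels; a naive $\mathbb{Z}$-indexed sum in a single direction is not locally finite. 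Your local-finiteness sketch for the ghost cycles is correct in outline; the paper makes it concrete by exhibiting a finite subgraph $U \subset T$ (formed by the paths from the endpoints of $h$ to the first $v^*$-neighbour along each of the $k$ rays) that necessarily contains the tree-part of every ghost cycle through $h$.
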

\begin{proof}
For the first item, denote $C_e := C^F_e$ and $\cC:=\{C_e\}_{e \in E(G^*) \setminus F}$.
Let us first prove that $\cC$ is locally finite. Take an edge $f \in E(G^*)$. If $f \notin F$ then the only element of $\cC$ which contains $f$ is $C_f$. Now assume that $f \in F$.
Let $P$ be the set of vertices $v \in V(G)$ which belongs to the unique finite component of $F \setminus \{e\}$. Let $\partial_F P$ denote the set of edges $e \in E(G^*) \setminus F$ incident to $P$. Then $\partial_F P$ is also finite (since the degrees of vertices in $V(G)$ are finite). It is not hard to see that any element of $\cC$ containing $f$ must also intersect $\partial_F P$. As we have already noted, for each $e \notin F$ there is only one element of $\cC$ containing $e$. In particular, the number of elements of $\cC$ containing $f$ is at most $|\partial_F P|$.


Let us next show that $\cC$ generates $\cE^{\w}$. To this end, let $H \in \cE^{\w}$ and define $$H' := \sum_{e \in H \setminus F} C_e .$$
Note that this is well defined since $\cC$ is locally finite. Note that $H'':=H \oplus H'$ is an even subgraph which is contained in $F$.
However, $F$ is a one-ended forest in $G$, and hence contains no nonempty even subgraphs (observe that every nonempty even graph contains either a cycle or a bi-infinite path). Thus, $H''$ is empty, which means that $H=H'$ is generated by $\cC$.

We now turn to the second item. By \cref{lem:gen_set_free}, all finite cycles not involving the ghost vertex are generated. Also any ray started at $v^*$ and then going to infinity along $T$ is also generated (add the finite cycles involving $v^*$ along the ray, using the fact that the tree is end dense). Now take a cycle containing the ghost vertex and without loss of generality assume the cycle is simple. Adding it to the two infinite rays in the generating set containing the edges incident to the ghost vertex in the cycle, we obtain a bi-infinite path in $G$. So it is enough to generate any bi-infinite path $p$ in $G$. If the two ends of $p$ belong to the same end of $G$, then $p$ can be generated by the finite cycles formed by the infinitely many paths joining them and we are done. So assume that the two ends of $p$ belongs to two different ends. To generate $p$ do the following.  Take any two rays started at $v^*$ and going to infinity along the corresponding two ends of $T$ (using the fact that $T$ is end faithful), and add them. This produces an even subgraph $p'$ with two ends, one for each end of $p$. Since these paths can be joined together by infinitely many finite paths (using the definition of ends), it is easy to see that $p$ can be generated by adding finite cycles to $p'$.

Now we show local finiteness of the generating set in the second item. As in the proof of \cref{lem:gen_set_free}, every edge not in $T$ is in exactly one cycle and every edge in $T$ is in only finitely many cycles of $\{C^T_e\}_{e \in E(G) \setminus T}$. It remains to show that every edge $h$ in $T$ contains only finitely many cycles of $\{C^{T}_{e,j}\}_{e \in E(G^*)\setminus E(G), 1\le j \le k}$. Note that there are $k$ rays in $T\setminus \{h\}$ starting from the endpoints of $h$. Take the first vertex in each ray which is connected to the ghost vertex, and join those vertices to $h$ by paths in $T$. Denote by $U$ the subgraph formed by the union of these paths and $h$. Notice that $T \setminus U$ divides $T$ into some finite and some (at most $k$) infinite components. It is now straightforward to see that the vertex set of a cycle in $\{C^T_{e,j}\}_{e \in E(G^*)\setminus E(G), 1\le j \le k}$ which passes through $h$ is contained in $U \cup \{v^*\}$. We conclude that the collection is locally finite. This completes the proof of the second item.
\end{proof}
\begin{remark}
We will use the wired uniform spanning forest as our candidate of $F$ (first item of \cref{lem:gen_set_forest}) and we will show in \cref{USFfactor} that such a forest can be obtained as a factor. The second item of \cref{lem:gen_set_forest} will be used only in the case that $k=2$. 
We do not know how to obtain a candidate for $T$ of \cref{lem:gen_set_free} as a factor of i.i.d.\ in the nonamenable, infinitely ended setting. In fact, we will only use \cref{lem:gen_set_forest} in the case that $G$ is one ended.
\end{remark}

We now describe two sufficient conditions under which $\cE^{\f}$ and $\cE^{\w}$ are equal.  

\begin{lemma}\label{lem:end_dense}
If $G^*$ is end dense, then it is one-ended and $\cE^{\w} = \cE^{\f}$.
\end{lemma}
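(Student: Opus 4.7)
My plan is to establish the two assertions — that $G^*$ is one-ended, and that $\cE^{\w} = \cE^{\f}$ — in sequence.

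For one-endedness, since every end of $G$ is dense in $G^*$, the vertex $v^*$ has infinitely many neighbors, and we may form a ``star ray'' $t^* = (v^*, w_1, v^*, w_2, v^*, w_3, \ldots)$ alternating between $v^*$ and an enumeration of its neighbors (successive directed edges are distinct). It suffices to show every ray $r$ in $G^*$ is equivalent to $t^*$. If $r$ visits $v^*$ infinitely often, each visit uses a distinct edge and thus traverses a distinct neighbor of $v^*$, so $r$ itself intersects both $r$ and $t^*$ at infinitely many distinct vertices and $r \sim t^*$. Otherwise the tail of $r$ is a ray in $G$ lying in some end $\mathsf e$ of $G$. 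By end-density there is a ray $p$ in $\mathsf e$ with infinitely many vertices adjacent to $v^*$, and by the standard characterization of equivalent ends in locally finite graphs, $r$ and $p$ are joined by infinitely many pairwise internally disjoint paths $\pi_1, \pi_2, \ldots$ in $G$. Passing to a subsequence, I interleave: walk along $r$ to meet $\pi_k$, cross to $p$, walk along $p$ to a vertex adjacent to $v^*$, jump to $v^*$ and back to a later such vertex of $p$, walk along $p$ to $\pi_{k+1}$, cross back to $r$, and repeat. This yields a ray $s$ meeting $r$ at infinitely many distinct vertices and $t^*$ at infinitely many distinct neighbors of $v^*$, so $r \sim t^*$.

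For $\cE^{\w} = \cE^{\f}$, the inclusion $\cE^{\f} \subset \cE^{\w}$ is immediate: since each vertex other than $v^*$ has finite degree in $G^*$, the parity condition defining $\cE^{\w}$ is preserved under pointwise limits, making $\cE^{\w}$ closed. For the reverse, by \cref{lem:gen_set_GJ} it suffices to approximate each bi-infinite path $P$ and each ray $R$ starting at $v^*$ pointwise by finite elements of $\cE^{\f}$; specifically, for each finite $S \subset E(G^*)$ I produce a finite even subgraph $F \in \cE^{\f}$ agreeing with $P$ (or $R$) on $S$. The key point is that since $G^*$ is one-ended, $G^* \setminus V(S)$ has a unique infinite component $C$, and since $v^*$ has infinitely many neighbors while $V(S)$ is finite, infinitely many of these neighbors lie in $C$. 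Hence $v^*$ and the tails of $P$ and $R$ all lie in the same infinite component of $G^* \setminus S$, which provides a finite path $Q$ in $G^* \setminus S$ closing a sufficiently long initial segment of $P$ (or $R$) into the desired finite even subgraph $F$; a parity check confirms that $F$ is even and hence a sum of finite simple cycles.

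The main obstacle is the infinite-degree ghost vertex $v^*$, which makes $G^*$ non-locally-finite. This requires care in deducing connectivity properties of $G^* \setminus S$ from one-endedness of $G^*$ (since one-endedness is phrased in terms of removing finite vertex sets rather than finite edge sets, and $v^*$ itself may lie in $V(S)$), and in verifying that the interleaved ray construction for one-endedness respects the distinct-directed-edge requirement despite repeated returns to $v^*$ along its many incident edges.
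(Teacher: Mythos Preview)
Your overall strategy is sound and close to the paper's, but there is a genuine gap in the argument for $\cE^{\w}=\cE^{\f}$. You write: ``since $G^*$ is one-ended, $G^* \setminus V(S)$ has a unique infinite component $C$.'' This inference fails precisely because $G^*$ is not locally finite at $v^*$: the equivalence between ``one end in the ray sense'' and ``unique infinite component after removing any finite vertex set'' breaks down. Concretely, take $G$ to be two disjoint copies of $\N$ with $v^*$ joined to every vertex; then $G^*$ is one-ended (and end dense), yet if $v^*\in V(S)$ the graph $G^*\setminus V(S)$ has two infinite components. You flag this obstacle in your final paragraph, but the argument you actually give does not overcome it.

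The fix is to bypass one-endedness here and use end-density directly, exactly as the paper does. Given a tail of $P$ (or $R$), either it visits $v^*$ infinitely often, or it is eventually a ray in $G$ lying in some end $\mathsf e$; in the latter case end-density supplies a ray $q\in\mathsf e$ with infinitely many neighbours of $v^*$, and since the tail and $q$ lie in the same end of the locally finite graph $G$ they remain connected after deleting the finitely many edges in $S$, while $q$ is still joined to $v^*$ by some surviving ghost edge. Thus both tails lie in the component of $v^*$ in $G^*\setminus S$, and your closing-path $Q$ exists. The paper packages this slightly differently (it writes a ray $p$ from $v^*$ as $q\oplus\sum\gamma$ for finite cycles $\gamma$, and then decomposes $q$ itself into finite cycles through $v^*$), but the content is the same: the connectivity you need comes from end-density of $G^*$, not from one-endedness of $G^*$. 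Your one-endedness argument is fine and essentially matches the paper's.
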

\begin{proof}
To show that $\cE^{\w} = \cE^{\f}$, it is enough to show that any bi-infinite path in $G^*$ or ray starting at $v^*$ in $G^*$ can be generated by finite cycles in $G^*$.
Let $p$ be a ray in $G^*$ starting at $v^*$. Each additional visit of $p$ to $v^*$ creates a finite cycle. By adding these finite cycles to $p$, we may assume that $p$ does not visit $v^*$ after its initial vertex, so that $p$ is contained in $G$ except for its first edge.
Let $\mathsf e$ be the end of $G$ corresponding to $p$. Since $G$ is end dense, we can find a ray $q \in \mathsf e$ in $G$ which contains infinitely neighbors of $v^*$. Without loss of generality, assume that the first vertex of $q$ is $v^*$. Note that $q$ and $p$ have infinitely many disjoint paths between them (since they belong to the same end). Using these paths, it is easy to see that we can find a collection of finite cycles $\Gamma$ such that $p=q\oplus \sum_{\gamma \in \Gamma}\gamma$. Now observe that $q$ can be obtained as a sum of finite cycles in $G^*$ using the edges connecting it to $v^*$. This shows that $p$ is the sum of finite cycles. To handle the case when $p$ is a bi-infinite path in $G^*$, take any finite path $q$ from $v^*$ to $p$, and note that $p$ is the sum of two rays starting from $v^*$, both of which initially go along $q$ and then split and go along either end of $p$. Since the latter rays are generated by finite cycles, so is $p$.
This completes the proof that $\cE^{\w} = \cE^{\f}$.

Finally, we show $G^*$ is one-ended. Any two rays in $G^*$ containing infinitely many neighbors of $v^*$ are clearly equivalent. Suppose that some ray $p$ in $G^*$ has only finitely neighbors of $v^*$. Clearly, this ray visits $v^*$ only finitely often, and hence belongs to some end of $G$. Since $G^*$ is end dense, we can find a ray in $G$ which is equivalent to $p$ and contains infinitely neighbors of $v^*$. This shows that any two rays are equivalent, thereby completing the proof.
\end{proof}

\begin{lemma}\label{cor:1end}
If each connected component of $G$ is one-ended and $v^*$ is isolated, then $\cE^{\w} = \cE^{\f}$.
\end{lemma}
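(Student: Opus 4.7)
The plan is to produce a locally finite generating set of $\cE^{\w}$ consisting entirely of finite cycles; since the inclusion $\cE^{\f}\subset\cE^{\w}$ is automatic, this will immediately give $\cE^{\w}=\cE^{\f}$.

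First I would apply Halin's theorem (Theorem~\ref{halin}) to each infinite connected component of $G$ to obtain an end-faithful spanning tree. Because each such component is one-ended by hypothesis, end-faithfulness forces this tree to be one-ended as well. For each finite component I would take any spanning tree. Concatenating these yields a spanning forest $F$ of $G$, every connected component of which is either finite or a one-ended locally finite tree. Since $v^*$ is isolated in $G^*$, one has $E(G^*)\setminus F=E(G)\setminus F$, and every such edge $e$ has both endpoints in the same tree $T_e$ of $F$. As any two vertices of a locally finite tree are joined by a unique finite path, $T_e\cup\{e\}$ contains a unique, and necessarily finite, cycle $C^F_e$.

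Next, in direct analogy with the first item of Lemma~\ref{lem:gen_set_forest}, I would verify that $\{C^F_e\}_{e\in E(G)\setminus F}$ is locally finite and generates $\cE^{\w}$. Local finiteness is proved exactly as there. For the generating property, given $H\in\cE^{\w}$, the combination
\[
H':=H\oplus\sum_{e\in H\setminus F}C^F_e
\]
is a well-defined (locally finite) even subgraph contained in $F$. However, a disjoint union of finite trees and one-ended locally finite trees supports no nonempty even subgraph: it contains no finite cycle, and it contains no bi-infinite path, since the two directions of a bi-infinite path in a tree give rays intersecting only finitely often and therefore belonging to distinct ends. Hence $H'=\emptyset$, and $H=\sum_{e\in H\setminus F}C^F_e\in\cE^{\f}$.

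The only genuinely infinitary input is that a one-ended locally finite tree contains no bi-infinite path; this is standard, and is exactly the feature that fails for graphs with more than one end (as illustrated by the ladder graph $\Z\times\{0,1\}$, where $\cE^{\f}\neq\cE^{\w}$). Everything else is bookkeeping of the type already carried out in Lemmas~\ref{lem:gen_set_free} and~\ref{lem:gen_set_forest}, so no serious obstacle remains.
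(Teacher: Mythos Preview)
Your argument is correct, but it takes a genuinely different route from the paper's proof. The paper argues directly: since $v^*$ is isolated, it suffices to show that every bi-infinite path $p$ in $G$ lies in $\cE^{\f}$; one-endedness of the component containing $p$ forces the two rays of $p$ to lie in the same end, whence there are infinitely many disjoint finite paths linking them, and summing the finite cycles these paths create with segments of $p$ recovers $p$ as an element of $\cE^{\f}$. No spanning tree is invoked.

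Your approach is more structural: you build, via Halin's theorem, an end-faithful (hence one-ended) spanning tree of each component, and then observe that the resulting collection $\{C^F_e\}_{e\in E(G)\setminus F}$ consists entirely of \emph{finite} cycles and still generates $\cE^{\w}$, by the same cancellation argument as in Lemma~\ref{lem:gen_set_forest}. This is a valid and clean synthesis of Lemmas~\ref{lem:gen_set_free} and~\ref{lem:gen_set_forest}: the end-faithful tree of a one-ended component is simultaneously a one-ended spanning forest, so both lemmas apply and together yield a locally finite generating set of $\cE^{\w}$ by finite cycles. The paper's proof is shorter and avoids Halin's theorem altogether; yours has the advantage of producing an explicit locally finite basis of $\cE^{\w}=\cE^{\f}$ indexed by $E(G)\setminus F$, which is closer in spirit to how the generating sets are actually used later in the paper. (A minor remark: the hypothesis already forces every component of $G$ to be infinite, so your treatment of finite components is superfluous, though harmless.)
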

\begin{proof}
Since $v^*$ is isolated, we only need to show that every bi-infinite path $p$ in $G$ belongs to $\cE^{\f}$. Since each connected component of $G$ is one-ended, there are infinitely many disjoint paths between the two ends of $p$. By summing the finite cycles created by these paths, we obtain $p$.
\end{proof}
\begin{remark}
If $G$ is one-ended and $v^*$ has infinite degree, then \cref{lem:end_dense} implies equality of the free and wired cycle spaces. However, if $G$ is one-ended and $v^*$ has finite non-zero degree, then $\cE^{\w} \neq \cE^{\f}$: there is no way to generate a ray starting from $v^*$ using only finite cycles.
\end{remark}

\subsection{Uniform even subgraphs of infinite graphs} \label{sec:proj}

We now turn to the question of defining and generating uniform even subgraphs of infinite graphs.
Let $G$ be a locally finite infinite graph and let $G^*$ be a graph obtained by connecting a subset of vertices of $G$ to a ghost vertex $v^*$. We emphasize that $G$ and $G^*$ are not assumed to be connected here (and each may have infinitely many finite or infinite connected components).

We aim to give a constructive definition of the free and wired uniform even subgraphs on $G^*$. As usual, this can be done via an exhaustion of $G^*$ by finite subgraphs of $G^*$. While this will be sufficient for our application to the Loop $O(1)$ model in \cref{sec:loop} in the free case (where the exhaustion will be by samples of the free FK-Ising on finite graphs, which increase as the finite graphs increase), our application in the wired case requires working within a larger ambient graph $\tilde G$ containing $G^*$ (since the wired FK-Ising decreases as the finite graph increases, when the configuration in the finite graph is viewed as a configuration on the infinite graph by assigning 1s to all edges outside the finite graph).


We begin by addressing the free case.
Let $\{G_n\}_{n \ge 1}$ be an increasing sequence of finite subgraphs of $G^*$ which exhaust $G^*$.
Let $\cE_n^{\f}$ denote the cycle space of $G_n$, and let $\cE^{\f}$ denote the free cycle space of $G^*$. Let $U_n^{\f}$ denote a uniformly chosen element in $\cE_n^{\f}$. We will see that $U_n^{\f}$ converges in distribution to an element $U^{\f}$ of $\cE^{\f}$ (whose law does not depend on the chosen exhaustion), which we will call the \textbf{free uniform even subgraph} of $G^*$.

Let us now consider the wired case. We could simply take an exhaustion $\{G_n\}$ of $G^*$ as in the free case, but as we have mentioned, we need to allow for a more general situation. Let $\tilde G$ be the graph obtained from $G$ by connecting \emph{all} vertices to the ghost vertex $v^*$ (so that $G^*$ is a spanning subgraph of $\tilde G$). Let $\{\tilde G_n\}_{n \ge 1}$ be an increasing sequence of finite subgraphs of $\tilde G$ which exhaust $\tilde G$. Let $\tilde G_n^{\w}$ be the graph obtained from $\tilde G$ by identifying all the vertices not in $\tilde G_n$ with the ghost vertex $v^*$ and then removing the self loops from $v^*$ (see \cref{rmk:ghost_boundary} for some subtleties related to this definition where the importance of identifying the boundary with the ghost vertex is discussed). Note again that we allow multiple edges in $\tilde G_n^{\w}$. Note also that the edge set of $\tilde G_n^{\w}$ can be seen as a subset of the edge set of $\tilde G$. We often also identify $\tilde G_n^{\w}$ with its edge set $E(\tilde G_n^{\w}) \subset E(\tilde G)$.
Let $G_n^{\w}$ be a spanning subgraph of $\tilde G_n^{\w}$ such that the sequence $\{E(G_n^{\w}) \cup (E(\tilde G) \setminus E(\tilde G_n^{\w}))\}_{n \ge 1}$ decreases to $G^*$. Note that one potential situation like this is when $G^* = \tilde G$ and $G_n^{\w}=\tilde G_n^{\w}$. We remark that the only role of $\tilde G$ and $\tilde G_n$ is to give a proper sense in which $G^{\w}_n$ decreases to $G^*$.\footnote{In our applications $G^{\w}_n$ will be a sample from the wired FK-Ising model on $\tilde G_n^{\w}$.}
Let $\cE_n^{\w}$ denote the cycle space of $G_n^{\w}$, and let $\cE^{\w}$ denote the wired cycle space of $G^*$. Let $U_n^{\w}$ denote a uniformly chosen element in $\cE_n^{\w}$. We will see that $U_n^{\w}$ converges in distribution to an element $U^{\w}$ of $\cE^{\w}$ (whose law does not depend on the choices of $\tilde G$, $\tilde G_n$ and $G_n^{\w}$), which we will call the \textbf{wired uniform even subgraph} of $G^*$.

Our goal is twofold: to show that the limits of $U_n^{\f}$ and $U_n^{\w}$ exist and to find a useful description of their limits $U^{\f}$ and $U^{\w}$. 
We start with a deterministic lemma which states that the projection of the finite cycle space on a fixed set converges to the projection of the infinite cycle space in a suitable sense.
For any $\omega \subset \{0,1\}^{E(\tilde G)}$ and $E \subset E(\tilde G)$, define the \textbf{projection} of $\omega$ onto $E$ to be $\{\omega_e\}_{e \in E} \in \{0,1\}^E$. For any $\cA \subset \{0,1\}^{E(\tilde G)}$, let $\mathsf{proj}(\cA, E)$ be the set of projections of all $\omega \in \cA$ onto $E$.
For $n\ge k$, denote
\[ \cE_{n,k}^{\f} := \mathsf{proj}(\cE_n^{\f},G_k) \qquad\text{and}\qquad \cE_{n,k}^{\w} := \mathsf{proj}(\cE_n^{\w},G_k^{\w}) .\]
Similarly, denote
\[ \cE_{\infty,k}^{\f} := \mathsf{proj}(\cE^{\f},G_k) \qquad\text{and}\qquad \cE_{\infty,k}^{\w} := \mathsf{proj}(\cE^{\w},G_k^{\w}) .\]

\begin{lemma}\label{lem:proj}
For any $k$ there exists an $N$ such that for all $n >N$, $$\cE^{\f}_{n,k} = \cE^{\f}_{\infty, k} \qquad\text{and}\qquad \cE^{\w}_{n,k} = \cE^{\w}_{\infty, k}.$$
\end{lemma}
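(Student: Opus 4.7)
My plan is to reduce both claimed equalities to statements about finitely many explicit generators, using the locally finite generating sets from \cref{lem:gen_set_GJ}. I fix such generating sets $\{C_i\}$ for $\cE^{\f}$ (consisting of finite cycles) and for $\cE^{\w}$ (consisting of finite cycles, bi-infinite paths, and rays from $v^*$). By local finiteness only finitely many generators $C_{i_1},\dots,C_{i_m}$ meet the finite edge set $E(G_k)$ (or $E(G_k^{\w})$). Since the projection onto a finite edge set is continuous and distributes over closed $\Z_2$-spans, I conclude that $\cE^{\f}_{\infty,k}$ (resp.\ $\cE^{\w}_{\infty,k}$) is exactly the $\Z_2$-span of the projections $C_{i_j}\cap E(G_k)$ (resp.\ $C_{i_j}\cap E(G_k^{\w})$).

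For the inclusion $\cE^{\f}_{\infty,k}\subset \cE^{\f}_{n,k}$ (and its wired analogue) for $n$ large I argue generator by generator. A finite cycle $C$ has finitely many edges, all of which lie in $E(G_n)$ (resp.\ $E(G_n^{\w})$) once $n$ is large enough; for the wired case this uses the elementary inclusion $E(G^*)\cap E(\tilde G_n^{\w})\subset E(G_n^{\w})$, which follows directly from $E(G^*)\subset A_n$. For a bi-infinite path or a ray from $v^*$ in $\cE^{\w}$, I truncate $C$ to a finite segment $p'=C|_{[a,b]}$ that contains $C\cap E(\tilde G_k^{\w})$ and whose endpoints $v_a,v_b$ lie outside $V(\tilde G_n)$; such $a,b$ exist since $C$ visits the finite set $V(\tilde G_n)$ only finitely often. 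The set $p'':=p'\cap E(\tilde G_n^{\w})$ is then an even subgraph of $\tilde G_n^{\w}$: at each vertex of $V(\tilde G_n)\setminus\{v^*\}$ it has the same (even) degree as $p'$, and the degree of $v^*$ in $\tilde G_n^{\w}$ equals the number of edges of $p'$ with exactly one endpoint in $V(\tilde G_n)\setminus\{v^*\}$, which is even by a parity-of-crossings argument since both endpoints $v_a,v_b$ lie outside $V(\tilde G_n)\setminus\{v^*\}$. Since $p''\subset E(G^*)\cap E(\tilde G_n^{\w})\subset E(G_n^{\w})$, I obtain $p''\in\cE_n^{\w}$, and its projection onto $E(G_k^{\w})$ is precisely $C\cap E(G_k^{\w})$ by construction.

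The reverse inclusion $\cE^{\f}_{n,k}\subset \cE^{\f}_{\infty,k}$ is immediate because any even subgraph of $G_n$ is a finite even subgraph of $G^*$ and hence a finite sum of finite cycles in $\cE^{\f}$. The wired analogue $\cE^{\w}_{n,k}\subset\cE^{\w}_{\infty,k}$ will be more delicate: decomposing $\omega\in\cE_n^{\w}$ into simple cycles of $G_n^{\w}$, simple cycles not passing through $v^*$ lie in $E(G)\subset E(G^*)$ and already belong to $\cE^{\w}$, while a simple cycle through $v^*$ lifts under the gluing to a finite path in $\tilde G$ with endpoints in $\{v^*\}\cup(V(G)\setminus V(\tilde G_n))$. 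I would then extend such a path within $E(G^*)$ by attaching suitable rays or bi-infinite continuations (again provided by \cref{lem:gen_set_GJ}) chosen to avoid $V(\tilde G_k)\setminus\{v^*\}$, producing an element of $\cE^{\w}$ with the same projection onto $E(G_k^{\w})$. I expect this extension step, which amounts to pairing up the odd-degree vertices that appear outside $V(\tilde G_k)$ after restriction and connecting them through the ends of $G^*$ or through the ghost vertex, to be the main technical hurdle in the wired direction; once it is granted, the two inclusions combine to give the lemma.
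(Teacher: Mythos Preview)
Your treatment of the free case is fine and, in fact, is a small variant of the paper's argument: you use a fixed locally finite generating set of finite cycles and observe that only finitely many of them meet $E(G_k)$, whereas the paper argues by approximating an arbitrary element of $\cE^{\f}$ by finite sums of cycles. Both are correct and essentially equivalent.

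For the wired case, your proof of the inclusion $\cE^{\w}_{\infty,k}\subset\cE^{\w}_{n,k}$ is also correct and close in spirit to the paper's (the paper simply notes that the projection of any cycle, bi-infinite path, or ray from $v^*$ onto $G_n^{\w}$ is a finite union of cycles there, using the same monotonicity inclusion $E(G^*)\cap E(\tilde G_n^{\w})\subset E(G_n^{\w})$).

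The gap is in the reverse wired inclusion $\cE^{\w}_{n,k}\subset\cE^{\w}_{\infty,k}$. You correctly identify this as the hard direction, but your proposed ``extension'' of a lifted path to an element of $\cE^{\w}$ is not justified and, as stated, need not be possible. After lifting a simple cycle of $G_n^{\w}$ through $v^*$, you obtain a path whose loose endpoint $u$ lies in $V(G)\setminus V(\tilde G_n)$, and you want to attach a ray in $G^*$ from $u$ avoiding $V(\tilde G_k)$. But $u$ may lie in a component of $G^*$ (or of $G^*\setminus E(G_k^{\w})$) that contains no ray and no path to $v^*$; Lemma~3.2 supplies generators of $\cE^{\w}$, not rays through prescribed vertices. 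More generally, after restricting $\omega$ to $E(G^*)$ you are left with a finite set $S$ of odd-degree vertices outside $V(\tilde G_n)$, and you would need a subgraph of $G^*\setminus E(G_k^{\w})$ whose odd-degree set (in $V(G)$) is exactly $S$; the obstruction is that $|S\cap D|$ must be even for every finite component $D$ of $G^*\setminus E(G_k^{\w})$ not containing $v^*$, and nothing in your argument forces this for a fixed $n$.

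The paper sidesteps all of this with a short compactness argument: if the inclusion failed for infinitely many $n$, then (since $\{0,1\}^{E(G_k^{\w})}$ is finite) some fixed $B$ would lie in $\cE^{\w}_{n,k}\setminus\cE^{\w}_{\infty,k}$ along a subsequence $n_i\to\infty$; choosing witnesses $H_i\in\cE^{\w}_{n_i}$ projecting to $B$ and passing to a subsequential limit $H$ yields $H\subset G^*$ (because $G_n^{\w}\to G^*$) with even degree at every $v\neq v^*$ (because degrees are locally constant along the limit), hence $H\in\cE^{\w}$ with $\mathsf{proj}(H,G_k^{\w})=B$, a contradiction. This avoids any explicit construction and is the step you should replace your extension sketch with.
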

\begin{proof}
Let us first tackle the free case. Since any element in $\cE^{\f}_{n}$ is a finite sum of finite cycles in $G_n$, it must be in $\cE^{\f}$ since $G_n$ is a subgraph of $G^*$. Consequently, its projection to $G_k$ is in $\cE^{\f}_{\infty, k}$, that is, $\cE^{\f}_{n,k} \subseteq \cE^{\f}_{\infty,k}$ for all $ n \ge k \ge 1$. Conversely, take an element $H$ in $\cE^{\f}_{\infty,k}$ and suppose $H = \mathsf{proj} (\tilde H, G_k)$ for some $\tilde H \in \cE^{\f}$. Let $\tilde H_m \to \tilde H$ where $\tilde H_m$ is a finite sum of finite cycles in $G^*$. This implies that $\mathsf{proj}(\tilde H_m, G_k)$ is the same for all $m \ge m_0(H)$. Since $H_{m_0}$ is a sum of finitely many finite cycles, it must be in $\cE^{\f}_{n,k}$ for some $n \ge N(H)$. Since $\cE^{\f}_{\infty, k}$ contains only finitely many elements, we can choose an $N=N(k)$ such that $\cE^{\f}_{\infty,k} \subseteq \cE^{\f}_{n,k}$ for all $n \ge N$.  

Now let us turn to the wired case, which is a bit more involved. We rely on the monotonicity of $G_n^{\w}$ which tells us that
\[ E(G^*) \cap  E(\tilde G_n^{\w}) \subset E(G_n^{\w}) \qquad\text{for any }n .\]
It is easy to see that the projection of any finite cycle or bi-infinite path in $G^*$ onto $G_n^{\w}$ is in $\cE^{\w}_n$. Indeed, when such a cycle or path enters or leaves $\tilde G_n^{\w}$, it does so through the wired vertex, and these edges are all in $G_n^{\w}$ by the above displayed. Thus the projection results in a union of finitely many cycles in $G_n^{\w}$ which is in $\cE_n^{\w}$. The same argument works for a ray starting at $v^*$.
Thus, $\mathsf{proj}(\cE^{\w},G_n^{\w}) \subset \cE_n^{\w}$, and hence $\cE^{\w}_{\infty,k} \subseteq \cE^{\w}_{n,k}$ for all $n \ge k \ge 1$.

Now we prove the reverse direction, i.e., $\cE^{\w}_{\infty,k} \supseteq \cE^{\w}_{n,k}$ for all $n \ge N(k)$. Assume the converse: $\cE^{\w}_{n,k} \setminus \cE^{\w}_{\infty, k} \neq \emptyset$ for infinitely many $n$. By compactness, there is some $B$ which belongs to $\cE^{\w}_{n,k} \setminus \cE^{\w}_{\infty, k}$ for infinitely many $n$. Choose integers $n_i\to\infty$ and elements $H_i \in \cE^{\w}_{n_i}$ such that $\mathsf{proj}(H_i,G_k^{\w})=B$. By compactness, $H_i$ has a subsequential limit $H$. Clearly, $\mathsf{proj}(H,G_k^{\w})=B$ and $H \subset G^*$ since $G^{\w}_n \to G^*$. Thus, it suffices to show that $H \in \cE^{\w}$, as this will lead to a contradiction to the fact that $B \notin \cE^{\w}_{\infty,k}$. To this end, we fix $v \neq v^*$ and show that the degree of $v$ in $H$ is even. Indeed, $H_i$ is an even subgraph of $G_{n_i}^{\w}$, so the degree of $v$ in it is even for all $i$. Since $v$ has finite degree in $\tilde G$, it follows that its degree is even also in the limit $H$. This completes the proof. 
\end{proof}

Let $\cC^{\f} = (C^{\f}_i)_{i \ge 1}$ be a locally finite generating set for $\cE^{\f}$ consisting of only finite sets $C^{\f}_i$, and let $\cC^{\w} = (C^{\w}_i)_{i \ge 1}$ be any locally finite generating set for $\cE^{\w}$. Such generating sets exist by \cref{lem:gen_set_GJ}. Let $\{\ve_i\}_{i \ge 1}$ be i.i.d.\ Bernoulli$(1/2)$ random variables and define 
\begin{equation}
U^{\f} := \sum_{i \ge 1}\ve_i C^{\f}_i \qquad  \text{and} \qquad U^{\w} = \sum_{i \ge 1}\ve_i C^{\w}_i .\label{eq:U}
\end{equation}
Note that these sums exist by local finiteness. Recall that $U_n^{\f}$ (resp.\ $U_n^{\w}$) denotes a uniform element in $\cE_n^{\f}$ (resp.\ $\cE_n^{\w}$).
\begin{prop}[free and wired uniform even subgraphs]\label{prop:limit_projection}
There exists a coupling between $\{U_n^{\f}\}_{n \ge 1}$  and $U^{\f}$ such that for every $k$, there exists an $N$ such that $\proj(U_n^{\f}, G_k) = \proj (U^{\f},G_k)$ for all $n \ge N$ almost surely. In particular, $U^{\f}_n \to U^{\f}$ almost surely and the law of $U^{\f}$ does not depend on the generating set used in its definition. The same holds for $(G^{\w}_n,U_n^{\w},U^{\w})$ in place of $(G_n,U_n^{\f},U^{\f})$.
\end{prop}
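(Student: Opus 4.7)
The plan is to first establish that for each $k$ the projection $\proj(U^{\f}, G_k)$ is uniformly distributed on $\cE^{\f}_{\infty, k}$ (and similarly $\proj(U^{\w}, G^{\w}_k)$ is uniform on $\cE^{\w}_{\infty, k}$), and then combine this with Lemma \ref{lem:proj} to produce the desired coupling. The almost sure convergence and independence of the limit law from the generating set will then follow as easy consequences.

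For uniformity of $\proj(U^{\f}, G_k)$, I will use local finiteness of the generating set $\cC^{\f}$: since only finitely many $C^{\f}_i$ meet $G_k$, the projection equals the finite sum $\sum_{i:C^{\f}_i \cap G_k \neq \emptyset} \ve_i \proj(C^{\f}_i, G_k)$ with independent Bernoulli$(1/2)$ coefficients. These projected generators span $\cE^{\f}_{\infty, k}$: any element of $\cE^{\f}$ is a pointwise limit of finite sums of the $C^{\f}_i$, and by local finiteness such sums stabilize under projection to $G_k$, so the projection of the limit is itself a finite $\Z_2$-combination of projected generators. Lemma \ref{prop:cycle_finite} then yields the uniform law. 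The wired case is handled identically, local finiteness again reducing things to a finite sum even though some generators $C^{\w}_i$ may be bi-infinite paths or rays from $v^*$.

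To construct the coupling, I will choose a non-decreasing sequence $K(n) \to \infty$ with $K(n) \le n$ and $n \ge N(K(n))$, using the $N$ provided by Lemma \ref{lem:proj}. Conditional on $U^{\f}$, I then sample $U^{\f}_n$ (independently across $n$) uniformly from the coset $\{w \in \cE^{\f}_n : \proj(w, G_{K(n)}) = \proj(U^{\f}, G_{K(n)})\}$, which is nonempty because $\proj(U^{\f}, G_{K(n)}) \in \cE^{\f}_{\infty, K(n)} = \cE^{\f}_{n, K(n)}$. A short counting check, using that $\proj(U^{\f}, G_{K(n)})$ is already uniform on $\cE^{\f}_{\infty, K(n)}$, confirms that the marginal law of $U^{\f}_n$ is uniform on $\cE^{\f}_n$ as required; by construction the projections agree on $G_k$ whenever $n \ge N(k)$, yielding almost sure pointwise convergence $U^{\f}_n \to U^{\f}$. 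Independence of the law of $U^{\f}$ from the choice of generating set then follows since its finite-dimensional marginals are uniquely characterized as the uniform measures on the spaces $\cE^{\f}_{\infty, k}$.

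The wired case proceeds in exactly the same way. The one extra point requiring care is that edges in $E(G^{\w}_n) \setminus E(G^*)$ must eventually vanish for the pointwise limit $U^{\w}_n \to U^{\w}$ to make sense on $E(\tilde G)$, but this is precisely what the hypothesis $E(G^{\w}_n) \cup (E(\tilde G) \setminus E(\tilde G^{\w}_n)) \downarrow E(G^*)$ guarantees. I expect the main technical point to be verifying that the projected generators span $\cE^{\f}_{\infty, k}$ (and its wired analogue) — this is where one has to combine the definition of $\overline{\mathsf{span}}$ with local finiteness, noting that although $\cE^{\f}$ is defined via a closure, on projection to $G_k$ the closure collapses to ordinary span thanks to the finite range of nontrivial contributions.
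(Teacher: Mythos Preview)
Your proof is correct, but takes a different route from the paper's. The paper constructs the coupling explicitly in terms of the generating set: it shows that the finite subcollection $\cC_{k(n)} := \{C^{\f}_i : C^{\f}_i \cap G_{k(n)} \neq \emptyset\}$ (for a suitable $k(n)\to\infty$) can be extended to a generating set of $\cE^{\f}_n$ by adding elements that are disjoint from $G_{k(n)}$, and then defines $U^{\f}_n$ by reusing the \emph{same} Bernoulli coins $\ve_i$ on $\cC_{k(n)}$ and fresh coins on the added generators; this forces $\proj(U^{\f}_n, G_{k(n)}) = \proj(U^{\f}, G_{k(n)})$ directly. Your argument is more abstract: you first identify the marginal law of $\proj(U^{\f}, G_k)$ as uniform on $\cE^{\f}_{\infty,k}$ (via \cref{prop:cycle_finite} and local finiteness), and then build each $U^{\f}_n$ by conditional fiber sampling, using only that the projection $\cE^{\f}_n \to \cE^{\f}_{n,K(n)}$ is a surjective linear map with equal-sized cosets. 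The paper's approach is more constructive and keeps all $U^{\f}_n$ measurable with respect to a single i.i.d.\ source $(\ve_i,\eta_j)$; your approach bypasses the technical step of extending generating sets while keeping the extension disjoint from $G_{k(n)}$, and the independence of the law of $U^{\f}$ from the choice of $\cC^{\f}$ drops out immediately from your characterization of the finite-dimensional marginals. Both rest on \cref{lem:proj} and \cref{prop:cycle_finite} in essentially the same way.
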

\begin{proof}
Let us first prove the free case. Let $\cC_k$ be the elements in $\cC^{\f}$ which intersect $G_k$. Given $n$, let $k(n)$ be the largest $k$ such that every element in $\cC_k$ lies completely inside $G_n$.
We claim that $\cC_{k(n)}$ can be extended to a generating set $\cC_{k(n)} \cup \{C_{n,j}\}_{1 \le j \le \ell_n}$ for $\cE^{\f}_n$ so that the added elements $\{C_{n,j}\}_j$ do not intersect $G_{k(n)}$. Let us first explain how to deduce the proposition from this. Let $\{\eta_j\}_{j \ge 1}$ be i.i.d.\ Bernoulli$(1/2)$ random variables, independent of $\{\ve_i\}_i$. Define
\[ U^{\f}_{n} := \sum_{i : C^{\f}_i \in \cC_{k(n)}} \ve_i C^{\f}_i + \sum_{j=1}^{\ell_n} \eta_j C_{n,j} .\]
By \cref{prop:cycle_finite}, $U^{\f}_{n}$ is a uniform element in $\cE^{\f}_n$. Also $\mathsf{proj}(U_n, G_{k(n)}) = \mathsf{proj}(U, G_{k(n)})$ almost surely. Noting that $k(n)\to\infty$ as $n\to\infty$ (since $\cC_k$ consists of finitely many finite sets), the proposition follows.

We now prove the existence of $\{C_{n,j}\}_j$ as above. Fix $n$ and set $k:=k(n)$.
Given $H \in \cE^{\f}_{n}$, let $\psi(H)$ be any element in $\mathsf{span}(\cC_k) \subset \cE^{\f}_{n}$ with $\proj(\psi(H), G_k) = \proj(H, G_k)$. Such an $\psi(H)$ exists since $\cC^{\f}$ is a locally finite generating set for $\cE^{\f}$, and $H \in \cE^{\f}_n \subset \cE^{\f}$ (take a set of elements in $\cC^{\f}$ which sums to $H$, and keep only those which intersect $G_k$; alternatively, one could use \cref{lem:proj} and increase $n$ appropriately). Now define $$\cD := \{H \oplus \psi(H): H \in \cE^{\f}_{n}\}.$$
Notice that the elements of $\cD$ are all disjoint from $G_k$. Also, $\cC_k \cup \cD$ generates $\cE^{\f}_n$ as any $H \in \cE^{\f}_n$ can be written as $\psi(H) \oplus (H\oplus H')$ and $\psi(H) \in \mathsf{span}(\cC_k)$ and $H\oplus \psi(H) \in \cD$. This proves the claim with $\{C_{n,j}\}_j = \cD$.

The proof for the wired case is very similar.
Let $\cC_k$ be the elements in $\cC^{\w}$ which intersect $G^{\w}_k$.
Given $n$, let $k(n)$ be the largest $k$ such that $\cE^{\w}_{n,k} = \cE^{\w}_{\infty, k}$. Note that $k(n)\to\infty$ as $n\to\infty$ by \cref{lem:proj}. It suffices to show that $\mathsf{proj}(\cC_{k(n)},G_n^{\w})$ can be extended to a generating set $\cC_{k(n)} \cup \{C_{n,j}\}_{1 \le j \le \ell_n}$ for $\cE^{\w}_n$ so that the added elements $\{C_{n,j}\}_j$ do not intersect $G^{\w}_{k(n)}$, as a desired coupling can then be defined as in the free case. This follows as in the free case (using the trivial fact that projections and summations commute).
\end{proof}

In light of \cref{prop:limit_projection}, in order to prove that $U^{\f}$ or $U^{\w}$ is a factor of i.i.d., we need to construct a locally finite generating set for $\cE^{\f}$ or $\cE^{\w}$ in an invariant manner (as a factor of i.i.d.).  We now record the distributional versions of  \cref{lem:end_dense,cor:1end}. Recall the definition of end dense from before \cref{lem:end_dense}.
\begin{lemma}\label{lem:wired=free}
If $G^*$ is end dense, then $U^{\w} = U^{\f}$ in law. If each component of $G$ is one-ended and $v^*$ is isolated in $G^*$, then $U^{\w} = U^{\f}$ in law.
\end{lemma}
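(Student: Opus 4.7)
The plan is to deduce this distributional identity directly from the deterministic equalities of cycle spaces established in \cref{lem:end_dense,cor:1end}, combined with the fact from \cref{prop:limit_projection} that the law of $U^{\w}$ (respectively $U^{\f}$) does not depend on the choice of locally finite generating set used in its definition via~\eqref{eq:U}.

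More precisely, under either hypothesis we have $\cE^{\f} = \cE^{\w}$, by \cref{lem:end_dense} in the first case and by \cref{cor:1end} in the second. By \cref{lem:gen_set_GJ}, we may choose a locally finite generating set $\cC^{\f} = (C^{\f}_i)_{i \ge 1}$ for $\cE^{\f}$ consisting of finite cycles. Since $\cE^{\f} = \cE^{\w}$, every element of $\cC^{\f}$ lies in $\cE^{\w}$, and $\cC^{\f}$ is also a (locally finite) generating set for $\cE^{\w}$. Hence $\cC^{\f}$ is a permissible choice of generating set in the definition of both $U^{\f}$ and $U^{\w}$.

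Taking $\cC^{\w} := \cC^{\f}$ and using the same i.i.d.\ Bernoulli$(1/2)$ sequence $\{\ve_i\}$ for both definitions in~\eqref{eq:U}, we obtain $U^{\f} = U^{\w}$ as random variables on this common probability space. Since the law of $U^{\w}$ does not depend on the chosen generating set by \cref{prop:limit_projection} (and similarly for $U^{\f}$), this equality in distribution holds for the original definitions as well. There is no real obstacle here; the content of the lemma lies in the deterministic \cref{lem:end_dense,cor:1end} together with the generating-set invariance of \cref{prop:limit_projection}, and the present lemma is essentially their probabilistic packaging.
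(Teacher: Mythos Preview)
Your proof is correct and follows the same approach as the paper: both reduce to the equality $\cE^{\f}=\cE^{\w}$ via \cref{lem:end_dense,cor:1end} and then invoke the generating-set invariance from \cref{prop:limit_projection}. You have simply spelled out explicitly the step of choosing a common generating set, which the paper leaves implicit in the phrase ``Hence we are done using \cref{prop:limit_projection}.''
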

\begin{proof}
It follows immediately from \cref{lem:end_dense,cor:1end} that in either of these cases, $\cE^{\w} = \cE^{\f}$.  Hence we are done using \cref{prop:limit_projection}.
\end{proof}

\begin{remark}\label{rmk:ghost_boundary}
We finish this section with a remark about the boundary condition for the wired case. Recall that in the definition of $G_n^{\w}$, we identified the boundary vertex with the ghost vertex. One might also define a boundary condition where this is not the case (the wired vertex and ghost vertex are distinct). In this setting, the limit of the wired even subgraphs include only those infinite rays in the generating set which belong to a dense end. We do not pursue these avenues in more details. 
 For our applications, the graphs in question will be FK-Ising clusters which are end dense for any positive external field, and have no ghost vertex if there is no external field, for which these two definitions coincide. 
 \end{remark}

\subsection{Loop $O(1)$ model on infinite graphs}\label{sec:loop}
Fix a locally finite infinite connected graph $G = (V,E)$ and let $\tilde G$ be the graph where {\emph {all}} the vertices in $G$ are connected to a ghost vertex $v^*$ as defined in the previous section.
Recall that configuration in the Loop $O(1)$ and FK-Ising model are elements of $\{0,1\}^{E \cup V}$. There is an obvious identification between such elements and spanning subgraphs of $\tilde G$: we identify $\omega \in \{0,1\}^{E \cup V}$ with the spanning subgraph $\omega^*$ whose edge set is $\{e \in E: \omega_e =1\} \cup \{\{v,v^*\}: v \in V, \omega_v =1\}$. Note that for the wired measures, $\bP_{G_n^{\w},x,y}^{\w}$ and $\phi_{G_n^{\w},p,p_h}^{\w}$, we further identify the ghost vertex with the boundary vertex $\Delta$ (see \cref{rmk:ghost}).

In particular, given $\omega \in \{0,1\}^{E \cup V}$, this allows us to talk about the wired and free uniform even subgraphs of $\omega$ (where $\omega^*$ plays the role of $G^*$ in \cref{sec:proj}).
Note also that an even percolation configuration $\eta \in \{0,1\}^{E \cup V}$ (recall the definition from \cref{sec:introduction}) precisely corresponds to an element $\eta^*$ in the wired cycle space $\cE^{\w}$ of $\tilde G$.

We present a corollary of \cref{prop:limit_projection} (together with \cref{prop:coupling}) in the context of the Loop $O(1)$ model. Let $G_n$ be an increasing sequence of induced finite subgraphs which exhaust $G$. Let $G_n^{\w}$ denote the graph obtained by identifying all the vertices outside $G_n$ into a single vertex and removing all the self loops (as before, this graph may have multiple edges). Recall the free and wired FK-Ising measures, $\phi^{\f}_{G,p,p_h}$ and $\phi^{\w}_{G,p,p_h}$, from \cref{sec:FK_ising}. 

\begin{prop}\label{prop:loop_a.s._conv}
Let $\beta,h,x,y,p,p_h$ be such that $x = \tanh(\beta)$, $y = \tanh(h)$, $p = 1-e^{-2\beta}$ and $p_h = 1-e^{-2h}$.
Let $\omega^{\f}$ be sampled from $\phi^{\f}_{G,p,p_h}$ and let $\eta^{\f}$ be a sample of the free uniform even subgraph of $\omega^{\f}$. Then $\bP^{\f}_{G_n,x,y}$ converges to $\bP^{\f}_{G,x,y}$, which is the distribution of $\eta^{\f}$.
A similar statement holds for the wired case.
\end{prop}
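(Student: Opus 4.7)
The plan is to combine \cref{prop:coupling}, which realizes a finite-volume Loop $O(1)$ configuration as a uniform even subgraph of an FK-Ising configuration, with \cref{prop:limit_projection}, which controls the almost sure limit of uniform even subgraphs along an exhaustion, together with the standard monotone (FKG) coupling of FK-Ising on an exhaustion of $G$. A preliminary check: with $x=\tanh\beta$ one has $p = 2x/(1+x) = 1-e^{-2\beta}$, and similarly $p_h = 1-e^{-2h}$, so the parameters in the hypothesis match those of \cref{prop:coupling}.

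For the free case, first apply \cref{prop:coupling} on $G_n$ with $B = \emptyset$: a sample $\eta_n \sim \bP^{\f}_{G_n,x,y}$ can be generated by drawing $\omega_n \sim \phi_{G_n,p,p_h}$ and then choosing, conditional on $\omega_n$, a uniform even subgraph of the enhanced graph $\omega_n^*$ obtained by attaching the ghost vertex $v^*$ to every vertex $v$ with $\omega_n(v)=1$. Using FKG, couple the sequence $\{\omega_n\}$ so that it is increasing and $\omega_n \uparrow \omega^{\f}$ almost surely, where $\omega^{\f}\sim \phi^{\f}_{G,p,p_h}$; in particular $\{\omega_n^*\}$ is a finite exhaustion of $(\omega^{\f})^*$. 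Conditional on $\omega^{\f}$, apply the free case of \cref{prop:limit_projection} to $(\omega^{\f})^*$ along this exhaustion to obtain a coupling in which $\eta_n \to \eta^{\f}$ almost surely, where $\eta^{\f}$ is the free uniform even subgraph of $(\omega^{\f})^*$. This identifies the weak limit $\bP^{\f}_{G,x,y}$ of $\bP^{\f}_{G_n,x,y}$ with the law of $\eta^{\f}$, as required.

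For the wired case the argument is the same up to the obvious modifications. Apply \cref{prop:coupling} on $G_n^{\w}$ with $B = \{\Delta\}$, identifying $\Delta$ with the ghost vertex $v^*$ as in \cref{rmk:ghost}, to obtain $\eta_n \sim \bP^{\w}_{G_n^{\w},x,y}$ as a uniform configuration in the wired cycle space of $\omega_n^*$, where $\omega_n \sim \phi^{\w}_{G_n^{\w},p,p_h}$. Now use the standard decreasing coupling of wired FK-Ising: view each $\omega_n$ as a configuration on the ambient graph $\tilde G$ (in which $v^*$ is connected to every vertex) by declaring all edges outside $\tilde G_n^{\w}$ to be open, and apply FKG to obtain a monotone coupling in which $\omega_n \searrow \omega^{\w}$ almost surely, with $\omega^{\w}\sim \phi^{\w}_{G,p,p_h}$. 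This places us precisely in the framework of \cref{sec:proj}, with $\omega_n^*$ playing the role of $G_n^{\w}$ and $(\omega^{\w})^*$ the role of $G^*$, so the wired case of \cref{prop:limit_projection} yields an almost sure coupling $\eta_n \to \eta^{\w}$, identifying the weak limit $\bP^{\w}_{G,x,y}$ with the law of $\eta^{\w}$.

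I expect the main obstacle to be the bookkeeping in the wired case: one must be careful to identify the boundary vertex $\Delta$ with the ghost vertex $v^*$ consistently throughout, and to verify that the decreasing FK-Ising coupling yields the monotonicity condition demanded by the setup of \cref{sec:proj}, namely that $E(\omega_n^*) \cup (E(\tilde G) \setminus E(\tilde G_n^{\w}))$ decreases to $E((\omega^{\w})^*)$. The free case is comparatively clean because $\omega_n^*$ genuinely exhausts $(\omega^{\f})^*$ from within and no ambient graph is needed.
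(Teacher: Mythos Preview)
Your proposal is correct and follows essentially the same route as the paper: both use the monotone FKG coupling of FK-Ising along the exhaustion, invoke \cref{prop:coupling} to realize $\eta_n$ as a uniform even subgraph of the FK-Ising configuration, and then apply \cref{prop:limit_projection} to pass to the limit. Your closing paragraph correctly identifies the only real subtlety, the wired bookkeeping, and the verification you describe there is exactly what is needed.
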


\begin{proof}

It is a standard fact using monotonicity that the probability measures $\phi^{}_{G_n,p,p_h}$ stochastically increase in $n$ and converge to a limiting measure $\phi^{\f}_{G,p,p_h}$ called the \emph{free} FK-Ising model on $G$. Similarly, $\phi_{G^{\w}_n,p,p_h}^{\w}$ is a stochastically decreasing sequence which converges to the \emph{wired} FK-Ising measure $\phi^{\w}_{G,p,p_h}$ on $G$. Also, for each $n$, the wired (resp.\ free) measure is the largest (resp.\ smallest) possible FK-Ising measure in the sense of stochastic domination.
Viewing $\phi^{}_{G_n,p,p_h}$ (resp.\ $\phi_{G^{\w}_n,p,p_h}^{\w}$) as a probability measure on $\{0,1\}^{E\cup V}$ which is deterministically 0 (resp.\ 1) on all vertices and edges outside of $G_n$, we obtain a coupling between $(\omega^{\f},\omega^{\w},(\omega_n^{\f},\omega_n^{\w})_{n \ge 1})$, where $\omega^{\f}_n \sim \phi^{}_{G_n,p,p_h} $, $\omega_n^{\w} \sim \phi^{\w}_{G_n,p,p_h}$,  $\omega^{\f}\sim \phi^{\f}_{G,p,p_h} $ and $\omega^{\w} \sim \phi^{\w}_{G_n,p,p_h}$ under which $\omega^{\f}_n \le \omega^{\f}_{n+1} \le  \omega^{\f} \le \omega^{\w} \le \omega_{n+1}^{\w} \le \omega_n^{\w}$ (in the pointwise partial order) for all $n$ almost surely (see the proof of \cite[Theorem 4.91]{grimmett2006random} for details on why such a coupling exists).

Let us now turn to the Loop $O(1)$ model. Consider first the free case.
Let $\eta_n$ be a uniform even subgraph of $\omega^{\f}_n$ conditionally on $((\omega^{\f}_n)_n,\omega^{\f})$.
By \cref{prop:coupling}, the (unconditional) distribution of $\eta_n$ is $\bP^{\f}_{G_n,x,y}$.
By \cref{prop:limit_projection}, conditionally on $((\omega^{\f}_n)_n,\omega^{\f})$, $\eta_n$ converges in distribution to $\eta^{\f}$ (using $G_n=\omega^{\f,*}_n$ and $G^*=\omega^{\f,*}$). In particular, $\eta_n$ converges in distribution to $\eta^{\f}$. This completes the proof for the free case.

We now turn to the wired case.
Let $\eta_n^{\w}$ be a uniform even subgraph of $\omega^{\w}_n$.
By \cref{prop:coupling} (see \cref{rmk:ghost}), the (unconditional) distribution of $\eta^{\w}_n$ is $\bP^{\w}_{G_n^{\w},x,y}$ conditionally on $((\omega^{\w}_n)_n,\omega^{\w})$.
By \cref{prop:limit_projection}, conditionally on $((\omega^{\w}_n)_n,\omega^{\w})$, $\eta^{\w}_n$ converges in distribution to $\eta^{\w}$ (using $G^{\w}_n=\omega^{\w,*}_n$ and $G^*=\omega^{\w,*}$ and that $\omega^{\w}_n$ decreases to $\omega^{\w}$). In particular, $\eta^{\w}_n$ converges in distribution to $\eta^{\w}$. This completes the proof for the wired case.
%
\end{proof}

\begin{remark}
It was previously known that the free and wired Loop $O(1)$ model converges in law (see \cite[Theorem 2.3]{aizenman2015random}). However the proof goes through the convergence of correlation functions in the gradient Ising model, which is only related to the Loop $O(1)$ model in the distributional sense. An inspection of the proof of \cref{prop:loop_a.s._conv} provides an alternative proof of this convergence via a reasonably explicit coupling. In fact, once we sample the FK-Ising, the coupling of \cref{prop:limit_projection} shows that the projection of the Loop $O(1)$ on a fixed set of edges stabilizes for $n >N$ for a choice of $N$ depending only on the FK-Ising sample.
\end{remark}

\begin{lemma}\label{lem:loopO1end_dense}
Let $p \in [0,1]$ and $p_h \in (0,1]$. Let $\omega^{\f} \sim \phi^{\f}_{G,p,p_h}$ and $\omega^{\w} \sim \phi^{\w}_{G,p,p_h} $. Then both $\omega^{\f,*}$ and $\omega^{\w, *}$ are almost surely end dense.
\end{lemma}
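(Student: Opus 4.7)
The plan will have three steps: identifying the conditional law of the vertex marks given the edges, using Borel--Cantelli across end-components, and building a dense ray in each end via an ``out-and-back'' detour.

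First, denote by $\omega_E$ (resp.\ $\omega_V$) the restriction of $\omega$ to $E$ (resp.\ $V$), and let $S=\{v:\omega(v)=1\}$. From the explicit FK-Ising weight~\eqref{eq:RC}, conditionally on $\omega_E$, the density of $\omega_V$ factorizes across the clusters of $\omega_E$, with each cluster $C$ contributing $p_h^{|S\cap C|}(1-p_h)^{|C\setminus S|}\cdot 2^{\mathbf{1}_{\{S\cap C=\emptyset\}}}$. A direct computation then gives $\P(S\cap C=\emptyset\mid\omega_E)=\frac{2(1-p_h)^{|C|}}{1+(1-p_h)^{|C|}}$, which vanishes for $|C|=\infty$. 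Passing to the infinite-volume limit along the coupling from \cref{prop:loop_a.s._conv}, I would obtain that for every infinite cluster $C$ of $\omega^{\f}_E$ (resp.\ $\omega^{\w}_E$), the marks $\{\omega(v)\}_{v\in C}$ are conditionally i.i.d.\ $\Bern(p_h)$ given $\omega_E$.

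Next, I would fix an infinite cluster $C$ and an exhaustion $F_1\subset F_2\subset\cdots$ of $C$ by finite sets. Since $C$ is locally finite, the set $\mathcal D_n$ of infinite connected components of $C\setminus F_n$ is finite for each $n$, and each $D\in\mathcal D_n$ is itself infinite. Borel--Cantelli applied to the i.i.d.\ $\Bern(p_h)$ marks on $D$ shows that each such $D$ contains infinitely many marked vertices almost surely, and a countable union bound over $\bigcup_n\mathcal D_n$ then yields that, almost surely, every end-component at every level contains infinitely many marks.

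Finally, to exhibit a dense ray in an end $e$ of $C$ with nested end-components $(D_n(e))_n$, fix any reference ray $\rho_e=(u_0,u_1,\ldots)\in e$. If $\rho_e$ itself visits infinitely many marked vertices it is already dense, so assume infinitely many marks lie in $\bigcup_n D_n(e)\setminus\rho_e$. I would then build the desired ray inductively: follow $\rho_e$ from $u_0$ to some $u_{i_k}$ (with $i_k\to\infty$), detour via a simple path inside $D_{i_k}(e)$ to a marked vertex $v_k\notin\rho_e$, retrace the detour back to $u_{i_k}$, and continue along $\rho_e$ to $u_{i_{k+1}}$. The crucial point is that rays are required to be \emph{edge-disjoint} rather than vertex-disjoint: the outward and backward traversals of a detour use each undirected edge in opposite directions, contributing two \emph{distinct} directed edges, so concatenating the forward edges of $\rho_e$ with the detour edges still yields a legitimate ray. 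The main technical challenge is arranging the detours to be pairwise edge-disjoint and disjoint from $\rho_e$; this can be done by choosing each $v_k$ in a sub-branch of $D_{i_k}(e)$ off $\rho_e$ at $u_{i_k}$ that has not yet been used, which is possible thanks to the abundance of marks from Step 2. The resulting ray lies eventually in $D_n(e)$ for every $n$, hence belongs to end $e$, and visits every $v_k$, hence is dense.
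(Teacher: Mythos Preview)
Your overall strategy is sound and close to the paper's, but Step~1 is both more elaborate than needed and not quite justified as written. The paper's argument is a single line: by finite energy and Holley's criterion, conditionally on $\omega_E$ the vertex marks stochastically dominate i.i.d.\ $\Bern(p_h/(2-p_h))$ (since the single-site conditional probability $\P(\omega(v)=1\mid\text{rest})$ is always at least $p_h/(p_h+2(1-p_h))$). This domination passes to the infinite-volume limit trivially and is all you need for Steps~2 and~3. By contrast, your claim that the marks on an infinite cluster are \emph{exactly} i.i.d.\ $\Bern(p_h)$, while true, is not obtained simply by ``passing to the infinite-volume limit along the coupling'': that coupling is a monotone coupling of the full FK configurations, not a statement about convergence of conditional laws of $\omega_V$ given $\omega_E$. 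Making your stronger claim rigorous essentially requires the DLR property of the limit together with a separate argument that infinite clusters are a.s.\ not entirely unmarked --- and the cleanest way to get the latter is precisely the Holley domination you are trying to bypass.

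Your Steps~2 and~3 correctly unpack what the paper leaves as ``easily follows''. Step~2 is fine. In Step~3 the out-and-back idea works because rays are edge-disjoint rather than vertex-disjoint, as you note; however, your description of choosing $v_k$ ``in a sub-branch off $\rho_e$ at $u_{i_k}$'' is slightly off. The detour need not branch off at $u_{i_k}$ specifically: the relevant marked vertex $v_k\in D_{i_k}(e)\setminus\rho_e$ lies in some component of $D_{i_k}(e)\setminus\rho_e$, which attaches to $\rho_e$ at some $u_j$ with $j\ge i_k$, and you should walk forward along $\rho_e$ to $u_j$ before detouring. To guarantee pairwise edge-disjointness of the detours, after completing the $k$th detour choose $i_{k+1}$ large enough that $D_{i_{k+1}}(e)$ is disjoint from all previous detour paths (each being finite); then the next detour lies entirely in $D_{i_{k+1}}(e)\setminus\rho_e$ and is automatically edge-disjoint from everything built so far. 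With this bookkeeping your construction goes through.
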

\begin{proof}
The proof for the free and the wired are the same, so we prove just the free case.
By finite energy and Holley's criterion (\cite[Theorem 2.3 (b)]{grimmett2006random}), conditionally on $\omega^* \cap E(G)$, we have that $\{\omega^*_{\{v,v^*\}}\}_{v \in V(G)}$ dominates a Bernoulli percolation with parameter $p_h/(2-p_h)$, from which the result easily follows.\end{proof}
We remark that \cref{lem:loopO1end_dense} will only be used in the free case.
\begin{lemma}\label{lem:tree_end_dense}
Let $G$ be a connected $k$-ended graph for some positive integer $k $ and let $T$ be a spanning forest of $G$ with each tree in it being infinite with finitely many ends. Let $p_h \in (0,1]$, $p \in [0,1]$ and $\omega^{\f} \sim \phi^{\f}_{G,p,p_h}$ and $\omega^{\w} \sim \phi^{\w}_{G,p,p_h} $. Then $T^*$ is almost surely end dense.
\end{lemma}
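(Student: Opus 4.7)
The plan is to reduce everything to the stochastic domination already extracted in the proof of \cref{lem:loopO1end_dense}. That argument observed that conditionally on $\omega \cap E(G)$, the family of ghost-edge indicators $\{\omega_v\}_{v \in V(G)}$ stochastically dominates i.i.d.\ Bernoulli$(q)$ with $q = p_h/(2-p_h)$, by finite energy and Holley's criterion. The key feature of this domination is that it makes no reference to the underlying subgraph whose rays are being examined, so it can be fed into a Borel--Cantelli argument along any deterministic sequence of distinct vertices of $G$---in particular, along rays coming from the externally prescribed forest $T$. Since $p_h \in (0,1]$ forces $q > 0$, the free and wired cases are handled in exactly the same way.

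The concrete steps I would carry out are as follows. First, enumerate the ends of $T$: by hypothesis every component of $T$ has only finitely many ends, and since $G$ is locally finite (hence countable) $T$ has at most countably many components, so the set of ends of $T$ is countable. For each end $\mathsf{e}$ of $T$ fix once and for all a deterministic representative ray $p_{\mathsf{e}} = (v_0, v_1, \ldots)$ in $T$. Applying the conditional Bernoulli domination to the deterministic sequence of distinct vertices $(v_i)$, Borel--Cantelli for the i.i.d.\ Bernoulli$(q)$ lower bound shows that, conditionally on $\omega \cap E(G)$, almost surely infinitely many $v_i$ satisfy $\omega_{v_i} = 1$; this property is preserved under the stochastic domination and then under integration of the conditioning. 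Hence almost surely the ray $p_{\mathsf{e}}$ contains infinitely many vertices adjacent to $v^*$ in $T^*$, which is exactly the definition of $\mathsf{e}$ being dense in $T^*$. A union bound over the countable set of ends of $T$ completes the free case, and the wired case is handled identically by running the same argument with $\omega^{\w}$ in place of $\omega^{\f}$.

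I do not anticipate a serious obstacle. The one mild subtlety is that $T$ is completely decoupled from the FK-Ising sample, so one has to be comfortable invoking the ghost-edge domination along a deterministic sequence of vertices chosen from an external forest rather than from $\omega$ itself; this is in fact the easiest setting for Borel--Cantelli, since the sequence does not depend on the random configuration, and it is precisely the finitely-many-ends-per-tree hypothesis that allows the final union bound to be taken over a countable family.
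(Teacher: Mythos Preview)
Your proposal is correct and follows essentially the same approach as the paper: the paper's proof simply says ``This proof is the same as in \cref{lem:loopO1end_dense}'', and you have spelled out precisely that argument---the conditional Bernoulli domination of the ghost indicators, applied via Borel--Cantelli along deterministic rays in $T$, together with a countable union bound over the ends of $T$. Your additional remarks on the countability of the ends and the deterministic nature of the chosen rays are accurate and make explicit what the paper leaves implicit.
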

\begin{proof}
This proof is the same as in \cref{lem:loopO1end_dense}. We leave the details to the reader.
\end{proof}

\section{Wired uniform spanning forest as a factor of i.i.d.}\label{sec:USFfactor}

In this section, we prove \cref{thm:UST} (restated below as \cref{USFfactor}).
Thus, we assume that $(G, \rho)$ is a transient, connected, random
rooted graph and aim to construct the wired uniform spanning forest (WUSF) on it as a graph factor of i.i.d.
The WUSF is a distributional limit of the wired (rooted) spanning tree $(T_n, \rho)$ of
any exhaustion of $(G, \rho)$ by finite subgraphs $(G_n,\rho)$.
This yields the triplet $(G,\rho,T)$, called the WUSF on $(G,\rho)$, which is a unimodular, random rooted marked graph. 

Given a transient $(G, \rho)$, the WUSF can be generated by
the celebrated Wilson's algorithm rooted at infinity.
We briefly describe this procedure (see \cite[Proposition
10.1]{lyons2017probability} for details).
A loop erased random walk is a random walk whose loops are erased
chronologically (we refer to \cite[Section 4.1]{lyons2017probability}
for a more precise definition).
Condition on $(G, \rho)$ and arbitrarily order the vertices of $G$.
Sample an independent random walks from each vertex of $G$ one by one in
order.
The loop erasure of the first walk will generate an infinite path
because $G$ is transient.
Run the second walk until it hits the first path or escapes to infinity,
and loop erase it.
Iterate this process until a path from every vertex is sampled.
It is known (see e.g.\ \cite{AHNR2}) that this creates a unimodular,
random rooted network $(G,\rho,T)$ where $T$ is
distributed as the WUSF on $G$.
In particular, the law of the resulting tree does not depend on the
order chosen for the vertices (though the resulting realization does).
Finally, we also rely on the related cycle popping algorithm, as
described below.

Each vertex has an arrow pointing uniformly at random to one of its
neighbors, except for one vertex which is fixed to be the sink.
If the resulting directed graph contains an oriented cycle, ``pop'' it
by choosing new independent arrows at each vertex of the cycle.
Repeat until no directed cycle exists.
This results in a uniform spanning tree with each edge oriented towards
the sink.
The key insight is that the resulting tree does not depend on the order
of the cycles chosen (\cite[Lemma~4.2]{lyons2017probability}). Our proof will rely on an extension of this algorithm to an infinite graph setting.


The reader should recall the definition of a graph factor of i.i.d.\
from \cref{sec:coding_def}.

\begin{thm}\label{USFfactor}
   Let $(G, \rho)$ be a connected, transient, random rooted graph and
conditioned on it let  $T$ be a  WUSF on $G$. Then $(G, \rho, T)$ is a
graph factor of i.i.d.
\end{thm}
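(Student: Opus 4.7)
The plan is to adapt Wilson's cycle-popping algorithm \cite[Section~4.1]{lyons2017probability} to the infinite transient setting, replacing the usual induction on number of pops by a transfinite induction. Using the i.i.d.\ marks on $(G,\rho)$, construct at each vertex $v$ an infinite stack $S_v=(S_v^{(1)},S_v^{(2)},\dots)$ of i.i.d.\ uniform random neighbors of $v$; this step is clearly a graph factor. For a stack-position function $p:V\to \{1,2,\dots\}$, let $D(p)$ be the directed graph whose out-edge at $v$ is $(v,S_v^{(p(v))})$, call a directed cycle in $D(p)$ \emph{exposed}, and let popping such a cycle $C$ mean incrementing $p$ by $1$ on every vertex of $C$.

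Define a sequence $(p_\alpha)$ indexed by countable ordinals by $p_0\equiv 1$; at a successor $\alpha+1$, pop a measurably chosen exposed cycle of $D(p_\alpha)$ if one exists (for instance the cycle whose sorted list of vertex marks is lexicographically smallest), and otherwise declare the procedure complete; at a limit ordinal, set $p_\alpha(v)=\sup_{\beta<\alpha}p_\beta(v)$. Since the set of popped (vertex, position) pairs is contained in the countable set $V\times\{1,2,\dots\}$, the procedure stabilizes at some countable ordinal $\alpha^*$. I would then verify: (a) $p^*:=p_{\alpha^*}$ is almost surely finite at every vertex and $D(p^*)$ has no exposed cycles; (b) $p^*$, and hence the forest $F^*:=\{(v,S_v^{(p^*(v))})\}_{v\in V}$, is independent of the measurable choices made along the way; and (c) $F^*$ has the distribution of the wired uniform spanning forest on $(G,\rho)$.

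For (b) I would mimic the finite-graph confluence argument \cite[Lemma~4.2]{lyons2017probability} through ordinal induction: disjoint exposed cycles do not interfere with each other's exposed status, so any two valid popping sequences can be interleaved into a common refinement, now with bookkeeping across limit ordinals. For (c) I would run Wilson's algorithm rooted at infinity in parallel, using the same stacks to drive the random walks: the walk from $v$ is obtained by following current top-of-stack arrows, loop-erasure corresponds to cycle popping, and transience guarantees the walk either hits the previously constructed forest or escapes to infinity in finite time. This identifies $F^*$ with Wilson's output, which is by definition the WUSF, and also yields (a) as a byproduct since Wilson's algorithm uses each stack at each vertex only finitely often.

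The graph-factor property is then immediate: the stacks are a graph factor of the i.i.d.\ marks, and the cycle-popping procedure is a deterministic function of the stacks defined without any reference to the root. The main obstacle I anticipate is the rigorous transfinite analogue of Wilson's confluence lemma, in particular ensuring that passing to limits in $p_\alpha$ does not break the exchangeability of pops, and controlling $p_\alpha(v)$ to remain finite at stabilization. As noted, the latter reduces to the statement that only finitely many cycles ever involve a fixed vertex, which is exactly where transience enters and is encapsulated by the comparison with Wilson's algorithm rooted at infinity.
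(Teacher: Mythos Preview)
Your proposal is correct and follows essentially the same approach as the paper: both construct i.i.d.\ stacks of arrows, run a transfinite cycle-popping procedure, establish confluence via an infinite analogue of Wilson's lemma, and identify the output with the WUSF by comparison with Wilson's algorithm rooted at infinity (which is where transience is used). The only difference is organizational---the paper indexes the popping by an $\omega^2$-type ``legal order'' (a sequence $\cW=(W_1,W_2,\dots)$ of vertex sequences) and separates existence of a legal order (Claim~1, via Wilson) from confluence among legal orders (Claim~2), whereas you use a general ordinal-indexed sequence with a specific measurable cycle-selection rule and derive finiteness of $p^*$ and confluence together.
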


\begin{proof}
Throughout, we condition on $(G, \rho)$.
The core idea is to extend the cycle popping algorithm of Wilson to the
infinite setting.
For each vertex $x$, let $S_x = (S_x^1,S_x^2,\dots)$ be a sequence of
oriented edges, where each $S_x^i$ is uniformly and independently chosen
from all edges emanating from $x$.
(There is no `sink' vertex and the root $\rho$ plays no special role,
the role of the sink vertex is played by infinity.
Indeed, rooting at infinity is required to get a graph factor in the end.)
We refer to $S_x$ as the \textbf{stack} at $x$ (thinking of $S_x^1$ as
the top of the stack), to the $S_x^i$ as \textbf{arrows}, and to $i$ as the
\textbf{color} of the arrow $S_x^i$.
We will construct $(G, \rho, T)$ as a graph factor of $(G, \rho, \Xi)$,
where $\Xi = (S_x)_{x \in V}$ are i.i.d.\ variables.

The construction proceeds as follows.
First, reveal the top arrow at every vertex.
This creates a directed graph with oriented paths and cycles, and
potentially infinite or bi-infinite oriented paths.

As in the finite case, we repeatedly choose a finite cycle (if one
exists) and pop it --- throw away the top arrow at each stack along the
cycle so as to expose the next arrows there.
In contrast to the finite setting, the top arrows may (and a.s.\ will)
create infinitely many finite cycles, so we will need to be careful with
how we choose which cycles to pop.
We will keep track of the cycles popped at any given moment, including
the information of the colors of the arrows in the cycle (we refer to
these as \textbf{colored cycles}).
After a certain number of poppings, the collection of topmost arrows in
the stacks are called \textbf{exposed arrows}.
Note that popping a cycle increments the colors of the exposed arrows at
the vertices of the cycles.

We now explain how to pop the cycles in such a way so that after all the
popping has finished, we are left with exposed arrows which describe the
WUSF as a factor of $\Xi$.
In an infinite graph, it is possible that after infinitely many cycles
are popped, the exposed arrows still include cycles, so a transfinite
version of the algorithm is used.
Let $\cW := (W_1,W_2,\dots)$, where each $W_i$ is a (finite or
infinite) sequence of vertices $W_i = (v_{i,j})_{1 \le j \le \ell_i}$.
Roughly speaking, we go over the vertices in $\cW$ one at a time
according to the order on the indices $(i,j)$ induced by the
lexicographical order.
vertices, where the ordering is described by lexicographic ordering in
the indices: $v_{ij} \preceq v_{i'j'} $ if $i \le  i'$ or if $i = i'$,
$j \le j'$. Observe that $\cW$ also defines an order of popping loops.
Namely,
When we arrive at a vertex $v=v_{i,j}$, we check whether it is included
in a finite cycle defined by the currently exposed arrows (such a cycle
is unique if it exists); if so, we pop it; otherwise, we do nothing.
Formally, we inductively define $c_{i,j}(x) \in \N$ (the color of the
exposed arrow at $x$ at stage $(i,j)$) as follows. Set $c_{1,1}(x) := 1$.
If we have defined $c_{i,j}=(c_{i,j}(x))_{x \in V}$ for some $(i,j)$,
then we define $c_{i,j+1}(x) := c_{i,j}(x) +\1_{x \in L_{i,j}}(x)$,
where $L_{i,j}$ is the finite cycle passing through $v_{i,j}$ in the
arrow configuration $(S^{c_{i,j}(x)}_x)_x$ (where $L_{i,j}=\emptyset$ if
$j>\ell_i$ or if no such cycle exists).
When moving to the next sequence $W_{i+1}$, define $c_{i+1,1} := \lim_{j
\to \infty} c_{i,j}$.
Finally, define $c_\infty := \lim_{i \to \infty} c_{i,1}$.
Note that $\{ L_{i,j} \}$ is the set of cycles popped, that
$c_\infty(x)$ is the final color of the arrow at $x$, and that
$\sigma=(S_x^{c_\infty(x)})_x$ is the final arrow configuration.

We say $\cW $ is a \textbf{legal order} with respect to a collection of
stacks $S$ if the following hold for every $v\in V$:
\begin{itemize}
\item $c_\infty(v)$ is finite (only finitely many loops passing through
$v$ are popped),
\item There are infinitely many $i$ such that $v$ appears in $W_i$.
\end{itemize}
The motivation behind the above definition is as follows: the first item
guarantees that the popping stabilizes (and that the above construction
is well defined);
The second will ensure that every cycle that can be popped is indeed popped.
We now make two claims:
\begin{description}
\item[Claim 1] Almost surely, there exists a legal order.
\item[Claim 2] For any two legal orders, the set of colored cycles
popped the same.
   In particular, any two legal orders produce the same final arrow
configuration $\sigma$.
\end{description}

Claim 1 is a consequence of the fact that Wilson's algorithm rooted at
infinity almost surely stabilizes.
Let $(u_1,u_2, \dots)$ be some ordering of the vertices of $G$.
Stacks of arrows can be used to generate random walks on $G$:
At each step the walker uses the first unused arrow at its current location.
Consider the resulting random walk from $u_1$.
Each loop created by this walk corresponds to a cycle that can be popped.
We define the sequence $W_1$ to be the of ordered sequence of vertices at which the walk closes
a loop (with multiplicities). 
Thus each vertex of $W_1$ will cause a cycle to be popped.
After popping these cycles in order, the loop erased walk consists of an
infinite directed path of exposed arrow starting at $u_1$.
We define $W_2$ to be that path, and note that visiting the vertices of
$W_2$ will not pop any cycles.
(This is still needed for the second legality condition.)

Having defined $W_1$ and $W_2$, we consider the walk from $u_2$ stopped
when it hits the path from $u_1$ (or to infinity if it does not hit the
path).
The vertices where this walk closes a loop form $W_3$.
After popping these cycles, the union of the two loop erased walks is a
forest, and all vertices of this forest are entered in $W_4$ in an
arbitrary order.
In general, the loops closed by the walk from $u_k$ will be noted in
$W_{2k-1}$, and the forest spanned by $u_1,\dots,u_k$ will be included
in $W_{2k}$. It is clear that this sequence of vertices describe Wilson's algorithm
and hence almost surely the first item in the definition of legal order
is satisfied since Wilson's algorithm stabilizes by transience. Since we
always scan all the vertices in the forest in $W_{2i+1}$,
each vertex $v$ appears in infinitely many odd indexed $W_i$s.

We now proceed Claim 2.
The proof method is similar to that in the finite case, see e.g.
\cite[Lemma 4.2]{lyons2017probability}, with some extra care to deal
with the infinite graph.
We provide it for completeness.
Consider two legal orders $W$ and $W'$
The idea is to use transfinite induction to show that every colored
cycle $C'$ that is popped when following $W'$ is also popped at some
point when following $W$.
For the base case, consider the very first cycle that is popped when
following $W'$.
All arrows of the colored cycle $C'$ must have color 1.
Consider the first cycle popped in $W$ that intersects $C'$.
Call that cycle $C$.
Since no previous popped cycle intersects $C'$, when $C$ is popped in
includes a vertex $v\in C'$ which still has color 1.
When $C$ is popped, all vertices of $C'$ have color 1, and so if $C$
includes any of them it must equal $C'$ (as a colored cycle!).

The inductive step is similar. The key observation is the following.
Suppose a vertex $v_{i,j}$ causes a colored cycle to be popped.
If we remove $v_{i,j}$ from the order, and remove the arrows of the
cycle from the stacks, then the resulting set of colored cycles popped
will not see any other change.
That is, applying the modified sequence to the modified stacks will see the
same cycles being popped.
The same holds if any number of such terms are dropped from $W$ and the
arrows of the corresponding cycles are removed from the stacks.

Consider now some colored cycle $C'$ that is popped in $W'$.
Let $\hat W'$ be the order $W'$ without all the vertices that lead to a
popped cycle strictly before $C'$.
Let $\hat S$ be the stacks without all the arrows involved in those
earlier cycles.
Note that these are all locally finite, so are well defined.
By the induction hypotheses, all earlier colored cycles are also popped
in $W$, so let $\hat W$ be the order $W$ without the vertices leading to
these cycles being popped there.
Now $C'$ is the first cycle popped in $\hat W'$ with the stacks $\hat S$.
As in the base case, $C'$ is also popped in the order $\hat W$ on $\hat S$.
By the above observation, $C'$ is also popped in $W$ acting on $S$.

We have shown that all colored cycles popped in $W'$ are also popped in
$W$, and by symmetry the converse also holds, proving Claim 2.


\medskip

Given these two claims, the graph factor map can now be easily described.
Given the stacks, choose any legal order and compute the final
configuration $\sigma$ of exposed arrows that it produces.
This configuration does not depend on the chosen legal order by Claim 2,
and hence this yields a graph factor.
Wilson's algorithm ensures that the unoriented arrows defined by
$\sigma$ has the law of the WUSF \cite[Proposition
10.1]{lyons2017probability}.
\end{proof}

\begin{remark}
	For our application to the loop $O(1)$ model, we use \cref{USFfactor} together with the known fact that the components of the WUSF are one-ended almost surely~\cite{BLPS_USF,AL_unimodular,H15a}. In fact, for our purposes, we could use \emph{any} forest (instead of the WUSF) which can be obtained as a graph factor of i.i.d.\ and whose components are infinite one-ended trees (see \cref{prop:tree_construction}).
   In this context, we mention that another natural candidate for the desired forest is
the free or wired \emph{minimal spanning forest}, which has a natural
description as a factor of i.i.d.
   These forests are defined as follows.
   Attach an independent Uniform$[0,1]$ random variable $U(e)$ to each
edge  $e$ in the graph.
   Declare an edge $e$ to be present in the free minimal spanning forest
if there is no cycle in $G$ such that $U(e)>U(e')$ for all $e' \neq e$
in the cycle.
   For the wired minimal forest, we additionally require that there is
no bi-infinite path containing $e$ such that $U(e) >U(e')$ for all $e'$
in the bi-infinite path.
   It is known that the wired minimal spanning forest has one-ended
components on a unimodular, extremal random rooted graph if there is no
infinite cluster at criticality \cite[Theorem 7.4]{AL_unimodular}.
   We do not know how to prove that there is no infinite cluster at
criticality in our setting (notably for the supercritical FK-Ising
model, except if $p=1$ and $G$ is nonamenable and quasi-transitive).
   We mention that one route which is sufficient is to prove that the
FK-Ising cluster is invariantly non-amenable as then we can use
\cite[Theorem 8.11]{AL_unimodular}.
   However this problem is believed to be hard, recently this was solved
for Bernoulli bond percolation \cite{hermon2021supercritical}.
   We also mention that it is not known whether the free minimal
spanning forest is connected in general (see \cite{lyons2006minimal} for
a discussion).
\end{remark}

\section{Loop $O(1)$ as a factor of i.i.d.}\label{sec:loop_factor}

In this section, we prove \cref{thm:main_wired} (wired case) and \cref{thm:main_free} (free case). 

We record here a result about obtaining an end faithful spanning tree as a factor of i.i.d.\ in unimodular amenable graphs.
\begin{thm}[\cite{benjamini1999group,AL_unimodular,Timar_one_ended}] \label{thm:timar}
Let $(G, \rho)$ be a unimodular random rooted graph which is invariantly amenable with finite expected degree of $\rho$. There exists an end faithful spanning tree of $(G, \rho)$ which can be obtained as a graph factor of i.i.d.
\end{thm}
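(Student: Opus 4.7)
The plan is to combine invariant amenability with a minimum-spanning-tree construction driven by the i.i.d.\ labels. First, using the Aldous--Lyons characterization of invariant amenability, I would produce an \emph{invariant Følner exhaustion} as a factor of i.i.d.: a nested sequence of invariant edge percolations $\omega_1 \subseteq \omega_2 \subseteq \cdots$ on $(G,\rho)$ with $\bigcup_n \omega_n = E(G)$, every cluster of each $\omega_n$ almost surely finite, and $\E[i_{\sf E}(V(\omega_{n,\rho}))] \to 0$, where $\omega_{n,\rho}$ is the cluster of $\rho$. Existence of an invariant percolation achieving any prescribed value of $i_{\sf E}$ is exactly the definition of invariant amenability (see \eqref{eq:inv_edge_boundary}); nesting and factor-of-i.i.d.\ realization can then be arranged by a standard i.i.d.-driven cluster-selection procedure in the spirit of \cite{AL_unimodular}, using independent copies of auxiliary uniform labels at each scale.

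Next, attaching independent Uniform$[0,1]$ labels $\{U(e)\}$ to the edges, within each finite cluster $C$ of $\omega_n$ I would take the minimum-weight spanning tree $T_n(C)$ with respect to these labels, and set $F_n := \bigcup_C T_n(C)$, a spanning forest of $G$ with finite components. To pass from $F_n$ to $F_{n+1}$: for each cluster $C'$ of $\omega_{n+1}$ that is the union of clusters $C_1,\dots,C_m$ of $\omega_n$, one canonically selects $m-1$ \emph{bridge edges} from $\omega_{n+1}\setminus\omega_n$ (by iterating Kruskal on the $U$-weights restricted to those edges) that merge $T_n(C_1),\dots,T_n(C_m)$ into a spanning tree $T_{n+1}(C')$. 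This makes the $F_n$ nested, and since $\bigcup_n \omega_n = E(G)$ and $G$ is connected, any two adjacent vertices of $G$ eventually lie in a common cluster, so $T := \bigcup_n F_n$ is a spanning tree of $G$. All steps are local and canonical in the labels, so $(G,\rho,T)$ is a graph factor of i.i.d.

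For end faithfulness, recall first that by \cite[Theorem 8.9]{AL_unimodular} the graph $G$ has at most two ends almost surely. The map from ends of $T$ to ends of $G$ is always surjective, so it suffices to control the number of ends of $T$. If two disjoint rays of $T$ lay in the same end of $G$, then $G$ would contain infinitely many pairwise disjoint short paths between them, but the only edges of $T$ that can connect different $\omega_n$-clusters are the bridge edges. The mass transport principle combined with $\E[i_{\sf E}(V(\omega_{n,\rho}))]\to 0$ forces the expected number of bridge edges at the root across scales to be controllable, and a Borel--Cantelli argument along the Følner exhaustion then rules out additional ends. In the two-ended case one must additionally ensure that exactly two ends survive; this is done by refining the bridge selection so that each merging contributes exactly one bridge in each of the two ``long directions'' of the cluster, which is possible because the Følner clusters are asymptotically ``thin'' along these directions.

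The main obstacle is the end-faithfulness step: producing \emph{some} factor-of-i.i.d.\ spanning tree on a unimodular invariantly amenable graph is comparatively routine once an invariant Følner exhaustion is available, but matching the end structure of $T$ to that of $G$ requires the delicate bridge-counting argument sketched above, which is the content of \cite{Timar_one_ended}. The one-ended case (the generic situation under invariant amenability, by \cite[Theorem 8.9]{AL_unimodular}) is the cleanest; the two-ended case needs the extra care described to avoid creating a spurious third end via the bridge edges.
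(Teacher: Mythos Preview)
Your proposal attempts a self-contained construction, whereas the paper's proof is a short citation-based case split: by \cite[Theorem 8.9]{AL_unimodular} (extending \cite[Theorem 5.3]{benjamini1999group}) there is a factor-of-i.i.d.\ spanning tree with at most two ends, hence $G$ itself has at most two ends; if $G$ is two-ended that tree is automatically two-ended (the end map from a spanning tree is surjective) and so end faithful; if $G$ is one-ended, invoke Tim\'ar's theorem \cite{Timar_one_ended} directly to get a one-ended factor-of-i.i.d.\ spanning tree. No construction is carried out in the paper.

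Your sketch has genuine gaps. First, the nested factor-of-i.i.d.\ F{\o}lner exhaustion is asserted, not established: the definition of invariant amenability only supplies \emph{invariant} percolations with small $i_{\sf E}$, not factor-of-i.i.d.\ ones, and nesting is a further constraint; turning this into a factor requires real work (roughly the content of \cite[Section 8]{AL_unimodular}), not a ``standard cluster-selection procedure''. Second, the end-faithfulness step is the heart of the matter and your Borel--Cantelli outline does not go through as written: $\E[i_{\sf E}(V(\omega_{n,\rho}))]\to 0$ along a sequence gives no summability, so Borel--Cantelli does not apply; obtaining the needed quantitative control at every scale is precisely what \cite{Timar_one_ended} accomplishes, and you end up deferring to it anyway. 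Third, your treatment of the two-ended case is confused: any spanning tree of a two-ended graph already has at least two ends by surjectivity of the end map, so the only issue is ruling out a third end, which is the \emph{same} problem as in the one-ended case---no separate ``one bridge in each long direction'' mechanism is needed, and it is unclear what that would mean. In short, once you concede that the end-control is ``the content of \cite{Timar_one_ended}'', your argument reduces to the paper's: cite \cite{AL_unimodular} for $\le 2$ ends and \cite{Timar_one_ended} for the one-ended refinement.
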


\begin{proof}
It is known that if $(G, \rho)$ is invariantly amenable then there exists a spanning tree with at most two ends which can be obtained as a graph factor of i.i.d.\ This is proved in \cite[Theorem 8.9]{AL_unimodular} which extends \cite[Theorem 5.3]{benjamini1999group} (it is not mentioned in the results that the tree can be constructed as a factor, but is implicit in their proofs\footnote{This is also mentioned in a remark in Tim\'{a}r's paper \cite{Timar_one_ended}.}). This shows that $(G, \rho)$ is also at most two ended. On the event that $(G, \rho)$ is two ended, it is clear that the spanning tree obtained as a factor must be two ended. On the event that $(G, \rho)$ is one ended, the theorem follows from the main result of Tim\'{a}r~\cite{Timar_one_ended}. 
\end{proof}

\begin{proposition}\label{prop:tree_construction}
  Let $(G, \rho)$ be a unimodular random rooted graph with finite expected degree of~$\rho$ and let $\omega$ be an invariant percolation on it which almost surely has no two-ended components. Conditionally on $(G, \rho, \omega)$, let $\Xi:=(\Xi_v)_{v \in V(G)}$ be an i.i.d.\ collection of Uniform$[0,1]$ random variables.
  Then there exists a graph factor map $\varphi_2$ with
  \[(G, \rho, (\omega, T)) = \varphi_2(G, \rho, (\omega, \Xi)),\]
  such that $T$ almost surely satisfies the following: $T$ restricted to a finite cluster of $\omega$ is a spanning tree of that cluster; $T$ restricted to an infinite cluster of $\omega$ is a spanning forest of that cluster consisting of infinite one-ended trees. 
\end{proposition}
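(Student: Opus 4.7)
I would prove the proposition by building $T$ one $\omega$-component at a time, using the randomness in $\Xi$ (split dyadically into two independent Uniform$[0,1]$ fields $\Xi^{(1)},\Xi^{(2)}$ if need be).

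First consider a finite cluster of $\omega$: finite clusters are locally detectable, and on each one any canonical $\Xi$-driven procedure produces a spanning tree as a graph factor --- for example, Wilson's algorithm rooted at the vertex of minimum $\Xi^{(1)}$-label in the cluster, or a minimum weight spanning tree with respect to a symmetric function of the endpoint labels. Identifying the cluster and running such a procedure is isomorphism-equivariant, so this piece of the construction is a graph factor of $(G,\rho,(\omega,\Xi))$.

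Now consider an infinite cluster. By the root-cluster statement of Lemma~\ref{unimod_perc}, on the (re-rooting invariant) event $|\omega_\rho|=\infty$ the rooted cluster $(\omega_\rho,\rho)$ is itself a unimodular random rooted graph with finite expected root degree. I would then split into transient and recurrent sub-cases, both of which are invariant under re-rooting and hence amenable to the tolerant-condition statement of Lemma~\ref{unimod_perc}. In the transient sub-case, \cref{USFfactor} produces the WUSF on $(\omega_\rho,\rho)$ as a graph factor of i.i.d.\ driven by $\Xi^{(1)}$, and Hutchcroft's one-endedness theorem \cite{H15a} (whose hypotheses hold here because $\omega$ has no two-ended components, ruling out the only obstruction) guarantees every tree of the WUSF is one-ended almost surely. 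In the recurrent sub-case, recurrence together with unimodularity and finite expected degree forces invariant amenability via a unimodular Kesten-type criterion (\cite[Theorem~8.13]{AL_unimodular}), and combined with \cite[Proposition~8.7]{AL_unimodular} and the no-two-ended-components hypothesis this forces $\omega_\rho$ to be one-ended; \cref{thm:timar} then supplies an end-faithful spanning tree as a graph factor, driven by $\Xi^{(2)}$, which is one-ended since $\omega_\rho$ is.

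Assembling the spanning trees or one-ended spanning forests produced on every cluster yields the desired $T$, and since each step is isomorphism-equivariant the composite defines the required graph factor map $\varphi_2$. The main technical point I expect is realizing the transient/recurrent split as a single measurable graph factor rather than a case analysis on a partition of the probability space: this works because transience of $(\omega_\rho,\rho)$ is a tail property of the random walk and is therefore determined by the isomorphism class of the marked cluster, so the two conditional constructions glue into one measurable map. A secondary subtlety is confirming that the appeals to \cref{USFfactor}, \cite{H15a} and \cref{thm:timar} apply legitimately to the cluster rather than to the ambient graph $G$, which is justified precisely by the unimodularity of $(\omega_\rho,\rho)$ inherited from Lemma~\ref{unimod_perc}.
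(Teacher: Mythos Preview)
Your proposal is correct and follows essentially the same route as the paper: reduce to the root cluster via \cref{unimod_perc}\ref{root_cluster}, handle finite clusters by any equivariant spanning-tree construction, and on infinite clusters split into a transient branch (WUSF via \cref{USFfactor}, one-endedness via \cite{H15a}) and a recurrent branch (one-endedness plus \cref{thm:timar}), then glue the pieces into a single graph factor.

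One point where your write-up is actually cleaner than the paper's: to see that an infinite recurrent root cluster is one-ended, the paper appeals to the dichotomy in \cref{unimod_perc}\ref{rec_one_ended}, but that lemma as stated requires insertion and deletion tolerance of $\omega$, which is not part of the hypotheses of \cref{prop:tree_construction}. Your argument---recurrence plus unimodularity and finite expected degree force invariant amenability, hence at most two ends by \cite[Theorem~8.9]{AL_unimodular}, hence one end by the no-two-ended-components hypothesis---bypasses this and is the correct justification at the stated level of generality.
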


(The reason for the notation $\varphi_2$ will become clear later.)

\begin{proof}
Let $\omega_v$ be the edge cluster of a vertex $v$ in $\omega$ and let $\Xi_v$ be $\Xi$ restricted to $\omega_v$. Recall from \cref{unimod_properties} \cref{root_cluster} that $(\omega_\rho, \rho)$ is unimodular.
 We claim that there is a graph factor $\tilde \varphi_2$ such that 
 $
 \tilde \varphi_2(\omega_\rho, \rho , \Xi_\rho) = (\omega_\rho, \rho , T_\rho),
 $
where $T_\rho$ satisfies the criteria of the proposition: if $\omega_\rho$ is finite, $T_\rho$ is a.s.\ a spanning tree; if $\omega_\rho$ is infinite, $T_\rho$ is a.s.\ a spanning forest whose every component is infinite and one ended. Note that given $\tilde \varphi_2$, we can define  $\varphi_2$ by applying $\tilde \varphi_2$ separately to each vertex component (precisely because $\tilde \varphi_2$ is a graph factor and does not depend on the location of the root). We can thus conclude that $\varphi_2$ produces a $T$ as desired.

 Let us now define $\tilde \varphi_2$. If $\omega_\rho$ is finite, then it is a standard fact that we can sample a uniform spanning tree as a graph factor, call this map $\psi$.
Let $\cR$ be the event that every infinite cluster is recurrent and one ended. If $\cR$ has positive probability, $(G, \rho, \omega)$ conditioned on $\cR$ is a unimodular random rooted graph in which every infinite cluster is one ended and recurrent (\cref{unimod_properties}, \cref{reroot}), and the required factor map was constructed by Tim\'{a}r (\cref{thm:timar}); call it $\psi'$. Since $\omega$ does not have any two-ended components by assumption, $\cR^c$ is the event that every infinite cluster is transient (here we use implicitly that it almost never happens that some infinite clusters are recurrent and some are transient by \cref{unimod_properties} \cref{rec_one_ended}).
So now we can use \cref{USFfactor} to obtain the required factor map, call it $\psi''$, which has the desired one-endedness property (see
BLPS \cite[Theorem 10.1]{BLPS_USF} for Cayley graphs, Aldous-Lyons for
unimodular, bounded degree graphs \cite[Theorem
7.2]{AL_unimodular}, and Hutchcroft  \cite[Theorem 1]{H15a} for the most
general result).
Finally we define $\tilde \varphi_2((g,x,m))$ to be equal to $\psi$ if $(g,x)$ is finite, $\psi'$ if $(g,x)$ is recurrent and one ended, and $\psi''$ if $(g,x)$ is transient. This completes the construction of $\tilde \varphi_2$ and finishes the proof.
\end{proof}

%

\begin{proof}[Proof of \cref{thm:main_wired} (Wired case.)] We now turn to the proof of \cref{thm:main_wired}, i.e., the goal now is to describe a graph factor map $\varphi$ such that $$\varphi((G, \rho, \Xi))  = (G, \rho, \eta) $$ where $(G, \rho, \Xi)$ is an i.i.d.\ marked random rooted graph and given $(G, \rho)$, $\eta \sim \bP^{\w}_{G,x,y}$. Let $\Xi = (\Xi^{(1)},\Xi^{(2)},\Xi^{(3)})$ to be three i.i.d.\ collections. 
 Fix $\beta, h$ such that $x = \tanh(\beta), y = \tanh(h)$ and fix $p = 1-e^{-2\beta} $ and  $p_h = 1-e^{-2h}$ as prescribed by \cref{prop:coupling}.
 
 \medskip
 
We first handle the case $x \in [0, 1)$. Let $\omega^{\w} \sim \phi^{\w}_{G,p,p_h}$. Using \cref{thm:HS}, we know that $\omega^{\w} \sim \phi^{\w}_{G,p,p_h}$ is a graph factor of i.i.d., so that there is a graph factor map $\varphi_1$ such that $$ \varphi_1 ((G, \rho, \Xi^{(1)})) =(G, \rho, \omega^{\w}).$$
 Since $x<1$, we have that $\omega^{\w}$ is deletion tolerant, and we can thus conclude using \cref{unimod_properties} \cref{rec_one_ended}  that none of the components of $\omega^{\w}$ is two-ended almost surely.
Hence \cref{prop:tree_construction} allows us  to define a graph factor $\varphi_2$ such that
 $$
 \varphi_2((G, \rho,(\omega^{\w}, \Xi^{(2)}) )) = (G, \rho, (\omega^{\w}, T)),
 $$
 where $T$ almost surely satisfies the following properties. The restriction of $T$ to each infinite component of $\omega^{\w}$ is a spanning forest whose every component is infinite and one ended. The restriction of $T$ to each finite component is a spanning tree of that component.
Finally, we define a graph factor map $\varphi_3$ such that
 $$
 \varphi_3((G, \rho, (\omega^{\w}, T, \Xi^{(3)}))) = (G, \rho, \eta).
 $$
 This is obtained by extracting from $T$ a locally finite generating set for the wired cycle space of $\omega^{\w}$  using \cref{lem:gen_set_forest} and then defining $\eta$ as in \eqref{eq:U} (taking the i.i.d.\ variables from $\Xi^{(3)}$ and noting crucially that the elements in the generating set are indexed by edges of $G$).
By composing $\varphi_1, \varphi_2, \varphi_3$ in an obvious manner, we obtain the graph factor map $\varphi$. The fact that $(G, \rho, \eta)$ has the required distribution is a consequence of \cref{prop:loop_a.s._conv,prop:limit_projection}. 
\medskip

Now we turn to the case $x=1$. In this case, the edge set of $\omega^{\w}$ coincides with that of $G$. Thus the case when $(G, \rho)$ is a.s.\ not two ended is already handled by the previous argument (here we only need to compose the maps $\varphi_2, \varphi_3$). Now suppose that $(G, \rho)$ is two ended with positive probability and condition on it. Using \cref{thm:timar}, we obtain an end faithful two-ended spanning tree $T$ of $(G, \rho)$ as a graph factor of i.i.d. Now if $y>0$, using \cref{lem:tree_end_dense}, $T^*$ is end dense almost surely. Hence we can use the second item of \cref{lem:gen_set_forest} to obtain a locally finite generating set for the wired cycle space as a factor. The last remaining case is when $y=0$ and $(G, \rho)$ is two ended with positive probability, in which case we claim that $\eta$ is not a graph factor of i.i.d. The argument for this is independent of everything else and is proved in the following proposition.



\begin{proposition}\label{prop:two_ended}
Let $(G, \rho)$ be a unimodular random rooted graph which is two ended with positive probability. Then the wired uniform even subgraph of $(G, \rho)$  is \textbf{not} a graph factor of i.i.d.
\end{proposition}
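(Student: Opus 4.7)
The plan is to derive a contradiction by assuming $(G,\rho,\eta)$ is a graph factor of i.i.d., reducing to an ergodic setting, and then exhibiting a re-rooting invariant event of probability $\tfrac12$ measurable in $\eta$.

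First I would reduce to the ergodic case. The event $\{G\text{ is two-ended}\}$ is measurable in $(G,\rho)$, invariant to re-rooting, and has positive probability by hypothesis, so by \cref{unimod_properties} \cref{reroot} conditioning on it preserves unimodularity. The graph factor of i.i.d.\ structure is preserved under this conditioning since the factor map is a deterministic function and the i.i.d.\ marking $\Xi$ remains i.i.d.\ conditional on $(G,\rho)$. Passing further to an ergodic component of positive mass, I may assume $(G,\rho)$ is ergodic and a.s.\ two-ended, hence a.s.\ infinite, so \cref{lem:ergodic} implies that $(G,\rho,\Xi)$ is ergodic and therefore so is the factor $(G,\rho,\eta)$.

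Next I would construct the invariant event. Since $G$ is a.s.\ two-ended, there exists a finite edge cut $F\subset E(G)$ whose removal disconnects $G$ into two infinite components; define $f(\omega):=|\omega\cap F|\bmod 2$. Using \cref{halin}, pick an end-faithful spanning tree $T$ of $G$; since $G$ is two-ended and $T$ is end-faithful, $T$ is two-ended, so it contains a unique bi-infinite axis $P$ with $T\setminus P$ consisting of finite branches. Arguing as in \cref{lem:gen_set_forest}, using that the only nonempty even subgraph of a two-ended tree is its axis (a leaf-removal argument), the collection $\{C_e^T\}_{e\in E(G)\setminus T}\cup\{P\}$ is a locally finite generating set of $\cE^{\w}$. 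Applying \cref{prop:limit_projection} with this generating set yields a representation $\eta=\ve_P P\oplus\sum_{e\notin T}\ve_e C_e^T$ with $(\ve_P,\{\ve_e\})$ i.i.d.\ Bernoulli$(1/2)$ conditional on $G$. Every finite cycle crosses $F$ an even number of times (a closed walk returns to its starting side of $F$), so $f(C_e^T)=0$; meanwhile $P$, whose two tails lie in distinct ends of $G$, crosses $F$ an odd number of times, so $f(P)=1$. Hence $f(\eta)=\ve_P$ is Bernoulli$(1/2)$, and the event $A:=\{f(\eta)=0\}$ has probability $\tfrac12$.

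Finally, $A$ coincides with the intrinsic event $\{\eta\in\cE^{\f}\}$: indeed, $f$ is continuous (depending on only finitely many coordinates) and vanishes on $\cE^{\f}$ by linearity and the parity of crossings, while the above representation shows that the quotient $\cE^{\w}/\cE^{\f}$ is the two-element group $\{[0],[P]\}$, so $f(\eta)=0$ precisely when $\eta\in\cE^{\f}$. Consequently $A$ depends only on $(G,\eta)$ and is preserved by all graph automorphisms, so it is Borel measurable and re-rooting invariant. This contradicts the ergodicity of $(G,\rho,\eta)$ established above, completing the proof. The main delicate point I anticipate is isolating the correct Bernoulli$(1/2)$ degree of freedom in $\eta$: this requires both the explicit generating set coming from an end-faithful two-ended spanning tree (so that the coefficient on the axis $P$ is genuinely an independent Bernoulli variable) and the parity test $f$ separating $P$ from $\cE^{\f}$ (so that this coefficient is recoverable from $\eta$ in a re-rooting invariant way).
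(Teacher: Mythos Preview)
Your proof is correct and follows essentially the same approach as the paper: reduce to the ergodic two-ended case, and exhibit the re-rooting invariant Bernoulli$(1/2)$ quantity given by the parity of $\eta$ across a finite end-separator, contradicting ergodicity of any graph factor of i.i.d.

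The only differences are in how two sub-steps are justified. To show the parity is Bernoulli$(1/2)$, you invoke Halin's theorem and \cref{prop:limit_projection} to extract an explicit generating set $\{C_e^T\}\cup\{P\}$ and read off the coefficient $\ve_P$; the paper instead observes directly that $U\mapsto U\oplus P$ is measure-preserving and flips the parity. To show the event is independent of the chosen cut $F$, you identify it with the intrinsic event $\{\eta\in\cE^{\f}\}$; the paper instead decomposes $U$ into edge-disjoint finite cycles and bi-infinite paths and checks each has the same crossing parity for any minimal end-separator. Both routes are valid; the paper's are slightly more self-contained, while yours makes more explicit the structural fact that $\cE^{\w}/\cE^{\f}\cong\Z_2$ in the two-ended case.
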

\begin{proof}
We may assume without loss of generality that $(G,\rho)$ is ergodic, so that in particular it is almost surely two ended.
Let $U$ be the wired uniform even subgraph of $(G, \rho)$. Using \cref{lem:ergodic}, it suffices to show that $(G,\rho,U)$ is not ergodic. 

We define a non-constant random variable $X$ which is invariant to re-rooting.
This contradicts the ergodicity of $(G,\rho)$.
Call a set of edges an \text{end-separator} if removing it creates two infinite components.
A minimal end-separator is such a set which is minimal with respect to inclusion.
In this case, $G \setminus F$ is disconnected and has exactly two connected components. 
Let $F$ be a minimal end-separator.
Define $X$ to be $|U \cap F| \mod 2$.
Let us show that $X$ does not depend on the choice of $F$.
Let $F'$ be another minimal end-separator.
Now, $U$ can be written as an edge-disjoint union of finite cycles and bi-infinite paths, so it suffices to show that any finite cycle or bi-infinite path $C$ satisfies $|C \cap F| \equiv |C \cap F'| \pmod 2$.
It is not hard to check that $|C \cap F| \equiv 1 \pmod 2$ if and only if the intersection of $C$ with each of the two connected components of $G \setminus F$ is infinite. Since the latter clearly does not depend on the end-separator $F$ in a two-ended graph, we conclude that $X$ is well defined and invariant to re-rooting.

It remains to show that $X$ is a non-constant random variable.
Let $P$ be a bi-infinite path in $G$, whose ends corresponding to the two ends of $G$. Note that $U \mapsto U \Delta P$ is measure preserving (i.e., $U \Delta P$ has the same distribution as $U$) and that it maps $X$ to $1-X$.
Thus, $X$ is a Bernoulli$(1/2)$ random variable.
\end{proof}

This completes the proof of \cref{thm:main_wired}.
\end{proof}

\smallskip
\begin{proof}[Proof of \cref{thm:main_free} (Free case.)]
We will use the notations of the factor maps used in the wired case.
  Let us first sample $\omega^{\f} \sim \phi^{\f}_{G,p,p_h}$ as a graph factor of i.i.d.\ by applying \cref{thm:HS} (analogous to $\varphi_1$ as in the wired case).

  \paragraph{Case (a)}
  If $y>0$ then we know by \cref{lem:wired=free,lem:loopO1end_dense} that each infinite cluster is end-dense and hence the wired and free uniform even subgraphs have the same law.
  Thus, the same arguments as in the wired case yield that $\bP^{\f}_{G,x,y}$ is a factor of i.i.d.\ (with the same maps $\varphi_2, \varphi_3$).

  \paragraph{Case (b)}
  We may assume that $y=0$ (otherwise refer to Case (a)).
  If $(G, \rho)$ is invariantly amenable, then we can use \cref{unimod_properties} to say that $(\omega^{\f}_\rho, \rho)$ is also invariantly amenable.
  We can then apply \cref{thm:timar} to find an end faithful spanning tree as a graph factor of i.i.d.\
  We can apply \cref{lem:gen_set_free} to get a locally finite generating set for $\cE^{\f}$.
  Taking each element with probability 1/2 yields the desired factor.


  \paragraph{Case (c)}
  Suppose now that $\omega^{\f}$ does not have infinitely many geodesic cycles through any vertex almost surely.
  Let $\cC$ be the collection of all geodesic cycles in $(G,\rho)$, which by assumption is locally finite.
  We claim that $\cC$ generates the free cycle space $\cE^{\f}$.
  Indeed, suppose some cycle $C\in\cE^{\f}$ is not in the span of $\cC$.
  Let $C$ be such a cycle of minimal length.
  Since $C$ is not itself geodesic, some pair of vertices in $C$ are connected by a shorter path.
  This allows us to write $C$ as a sum of two shorter cycles, which must be in the span of $\cC$, contradicting the assumption that such $C$ exists. This shows that $\cC$ generates the free cycle space $\cE^{\f}$.
  Finally, taking each element with probability 1/2 yields the desired factor.


  \paragraph{Map case}
  If $M$ is planar map, we can take the collection of all finite degree faces of $\omega^{\f}$ as a locally finite generating set (this replaces the step of finding such a generating set using spanning trees in the wired case). Indeed, it is a standard fact that every cycle in a planar map is the sum of all the faces in the finite component enclosed by it.
  As before, we can assign a random parity to each cycle as a factor of i.i.d.\ because they are finite.

\medskip
This finishes the proof of \cref{thm:main_free}. 
\end{proof}

\begin{remark}
If $G$ is amenable and vertex transitive, then it is a result of Raoufi \cite{raoufi2020translation} that the free and wired Loop $O(1)$ measures coincide away from criticality (notably for low temperatures). Thus, in this case, we can alternatively conclude that the free Loop $O(1)$ is a graph factor of i.i.d.\ using \cref{thm:main_wired}.
\end{remark}

\begin{remark}\label{rmk:nonunimodular}
  If $(G, \rho)$ is vertex transitive and nonunimodular, then the steps where the proof of \cref{thm:main_wired} fails are as follows. If the FK-Ising clusters are transient, then we do not know if the WUSF is one-ended (see \cite{lyons2008ends} for a condition of uniform transience which ensures this). In the recurrent case, we would need a version of Tim\'ar's result (we believe this case is vacuous). We point out that planar, one-ended, quasi-transitive graphs are always unimodular (see \cite[Theorem 8.25]{lyons2017probability}).

  In the free case, the arguments for planar maps and for graphs with finitely many geodesic cycles through each $v$ do not require the unimodularity condition, and the result holds.
\end{remark}


\section{Perspectives and open questions}\label{sec:open}

\begin{question}\label{qn:recurrentUST}
Suppose $(G, \rho)$ is a unimodular random rooted graph which is recurrent almost surely. Show that the wired uniform spanning tree is a graph factor of i.i.d. 
\end{question}

This is open even in the case of $\Z^2$.
We believe that for $\Z^2$, the result of Lis and Duminil--Copin~\cite{duminil2019double} on double random currents and its connection with dimers could be a potential approach. However a more robust approach is more desirable, so we do not pursue that in this article.

\medskip

We now turn to the condition involving geodesic cycles in \cref{thm:main_free}. One situation where it is satisfied is if a sample from supercritical FK-Ising satisfies a weak version of Gromov hyperbolicity which we now define. Given three vertices $x,y,z$, let $T_{xyz}$ be the union of $\gamma_{xy}, \gamma_{yz}$ and $\gamma_{zx}$, where $\gamma_{uv}$ is a geodesic joining $u$ and $v$. This is called a geodesic triangle. Such a triangle is $\delta$-thin if the distance between any vertex in $\gamma_{yz}$ and $\gamma_{xy} \cup \gamma_{xz}$ is at most $\delta$, and the same is true for the other permutations of $\{x,y,z\}$.  A graph is anchored Gromov hyperbolic if for every $v$ there exists a $\delta$ (which may depend upon $v$) such that any geodesic triangle through $v$ is $\delta$-thin. It is straighforward to see that if $\omega$ is almost surely anchored Gromov hyperbolic, then the last condition in \cref{thm:main_free} is satisfied: indeed an arbitrarily long geodesic cycle can be used to create a geodesic triangle which is not $\delta$-thin for arbitrarily large $\delta$. 
It is not unreasonable to believe that if $G$ is Gromov hyperbolic, then an FK-Ising sample for a supercritical value of $p$ is anchored Gromov hyperbolic, but we do not pursue this in this article.

\begin{question}\label{qn:geodesic}
  Suppose $p_c(G)<p<1$.
  Could $\phi^{\f}_{G,p,0}$ include infinitely many geodesic cycles through the root? Here $p_c(G)$ is the critical probability for FK-Ising in $G$.
\end{question}

As discussed above, answering the following question will settle \cref{qn:geodesic} in the Gromov hyperbolic setting.

\begin{question}\label{qn:anchored}
  Suppose $G$ is Gromov hyperbolic. Show that for $p>p_c$, almost surely every infinite cluster of $\phi^{\f}_{G,p,0} $ is anchored Gromov hyperbolic.
\end{question}

Our results rely on a connection to FK-Ising which is only known for $x,y\in [0,1]$. This raises the following question:

\begin{question}
Suppose $\max\{x,y\}>1$. Are the free or wired Loop $O(1)$ measures well defined on infinite graphs, and if so are they graph factors of i.i.d.\ ?
\end{question}
In some cases simple duality relations can yield a positive answer to the above question.
If all vertices of $G$ have even degree then complement of the open edges yield a Loop $O(1)$ model with parameters $1/x,y$, so we can apply our theorem in this case if $x>1, y\in [0,1]$. If all the vertices have odd degree, then the complement of the open vertices and open edges yield a Loop $O(1)$ model with parameter $1/x,1/y$. Thus we can apply our theorem if $x \ge 1, y \ge 1$.

\bibliographystyle{amsplain}
\bibliography{loop}

\end{document}